\newtheorem{thm}{Theorem}[section]
 \numberwithin{equation}{section} %% Comment out for sequentially-numbered
 \numberwithin{figure}{section} %% Comment out for sequentially-numbered
 \theoremstyle{plain}
 \theoremstyle{plain}    
 \newtheorem{cor}[thm]{Corollary} %%Delete [thm] to re-start numbering
 \theoremstyle{plain}    
 \newtheorem{prop}[thm]{Proposition} %%Delete [thm] to re-start numbering
\newtheorem{defi}[thm]{Definition}
 \newtheorem{lem}[thm]{Lemma} %%Delete [thm] to re-start numbering
 \newtheorem{rem}[thm]{Remark}
\newtheorem{ex}[thm]{Example}
\newcommand{\N}{\mathbb{N}}
\newcommand{\R}{\mathbb{R}}
\newcommand{\C}{\mathbb{C}}
\newcommand{\Cc}{\mathcal{C}}
\newcommand{\f}{\varphi}
\newcommand{\p}{\psi}
\newcommand{\vep}{\varepsilon}
\newcommand{\Ec}{\mathcal{E}}
\newcommand{\EcX}{\mathcal{E}(X,\omega)}
\newcommand{\Hc}{\mathcal{H}}
\newcommand{\Sc}{\mathcal{S}}
\newcommand{\ind}{{\bf 1}}
\newcommand{\setdef}{\ \big\vert \ }
\newcommand{\Capa}{{\rm Cap}}
\newcommand{\Capo}{{\rm Cap}_{\omega}}
\newcommand{\Capis}{{\rm Cap}_{\psi}}
\newcommand{\Capfp}{{\rm Cap}_{\f,\psi}}
\newcommand{\Capip}{{\rm Cap}_{\varphi,\psi}}
\newcommand{\Capipj}{{\rm Cap}_{\varphi,\psi_j}}
\newcommand{\MA}{\mathrm{MA}\,}
\newcommand{\vol}{{\rm Vol}}
\newcommand{\psh}{{\rm PSH}}
\newcommand{\pshXo}{{\rm PSH}(X,\omega)}
\begin{document}
\title{Generalized Monge-Amp\`ere Capacities} 
\date{\today\\
The authors are partially supported by the french ANR project MACK} 
\author{E. Di Nezza} 

\address{Institut Math\'ematiques de Toulouse \\
Universit{\'e} Paul Sabatier\\ 31062 Toulouse\\ France}

\email{eleonora.dinezza@math.univ-toulouse.fr}

\author{Chinh H. Lu}
\address{Chalmers University of Technology \\ Mathematical Sciences\\
412 96 Gothenburg\\ Sweden}

\email{chinh@chalmers.se}

\begin{abstract}
We study various capacities on compact K\"{a}hler manifolds which generalize
the Bedford-Taylor Monge-Amp\`ere capacity. 
We then use these capacities to study the existence and the regularity of solutions of complex Monge-Amp\`ere equations.
\end{abstract}

\maketitle
\tableofcontents

\section{Introduction}
Let $(X,\omega)$ be a compact K\"{a}hler manifold of complex 
dimension $n$ and let $D$ be an arbitrary divisor on $X$. 
Consider the complex Monge-Amp\`ere equation 
\begin{equation}\label{eq: intro 1}
 (\omega+dd^c \varphi)^n=f\omega^n, 
\end{equation}
where $0\leq f\in L^1(X)$ is such that $\int_X f\omega^n=\int_X \omega^n$. 
It follows from \cite{GZ07} and 
\cite{Di09} that equation (\ref{eq: intro 1})  has a unique normalized solution in the finite energy class $\Ec(X,\omega)$.
We say that the solution $\f$ is normalized if $\sup_X \f=0$. 

If $f$ is strictly positive and smooth on $X$, we know from the seminal paper of Yau \cite{Y78} that the solution is also smooth on  $X$. 
Recall that this solves in particular the Calabi conjecture and allows to construct Ricci flat metrics on $X$ whenever $c_1(X)=0$.

Given $f$ positive and  smooth on $X\setminus D$, it is natural to investigate the regularity of the solution. In \cite{DL1} 
we have proved in many cases that the solution $\f$ is smooth 
in $X\setminus D$.

As in the classical case of Yau \cite{Y78}, the most difficult step is to establish an a priori $\Cc^0$-estimate. 
This estimate is much more difficult in our situation since in general the solution is not globally bounded. 
A natural idea is to bound the normalized solution from below by a singular quasi plurisubharmonic function (qpsh for short). 
This is where generalized Monge-Amp\`ere capacities play a crucial role.

We recall the notion of the classical capacity 
$\Capa_{\omega}$ introduced and studied in \cite{Kol03} and \cite{GZ05}: 
$$
\Capa_{\omega}(E)= \sup\left\{\int_E (\omega+dd^c u)^n\ \setdef \ u\in \psh(X,\omega), \ -1\leq u\leq 0\right\}, \ E\subset X.
$$ 

A strong comparison between the Lebesgue measure and $\Capa_{\omega}$, as is needed in a celebrated method due to Ko{\l}odziej \cite{Kol98}, 
does not hold in our setting. We therefore study other capacities to provide an a priori $\Cc^0$-estimate. In dealing with complex 
Monge-Amp\`ere equations in quasiprojective varieties we were naturally lead to work with generalized capacities of type 
$\Capa_{\psi-1, \psi}$ in \cite{DL1} (see below for their definition).

\medskip

In this paper, we make a systematic study of these capacities as well as the more general $\Capa_{\f, \psi}$ capacities: 
let $\f,\p$ be two $\omega$-plurisubharmonic functions on $X$ such that $\f<\p$ on $X$ modulo possibly a pluripolar set.  
The $(\f,\p)$-Capacity of a Borel subset $E\subset X$ is defined by
\begin{equation*}
\Capa_{\f,\p} (E):=\sup\left\{\int_E (\omega+dd^c u)^n\  \setdef \ 
u\in \psh(X,\omega),\ \f\leq u\leq \p \right \}.
\end{equation*}
Here, for a $\omega$-psh function $u$,  
$(\omega+dd^c u)^n$ is the non-pluripolar Monge-Amp\`ere measure 
of $u$. See Section 2 for the definition.
 When $\f\equiv \p-1$, we drop the index $\f$ and denote the 
$(\p-1,\p)$-Capacity by $\Capis$,
$$
\Capis: = \Capa_{\p-1,\p}.
$$
This is exactly the generalized 
capacity used in our previous paper \cite{DL1}.  If moreover $\p$ is constant, $\p\equiv C$, 
we recover the Monge-Amp\`ere capacity defined above
$$
\Capa_{C} = \Capo.
$$

Given any subset $E\subset X,$ we define  the outer $(\f,\p)$-capacity of  $E$ by
\begin{equation*}
\Capa_{\f,\p}^*(E):=\inf \left\{ \Capa_{\f,\p}(U) \setdef U\ \text{is an open subset of } X,\  E\subset U\right\}.
\end{equation*}
We say that the $(\f,\p)$-capacity \textit{characterizes pluripolar sets} on $X$ if for any subset $E\subset X$, the
following holds
$$
\Capfp^*(E)=0\Longleftrightarrow \ \text{E is a pluripolar subset of}\ X.
$$ 
If $E\subset X$ is a Borel subset we set
\begin{equation*}
h_{\f,\p,E}(x):=\sup\left\{ u(x) \setdef u\in \psh(X,\omega), u\leq \p\ {\rm on}\ X,\ u\leq \f \; \text{q.e.}\ E\right\}.
\end{equation*}
Here, quasi everywhere (q.e. for short) means outside a pluripolar set. 
Let $h_{\f,\p,E}^*$ be its upper semicontinuous regularization which we call the $(\f,\p)$-extremal function of $E$. 
We establish a useful characterization of the $(\f,\p)$-capacity in terms of the relative extremal function for any subset.
\medskip

When $\f$ belong to the finite energy class $\EcX$ we can bound $\Capfp$ by $F(\Capo)$ for some positive function $F$
 which vanishes at $0$. This uniform control turns out to be very useful in studying convergence of the complex Monge-Amp\`ere operator 
 since it allows us to replace quasi-continuous functions by continuous ones without affecting the final result. 
We also prove that  the generalized Monge-Amp\`ere capacity $\Capfp$ characterizes pluripolar sets when the lower  weight is in  $\EcX$:

\medbreak

\noindent {\bf Theorem A.} {\it Assume that $\f\in \EcX$ and 
$\p\in \psh(X,\omega)$ such that $\f<\p$ modulo a pluripolar subset. }
\begin{itemize}
\item[(i)]{\it Let $E\subset X$ be a Borel subset of $X$, and denote by $h_E$ the $(\f,\p)$-extremal function of $E$. 
The  outer $(\f,\p)$-capacity of $E$ is given by}
\begin{equation*}
\Capfp^*(E)=\int_{\{h_E<\f\}}\MA(h_E)=\int_X \left( \frac{-h_E+\p}{-\f+\p}\right) \MA(h_E),
\end{equation*}
where $h_E:=h^*_{\f,\p,E}$ is the $(\f,\p)$-extremal function of $E$.
\item[(ii)] {\it There exists a function $F:\R^+\to \R^+$ such that $\lim_{t\to 0^+}F(t)=0$ and such that for all Borel subset $E$,}
$$
\Capfp(E) \leq F(\Capo(E)). 
$$
\item[(iii)]{\it $\Capfp$ characterizes pluripolar sets.}
\end{itemize}
\medbreak

\noindent We  stress that the function $F$ in $(ii)$ is quite explicit (see Theorem \ref{thm: comparison of Capacity}).

\medskip

As we have  underlined, these generalized capacities play an important role in studying
complex Monge-Amp\`ere equations on quasi-projective varieties (see \cite{DL1}). 
We give in the second part of this paper several other applications. 
\medbreak

We consider the following complex Monge-Amp\`ere equation
\begin{equation}\label{eq: MAeq lambda}
(\omega+dd^c \f)^n =e^{\lambda \f} f\omega^n, \ \lambda \in \R.
\end{equation}
Assume that $0<f\in \Cc^{\infty}(X\setminus D)$ satisfies Condition 
$\Hc_f$, i.e. $f$ can be written as
$$
f=e^{\p^+-\p^-}, \ \ \p^{\pm} \ {\rm are \ quasi \ psh
 \ functions\ on }\ X\ , \ \p^-\in L^{\infty}_{\rm loc}(X\setminus D).
$$

When $\lambda=0$ and $f$ satisfies $\int_X f\omega^n=\int_X \omega^n$, 
we proved in  \cite{DL1} that there is a unique normalized solution 
in $\EcX$ which is smooth on $X\setminus D$. When $\lambda>0$ and 
$\int_X f\omega^n<+\infty$ the same result holds since the $\Cc^0$ estimate follows easily from the comparison principle.

Consider now  the case when $\lambda<0$. In this case solutions do not always exist and 
when they do, there may be many of them. Our result here says that
any solution in $\EcX$ ({\it if exists}) is smooth on $X\setminus D$.

\medskip

\noindent{\bf Theorem B.} {\it Let $0<f\in \Cc^{\infty}(X\setminus D) \cap  L^1(X)$. Assume that $f$ satisfies Condition $\Hc_f$ and $\f\in \EcX$ is a solution of 
$$  
(\omega+dd^c \f)^n =e^{\lambda \f} f\omega^n, \ \lambda <0 .
$$
Then  $\f$ is smooth on $X\setminus D$.} 

 \medskip
 
Note that when $\lambda<0$ and equation (\ref{eq: MAeq lambda}) has a solution in $\EcX$, the measure 
$\mu=f\omega^n$ is dominated by $\MA(u)$ for some $u\in \psh(X,\omega)
\cap L^{\infty}(X)$. In particular, $f\in L^1(X)$.

\medskip

We next investigate the case when $\lambda>0$ and $f$ is not integrable on $X$. Of course solutions do not always exist. But observe that when $\f$ is
 singular enough $e^{\f}f$ will be integrable on $X$ and it is then reasonable to find  a solution. For example, one can look at densities of the type
$$
f\simeq \frac{1}{\vert s\vert^2},
$$
which is not integrable. Here $s$ is a holomorphic section of the line bundle associated to $D$. Such densities have been considered by Berman and Guenancia in their study of the compactification of the moduli space of canonically polarized manifolds \cite{BG13}. They have shown that there exists a unique solution $\f\in \EcX$ which is smooth in $X\setminus D$.
As another application of the generalized Monge-Amp\`ere capacities 
we show in the following result that in a general context whenever a solution in $\EcX$ exists it is smooth outside $D$.

\medskip

\noindent{\bf Theorem C.} {\it Assume $0<f\in \Cc^{\infty}(X\setminus D)$ satisfies Condition $\Hc_f$. If the equation 
$$ (\omega+dd^c \f)^n =e^{\lambda \f} f\omega^n, \ \lambda >0$$
admits a solution $\f\in \EcX$ then $\f$ is smooth on 
$X\setminus D$.}  
 
\smallskip
\noindent Let us stress  that in Theorem C we do not assume that 
$\int_X f\omega^n<+\infty$. It turns out that the existence of solutions in 
$\EcX$ is equivalent to the existence of subsolutions in this class, these are easy to construct in concrete situations (see Example \ref{ex: subsol}). We also obtain a similar result in the case of semipositive and big classes (see Theorem \ref{thm: thmC semi} and Example \ref{ex: subsol semi}). 

\medskip

Finally we use generalized capacitites to study the critical integrability of a given $\phi\in \psh(X,\omega)$.

\smallskip

\noindent{\bf Theorem D.} {\it Let $\phi\in \psh(X,\omega)$ and $\alpha=\alpha(\phi)\in (0,+\infty)$ be the canonical threshold of $\phi$, i.e. 
$$
\alpha=\alpha(\phi):= \sup\{t>0 \ \setdef \ e^{-t\phi} \in L^1(X)\}.
$$
Then there exists $u\in \psh(X,\omega)$ with zero Lelong number at all points such that $e^{u-\alpha \phi}$ is integrable. Moreover, there exists a unique $\f\in \EcX$ such that 
$$
(\omega +dd^c \f)^n = e^{\f-\alpha \phi} \omega^n.
$$
}
It turns out that one can even chose $u=\chi\circ \phi$ in $\Ec(X,\omega)$, as an explicit function of $\phi$ with attenuated singularities (see Theorem \ref{thm: critical integrability}).
\medskip

The paper is organized as follows. In section 2 we recall some known facts on energy classes, we introduce 
generalized capacities on compact K\"{a}hler manifolds and prove Theorem A. As an application of the generalized
 capacities we give another proof of the domination principle in 
 $\EcX$ in Section 3. In Section 4 we use generalized 
 capacities to study complex 
Monge-Amp\`ere equations as (\ref{eq: MAeq lambda}). The proof of Theorem D will be given in Section 4 as well.
\bigskip

\noindent {\bf Acknowledgements.} We would like to thank 
Vincent Guedj and Ahmed Zeriahi for constant help, many suggestions and encouragements. 
We  also thank Robert Berman and  Bo Berndtsson for useful discussions.
We are indebted to Henri Guenancia for a careful reading and very useful 
comments on a previous draft version of this paper.
 
\section{Generalized Monge-Amp\`ere Capacities}\label{sect: basic properties}
Let $(X,\omega)$ be a compact K\"{a}hler manifold of complex dimension $n$. In this section we prove some basic properties of the 
$(\f,\psi)$-capacity and of the relative 
$(\f,\psi)$-extremal functions. 
 
\subsection{Energy classes}
\begin{defi}
We let $\psh(X,\omega)$ denote the class of $\omega$-plurisubharmonic functions ($\omega$-psh for short) on $X$, i.e. the class of functions 
$\f$ such that locally  $\f= \rho+ u$, where $\rho$ is a local potential of $\omega$ and $u$ is a plurisubharmonic function.
\end{defi}

Let $\f$ be some unbounded $\omega$-psh function on $X$ 
and consider $\f_j:=\max(\f, -j)$ the "canonical approximants". It has been shown in \cite{GZ07} that 
$$
\ind_{\{\f_j >-j \}} (\omega+dd^c \f_j)^n
$$ 
is a non-decreasing sequence of Borel measures. We denote its limit by
$$
\MA(\f) = (\omega+dd^c \f)^n := \lim_{j\to +\infty} \ind_{\{\f_j >-j \}} (\omega+dd^c \f_j)^n.
$$

\begin{defi} 
We denote by $\Ec(X,\omega)$ the set of $\omega$-psh  functions having full Monge-Amp{\`e}re mass:
$$
\EcX:= \left\{ \f\in \psh(X,\omega)\ \setdef \ \int_X \MA(\f) =\int_X \omega^n\right\}.
$$
\end{defi}

Let us stress that $\omega$-psh functions with full Monge-Amp{\`e}re mass have mild singularities.
In particular, any $\f\in\Ec(X,\omega)$ has zero Lelong numbers $\nu(\f, \cdot)=0$ (see \cite[Corollary 1.8]{GZ07}). We also recall that, for 
every $\f\in\Ec(X,\omega)$ and any $\p\in \psh(X,\omega)$, the \emph{generalized comparison principle} is valid, 
namely 
$$ 
\int_{\{\f<\psi\}} (\omega+dd^c  \psi)^n \leq \int_{\{\f<\psi\}} (\omega+dd^c \f)^n.
$$
\begin{defi} 
Let $\chi:\R^-\to\R^-$ be an increa\-sing function such 
that $\chi(0)=0$ and $\chi(-\infty)=-\infty$. 
We denote by $\Ec_\chi(X,\omega)$ the class of $\omega$-psh functions having 
finite $\chi$-energy:
$$
\Ec_{\chi}(X,\omega):= \left\{\f\in \Ec(X,\omega) \;\,  | \;\, \chi(-|\f|)\in L^1(\MA(\f)) \right\}.
$$ 
\end{defi} 

For $p>0$, we  use the notation
$$
{\Ec}^p(X,\omega):=\Ec_{\chi}(X,\omega),
\text{ when } \chi(t)=-(-t)^p.
$$

\subsection{The $(\f,\p)$-Capacity}
In this subsection we always assume that 
 $\f,\p\in \psh(X,\omega)$ are such that   $\f<\p$ quasi everywhere 
 on $X$. The $(\f,\psi)$-capacity of a Borel subset $E\subset X$ is defined by
$$
\Capfp(E):= \sup\left\{ \int_E \MA (u) \ \setdef \ u\in \psh(X, \omega) , \ \f\leq u\leq \p\right\}.
$$
When $\f\equiv \p-1$, to simplify the notation we simply denote 
$$
\Capis:= \Capa_{\p-1,\p}.
$$
If moreover $\p\equiv C$ is constant we recover the Monge-Amp\`ere capacity introduced in \cite{BT82}, \cite{Kol03}, \cite{GZ05}.
The following properties of the $(\f,\psi)$-Capacity follow straightforward from the definition.
\begin{prop}
(i) If $E_1\subset E_2\subset X$ then $\Capfp(E_1)\leq \Capfp(E_2)$ .

(ii) If $E_1, E_2, \cdots$ are Borel subsets of $X$ then 
$$
\Capfp\left(\bigcup_{j=1}^{\infty} E_j\right)\leq \sum_{j=1}^{+\infty} \Capfp (E_j) .
$$

(iii) If $E_1\subset E_2\subset \cdots$ are Borel subsets of $X$ then
$$
\Capfp \left(\bigcup_{j=1}^{\infty} E_j\right) = \lim_{j\to+\infty} \Capfp (E_j) .
$$
\end{prop}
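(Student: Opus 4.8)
The plan is to read off all three properties directly from the definition of $\Capfp$ as a supremum of the positive Borel measures $\MA(u)$ over the family $\{u\in\psh(X,\omega)\setdef \f\le u\le\p\}$, using only monotonicity of the integral of a positive measure and the continuity of a positive measure along monotone sequences of sets. No quantitative input beyond the fact (recalled in Section 2) that each $\MA(u)$ is a well-defined positive Borel measure of total mass at most $\int_X\omega^n$ is needed.

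For (i), fix any competitor $u$ with $\f\le u\le\p$. Since $\MA(u)$ is a positive measure and $E_1\subset E_2$, we have $\int_{E_1}\MA(u)\le\int_{E_2}\MA(u)\le\Capfp(E_2)$; taking the supremum over all such $u$ gives $\Capfp(E_1)\le\Capfp(E_2)$. For (ii), fix again a competitor $u$; countable subadditivity of the positive measure $\MA(u)$ yields $\int_{\bigcup_j E_j}\MA(u)\le\sum_j\int_{E_j}\MA(u)\le\sum_j\Capfp(E_j)$, and the supremum over $u$ finishes the proof.

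For (iii), set $E:=\bigcup_j E_j$. The bound $\lim_{j\to+\infty}\Capfp(E_j)\le\Capfp(E)$ is immediate from (i): the sequence $\Capfp(E_j)$ is nondecreasing and bounded above by $\Capfp(E)$, hence convergent. For the reverse inequality, fix $\vep>0$ and choose a competitor $u$ with $\int_E\MA(u)\ge\Capfp(E)-\vep$. Because $\MA(u)$ is a positive measure and $E_j\uparrow E$, continuity from below gives $\int_{E_j}\MA(u)\to\int_E\MA(u)$, so there is $j_0$ with $\int_{E_{j_0}}\MA(u)\ge\Capfp(E)-2\vep$; hence $\Capfp(E_{j_0})\ge\Capfp(E)-2\vep$ and a fortiori $\lim_{j\to+\infty}\Capfp(E_j)\ge\Capfp(E)-2\vep$. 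Letting $\vep\to0$ concludes.

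There is essentially no serious obstacle. The only point deserving a word of care is the interchange, in part (iii), of the supremum over competitors $u$ with the limit over $j$: this is not done via any uniform control in $u$ but by the standard ``$\vep$ of room'' argument above, in which $u$ is fixed \emph{before} passing to the limit, so that the convergence used is merely continuity of a single positive measure along the increasing sequence $E_j\uparrow E$.
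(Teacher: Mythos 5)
Your argument is correct and is precisely the straightforward verification the paper has in mind: the paper offers no written proof, stating only that these properties ``follow straightforward from the definition,'' and your use of monotonicity, countable subadditivity, and continuity from below of each positive measure $\MA(u)$, together with the $\vep$-of-room argument fixing $u$ before passing to the limit in (iii), is exactly that intended verification.
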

The \textit{outer $(\f,\p)$-capacity} of  $E$ is defined by
\begin{equation*}\label{eq: outer capi def}
\Capa_{\f,\psi}^*(E):=\inf \left\{ \Capfp(U) \setdef U\ \text{is an open subset of } X,\  E\subset U\right\}.
\end{equation*}
We say that the $(\f,\psi)$-capacity \textit{characterizes pluripolar sets} on $X$ if for any subset $E\subset X$, the
following holds
$$
\Capfp^*(E)=0\Longleftrightarrow \ \text{E is a pluripolar subset of}\ X.
$$ 

\begin{defi}
If $E\subset X$ is a Borel subset we set
\begin{equation*}
h_{\f,\p,E}:=\sup\left\{ u\in \psh(X,\omega),\ u\leq \f \ \text{quasi\ everywhere\ on}\ E, u\leq \p \ \text{on}\ X\right\},
\end{equation*}
where "quasi everywhere" means outside a pluripolar set. 
The upper semicontinuous regularization of $h_{\f,\p,E}$ is called the relative $(\f,\p)$-extremal function of $E$.
\end{defi}

\begin{prop}\label{prop: properties} Let $E\subset X$.

\begin {itemize}
\item[(i)] The function $h_{\f,\p,E}^*$ is $\omega$-psh. It satisfies $\f \leq h_{\f,\p, E}^* \leq \p$ on $X$
and $h_{\f,\p, E}^*=\f$ quasi everywhere on $E.$
\item[(ii)] If $P \subset E$ is pluripolar, then 
$h_{\f,\p, E \setminus P}^* \equiv h_{\f,\p,E}^*$; in particular $h_{\f,\psi,P}^* \equiv \p$.
\item[(iii)] If $(E_j)$ are  subsets of $X$ increasing towards $E \subset X$, then $h_{\f,\p, E_j}^*$ decreases towards 
$h_{\f,\p, E}^*$.
\item[(iv)] If $h^*_{\f,\psi,E}\equiv \p$ then $E$ is pluripolar.
\end{itemize}
\end{prop}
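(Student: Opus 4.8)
\emph{Proof strategy.} The plan is to handle the four items in order; the only genuinely delicate point is the passage from an uncountable supremum to a countable one in (i), after which (ii)--(iv) are bookkeeping with pluripolar exceptional sets.

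For (i), first note that the competing family
$$
\mathcal F_E := \left\{ u\in\pshXo \setdef u\le\p \ \text{on}\ X,\ u\le\f\ \text{q.e. on}\ E \right\}
$$
is nonempty --- it contains $\f$, since $\f<\p$ q.e. on $X$ forces $\f\le\p$ on all of $X$ (a pluripolar set being Lebesgue-negligible, this is the usual principle for comparing potentials) --- and it is bounded above by $\p$. Hence $\f\le h_{\f,\p,E}\le\p$ pointwise, and since $\p$ is upper semicontinuous, $h_{\f,\p,E}^*\le\p$ as well. By Choquet's lemma there is a countable subfamily $(u_j)\subset\mathcal F_E$ with $(\sup_j u_j)^*=h_{\f,\p,E}^*$; replacing $u_j$ by $\max(\f,u_1,\dots,u_j)$, which still lies in $\mathcal F_E$ (a countable union of pluripolar sets is pluripolar, and $\mathcal F_E$ is stable under finite maxima), we may assume the sequence is nondecreasing and $\ge\f$. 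Then $h_{\f,\p,E}^*$ is $\omega$-psh, and by the fact that negligible sets of pluripotential theory are pluripolar one has $h_{\f,\p,E}^*=\sup_j u_j$ quasi everywhere on $X$. Since $\sup_j u_j\le\f$ q.e. on $E$ and $\f\le h_{\f,\p,E}\le h_{\f,\p,E}^*$ everywhere, we conclude $h_{\f,\p,E}^*=\f$ q.e. on $E$.

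For (ii), enlarging a set only adds constraints, so $\mathcal F_E\subset\mathcal F_{E\setminus P}$ and $h_{\f,\p,E}^*\le h_{\f,\p,E\setminus P}^*$; conversely any $u\in\mathcal F_{E\setminus P}$ satisfies $u\le\f$ q.e. on $E$ too, since $P$ is pluripolar, so $\mathcal F_{E\setminus P}\subset\mathcal F_E$ and the two envelopes coincide. Taking $P=E$ pluripolar gives $\mathcal F_E=\mathcal F_\emptyset=\{u\in\pshXo : u\le\p\}$, whose envelope is $\p$ (the largest $\omega$-psh function bounded above by $\p$), so $h_{\f,\p,P}^*\equiv\p$. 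For (iii), the same monotonicity shows $h_{\f,\p,E_j}^*$ is nonincreasing with limit $h\ge h_{\f,\p,E}^*$, and $h$ is $\omega$-psh (a decreasing limit of $\omega$-psh functions, bounded below by $\f$) and upper semicontinuous, with $\f\le h\le\p$. By (i), $h\le h_{\f,\p,E_k}^*=\f$ q.e. on each $E_k$, hence $h\le\f$ q.e. on $\bigcup_k E_k=E$; thus $h\in\mathcal F_E$, so $h\le h_{\f,\p,E}^*$ and equality holds. Finally, for (iv), if $h_{\f,\p,E}^*\equiv\p$ then (i) gives $\f=\p$ q.e. on $E$, which together with the standing hypothesis $\f<\p$ q.e. on $X$ confines $E$ to a union of two pluripolar sets; hence $E$ is pluripolar.

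The step I expect to be the crux is item (i): the supremum defining $h_{\f,\p,E}$ runs over an uncountable family, and each competitor obeys $u\le\f$ only off a pluripolar set depending on $u$, so a priori the pointwise supremum need not be $\le\f$ anywhere on $E$. Choquet's lemma reduces to a countable subfamily, and the fact that negligible sets are pluripolar identifies $h_{\f,\p,E}^*$ with $\sup_j u_j$ off a pluripolar set, which is exactly what forces $h_{\f,\p,E}^*=\f$ q.e. on $E$; everything else is routine manipulation of pluripolar exceptional sets.
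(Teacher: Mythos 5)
Your proof is correct and follows essentially the same route as the paper: item (i) is the standard Choquet/Bedford--Taylor argument (which the paper simply cites as standard), and (ii)--(iii) are argued identically via monotonicity of the competing families and the stability of pluripolar sets under countable unions. The only organizational difference is in (iv), where you deduce the conclusion from (i) together with the standing hypothesis that $\f<\p$ quasi everywhere on $X$, whereas the paper reruns the Choquet argument and invokes directly that negligible sets are pluripolar; the two are equivalent, since your proof of (i) rests on that same fact.
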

\begin{proof}
The statement $(i)$ is a standard consequence of Bedford-Taylor's work \cite{BT82}. Set $E_1:=E\setminus P$, and denote by 
$h=h^*_{\f,\p, E},\  h_1=h^*_{\f,\p, E_1}$ the corresponding $(\f,\p)$-extremal
functions of $E, E_1$. Since $E_1\subset E$ it is clear that $h_1\geq h$. 
On the other hand $h_1=\f$ quasi everywhere on $E_1$ hence on $E$. This yields $h_1\leq h$ whence equality.

Let us prove $(iii)$. Since $(E_j)$ is increasing, $h_j:=h^*_{\f,\psi,E_j}$ is decreasing toward $h\in \psh(X,\omega).$ 
It is clear that $h\geq h^*_{\f,\psi,E}$. By definition, for each $j\in \N$, $h_j=\f$  quasi everywhere on  $E_j$.
It then follows that  $h=\f$ quasi everywhere on $E$. We then infer that $h\leq h^*_{\f,\psi,E}$, hence the equality.

To prove $(iv)$ assume that $h^*_{\f,\p,E}\equiv\psi$. By definition of $h:=h^*_{\f,\p,E}$ and by Choquet's
 lemma we can find an increasing sequence $(u_j)$ such that $u_j=\f$ on $E$ and $\left(\lim_{j\rightarrow +\infty} u_j\right)^*=h$. Note that 
$$
E\subset \left\{\left(\limsup_{j\rightarrow +\infty} u_j\right)<\left(\limsup_{j\rightarrow +\infty} u_j\right)^*\right\} ,
$$ 
modulo a pluripolar set. The latter is also pluripolar, hence $E$ is  pluripolar.
\end{proof}

\smallskip

\begin{thm}\label{thm: pluripolar}
If $\f\in \EcX$ and $E\subset X$ is pluripolar then $\Capfp^*(E)=0$.
\end{thm}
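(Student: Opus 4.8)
The plan is to produce a decreasing family of open neighbourhoods of $E$ on which $\Capfp$ tends to $0$. Since $E$ is pluripolar and $(X,\omega)$ is compact K\"ahler, there is $\rho\in\psh(X,\omega)$ with $\rho\le 0$ and $E\subset\{\rho=-\infty\}$; put $U_j:=\{\rho<-j\}$. Each $U_j$ is open and contains $E$, and $\bigcap_jU_j$ is pluripolar, so by the very definition of the outer capacity it suffices to prove $\Capfp(U_j)\to 0$.

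The mechanism I would use is a comparison with the relative extremal functions $h_j:=h^*_{\f,\p,U_j}$. Since $(U_j)$ decreases, $(h_j)$ increases to some $h\in\psh(X,\omega)$; by Proposition \ref{prop: properties}(i) one has $\f\le h_j\le\p$ with $h_j=\f$ q.e.\ on $U_j$, and since $h_j\ge\f\in\EcX$ also $h_j,h\in\EcX$. Using Proposition \ref{prop: properties}(i)--(ii) one sees that $h\equiv\p$ (the constraint ``$v\le\f$ q.e.\ on $U_j$'' becomes vacuous over the pluripolar set $\bigcap_jU_j$). For any competitor $u$ (i.e. $\f\le u\le\p$), the function $v:=\max(u,h_j)$ is again a competitor and coincides with $u$ on the open set $U_j$ (because $h_j=\f\le u$ q.e.\ there, hence $h_j\le u$ there), so $\int_{U_j}\MA(u)=\int_{U_j}\MA(v)$. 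Now $v\ge h_j\in\EcX$: on $\{v>h_j\}$ the generalized comparison principle gives $\int_{\{v>h_j\}}\MA(v)\le\int_{\{v>h_j\}}\MA(h_j)$, on $\{v=h_j\}$ the non-pluripolar Monge-Amp\`ere measures agree, and away from $\overline{U_j}$ the extremal function $h_j$ is maximal so $\MA(h_j)$ carries no mass on $\{h_j<\p\}$ there. Feeding all this back, together with $\MA(h_j)=\MA(\f)$ on $U_j$, reduces the problem to estimating a combination of $\int_{U_j}\MA(\f)$ --- which tends to $0$ because $\MA(\f)$ is a finite measure putting no mass on pluripolar sets and $\bigcap_jU_j$ is pluripolar --- and of the ``residual'' Monge-Amp\`ere mass of $h_j$ sitting outside $U_j$ on the set $\{h_j<\p\}$.

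The main obstacle is exactly this residual term: unlike in the classical locally bounded setting it does not obviously vanish, and I would control it by weighting it by $\tfrac{\p-h_j}{\p-\f}\in[0,1]$, which decreases to $0$ q.e.\ as $h_j\nearrow\p$, and then combining this with a uniform-integrability argument based on $\MA(h_j)\weak\MA(\p)$ (continuity of the complex Monge-Amp\`ere operator along increasing sequences with limit in $\EcX$) --- possibly after replacing $\{\rho<-j\}$ by a comparable sequence of open sets with $\MA(\f)$-negligible boundaries, or after a truncation $\f_k:=\max(\f,-k)$ to reduce to the locally bounded case and then letting $k\to\infty$. The quantitative inputs available are limited precisely because $\f$ is only assumed to have full Monge-Amp\`ere mass (class $\EcX$) rather than finite energy: one has the generalized comparison principle, the fact that full-mass functions' Monge-Amp\`ere measures do not charge pluripolar sets, and monotone continuity of the operator within $\EcX$ --- exactly the tools just used. (If one already disposes of the comparison $\Capfp\le F(\Capo)$ with $F(0^+)=0$, i.e. Theorem A(ii), the statement is then immediate from the classical fact that pluripolar sets have vanishing $\Capo$-capacity.)
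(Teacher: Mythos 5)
Your main argument does not close, and you essentially say so yourself. Two steps are genuinely problematic. First, the claim that $h_j:=h^*_{\f,\p,U_j}$ increases to $\p$ as the open sets $U_j=\{\rho<-j\}$ decrease to a pluripolar set is not a consequence of Proposition \ref{prop: properties}: item (ii) computes the extremal function of a pluripolar set, and item (iii) treats \emph{increasing} sequences of sets; the behaviour of $h^*_{\f,\p,\cdot}$ under decreasing sequences is exactly the delicate point. In the paper this monotone-limit statement (Lemma \ref{lem: pluripolar}) is \emph{deduced from} Theorem \ref{thm: pluripolar}, not the other way around, so taking $h\equiv\p$ as an input here is circular. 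Second, the ``residual'' mass of $\MA(h_j)$ on $\{h_j<\p\}\setminus \overline{U_j}$ is precisely the quantity you need to kill, and your proposed remedies (weighting by $\tfrac{\p-h_j}{\p-\f}$, uniform integrability, truncation) are left as a programme rather than carried out; note also that the convergence $\MA(h_j)\weak\MA(\p)$ you invoke presupposes $h_j\nearrow\p$, i.e.\ the unproved first point.

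The paper's proof is completely different and much shorter: since $\f\in\EcX$, one has $\f\in\Ec_\chi(X,\omega)$ for some convex weight $\chi$, and one chooses $u\in\Ec_\chi(X,\omega)$, $u\le 0$, with $E\subset\{u=-\infty\}$. For any candidate $v$ with $\f\le v\le\p\le 0$, the Chebyshev inequality
$$
\int_{\{u<-t\}}\MA(v)\le\frac{1}{-\chi(-t)}\int_X(-\chi\circ u)\,\MA(v)
$$
combined with the uniform energy bound $\int_X(-\chi\circ u)\MA(v)\le 2\bigl(E_\chi(u)+2^nE_\chi(\f)\bigr)$ from \cite[Lemma 2.3, Proposition 2.5]{GZ07} gives $\Capfp(\{u<-t\})=O\bigl(1/|\chi(-t)|\bigr)\to 0$; since each $\{u<-t\}$ is an open neighbourhood of $E$, this is the conclusion. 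No extremal functions and no maximality/locality bookkeeping are needed. Your closing parenthetical --- deduce the theorem from the bound $\Capfp\le F(\Capo)$ of Theorem \ref{thm: comparison of Capacity}, whose proof is independent of Theorem \ref{thm: pluripolar} --- is a legitimate alternative route (after normalizing $\p\le 0$ so that $\Capfp\le\Capa_{\f,0}$), and is in the same quantitative spirit as the paper's argument; but as written it is a one-line aside, not the proof you actually develop.
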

\begin{proof}
Assume  that $\f\in \EcX$ and fix a pluripolar set $E\subset X$. By translating
$\p$ and $\f$ by a constant we can assume that $\p\leq 0$.
It follows from \cite[Proposition 2.2]{GZ07} that $\f\in \Ec_{\chi}(X,\omega)$ for some convex increasing function 
$\chi: \R^-\rightarrow \R^-$. We can find 
$u\in \Ec_{\chi}(X,\omega), u\leq 0$ such that 
$E\subset \{u=-\infty\}$. We claim that 
\begin{equation*}
\Capfp (\{u<-t\})\leq \frac{-2}{\chi(-t)}\left( E_{\chi}(u)+2^n E_{\chi}(\f)\right),\ \forall t>0.
\end{equation*}
Indeed, let $v\in PSH(X,\omega)$ such that
$\f\leq v\leq \p.$ We obtain immediately that
\begin{equation*}
\int_{\{u<-t\}} \MA(v)\leq \frac{1}{-\chi (-t)}\int_{\{u<-t\}}(-\chi\circ u ) \MA(v).
\end{equation*}
From this and \cite[Proposition 2.5]{GZ07} we get
\begin{equation*}
\int_{\{u<-t\}} MA(v)\leq \frac{-2}{\chi (-t)}\left( E_{\chi}(u)+ E_{\chi}(v)\right).
\end{equation*}
This coupled with  the \textit{fundamental inequality} in 
\cite[Lemma 2.3]{GZ07} yield the claim.
Since for any $t>0$, $E\subset \{u<-t\}$ we obtain 
$$
\Capfp^* (E)\leq \Capfp (u<-t)\to 0 \ \ \text{as}\ \ t\to+\infty.
$$ 
\end{proof}

From now on we fix $\f, \p$ two functions in $\EcX$ such that $\f<\p$
quasi everywhere on $X$.

Given any $u\in \pshXo$ such that $u\leq 0$, it follows from \cite[Example 2.14]{GZ07} (see also the Main Theorem in \cite{CGZ07}) that 
$u_p:=-(-u)^p$ belongs to $\EcX$ for any $0<p<1$. The same arguments can be applied to get the following result: 
\begin{lem}\label{lem: the two weights}
Let $\chi: \R^- \rightarrow \R^-$ be any measurable function. Assume that there exists  $q>0$ such that 
\begin{equation*}
\sup_{t\leq -1} |\chi(t)| (-t)^{-q} =C<+\infty.
\end{equation*}
Then for any $u\in \psh(X,\omega)$ such that $u\leq -1$ and any $0<p<\frac{1}{q+1}$ we have 
$$
\int_X |\chi\circ u_p| \MA(u_p) \leq A, 
$$
where $u_p:= -(-u)^p$ and $A$ is a positive constant depending only on $C,p,q$.
\end{lem}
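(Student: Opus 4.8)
The plan is to strip off the weight $\chi$ by a crude pointwise comparison, reduce to a uniform energy bound for $u_p$, and then prove that bound by redoing the computation behind \cite[Example 2.14]{GZ07} and \cite{CGZ07}.

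\emph{Step 1: removing $\chi$.} Since $u\le -1$ and $p>0$ we have $(-u)^p\ge 1$, so $u_p=-(-u)^p\le -1$ on all of $X$; the hypothesis on $\chi$ then gives the pointwise bound $|\chi(u_p)|\le C(-u_p)^q$. As $-u_p=(-u)^p$ we have $(-u_p)^q=(-u)^{pq}$, whence
\begin{equation*}
\int_X |\chi\circ u_p|\,\MA(u_p)\ \le\ C\int_X (-u_p)^q\,\MA(u_p)\ =\ C\int_X (-u)^{pq}\,\MA(u_p).
\end{equation*}
So it suffices to prove $u_p\in{\Ec}^q(X,\omega)$ with $\int_X(-u_p)^q\,\MA(u_p)\le A'(p,q)$. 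Note that $0<p<\frac1{q+1}$ is exactly the condition $q<\frac1p-1$, i.e.\ the borderline exponent for which $-(-u)^p$ still has finite $(-t)^q$-energy, so this does not follow from a ready-made statement and the estimate must be redone.

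\emph{Step 2: reduction to bounded $u$.} Put $u_M:=\max(u,-M)\downarrow u$; then $u_M$ is bounded, $(u_M)_p=\max(u_p,-M^p)\downarrow u_p$, and $\sup_X u_M=\sup_X u$ for all $M$. Since $u_p\in\EcX$ and the $(u_M)_p$ are bounded, $\MA((u_M)_p)\to\MA(u_p)$ weakly, by continuity of the Monge--Amp\`ere operator along decreasing sequences in $\EcX$ \cite{GZ07}. The functions $(-(u_M)_p)^q$ are nonnegative, lower semicontinuous, and increase to $(-u_p)^q$, so a Fatou-type argument (test against continuous functions below each $(-(u_M)_p)^q$, then let $M\to\infty$) gives
\begin{equation*}
\int_X (-u_p)^q\,\MA(u_p)\ \le\ \liminf_{M\to\infty}\int_X (-(u_M)_p)^q\,\MA((u_M)_p),
\end{equation*}
so it is enough to bound the right-hand integrals, i.e.\ to treat bounded $u$.

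\emph{Step 3: the estimate for bounded $u$.} Write $h:=-u\ge 1$ and $\phi:=u_p=-h^p$. Differentiating,
\begin{equation*}
\omega+dd^c\phi=(1-p\,h^{p-1})\,\omega+p\,h^{p-1}(\omega+dd^c u)+p(1-p)\,h^{p-2}\,du\wedge d^cu,
\end{equation*}
a sum of three positive currents (the first because $h\ge1$ forces $p\,h^{p-1}\le p<1$). Expanding the $n$-th power and using $du\wedge d^cu\wedge du\wedge d^cu=0$, the gradient term occurs at most once, so $\MA(\phi)$ is a finite combination, with coefficients depending only on $n,p,q$, of terms
\begin{equation*}
h^{(p-1)(n-j)}\,\omega^{j}\wedge(\omega+dd^cu)^{n-j}\ \ (0\le j\le n),\qquad h^{(p-1)m+p-2}\,\omega^{j}\wedge(\omega+dd^cu)^{m}\wedge du\wedge d^cu\ \ (j+m=n-1).
\end{equation*}
I multiply by $(-\phi)^q=h^{pq}$ and integrate. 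For a term of the first type with $j<n$ the exponent of $h$ is $pq-(1-p)(n-j)<0$, since $(1-p)(n-j)\ge 1-p>pq$ --- this is exactly where $p<\frac1{q+1}$ is used --- so, $h\ge 1$ being bounded below by $1$, the integral is $\le\int_X\omega^n$; for $j=n$ the integral is $\le\int_X h^{pq}\,\omega^n=\int_X(-u)^{pq}\,\omega^n$. For a term of the second type I integrate by parts (Stokes) against the closed positive current $\omega^{j}\wedge(\omega+dd^cu)^{m}$, which turns it into full-degree integrals $\int_X h^{\beta}\,\omega^{a}\wedge(\omega+dd^cu)^{b}$ with $a+b=n$ and $\beta=pq-(1-p)(m+1)\le pq-(1-p)<0$, hence again $\le\int_X\omega^n$ up to a constant. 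Collecting everything,
\begin{equation*}
\int_X (-\phi)^q\,\MA(\phi)\ \le\ A''\Big(\int_X\omega^n+\int_X(-u)^{pq}\,\omega^n\Big),
\end{equation*}
and $\int_X(-u)^{pq}\,\omega^n$ is bounded by a constant depending only on $p,q$ (uniform integrability of $\omega$-psh functions). Together with Steps 1--2 this yields the desired $A=A(C,p,q)$.

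\emph{Main obstacle.} The only genuinely delicate point is the exponent bookkeeping in Step 3: after the binomial expansion, and especially after integrating the gradient terms by parts, one must check that every surviving power of $h=-u$ is non-positive except on the single term carried by $\omega^n$, and this is precisely what the constraint $p<\frac1{q+1}$ guarantees. The comparison in Step 1 and the approximation in Step 2 are routine.
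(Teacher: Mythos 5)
Your proof is correct and follows essentially the same route as the paper's: reduction to bounded $u$ via the canonical approximants, the identity $\omega+dd^cu_p=\omega+p(1-p)(-u)^{p-2}du\wedge d^cu+p(-u)^{p-1}dd^cu$ split into positive pieces, the pointwise bound $|\chi\circ u_p|\le C(-u)^{pq}$, the exponent check $pq<1-p$, and an integration by parts for the gradient terms; you are merely more explicit about the binomial expansion. The one caveat concerns the pure $\omega^n$ term, where you invoke ``uniform integrability'' to bound $\int_X(-u)^{pq}\omega^n$ by a constant depending only on $p,q$: under the stated hypothesis $u\le-1$ alone this fails (take $u\equiv-M$ with $M$ large, which also shows the lemma needs $\sup_X u$ under control, say $\sup_X u=-1$, as in all of its applications in the paper), but the paper's own proof silently makes the same normalization, so this is an imprecision inherited from the statement rather than a gap in your argument.
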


\begin{proof}
In the proof we use $A$ to denote various positive constants which are under control. By considering $u^j:=\max(u,-j)$, the canonical approximants of $u$, and letting $j\to +\infty$ it suffices to treat the case when $u$ is bounded. 
We compute 
$$
\omega +dd^c u_p = \omega + p(1-p)(-u)^{p-2} du \wedge d^c u + p(-u)^{p-1}dd^c u.
$$
We thus get 
$$
0\leq \omega +dd^c u_p \leq (-u)^{p-1} (\omega + dd^c u) + \omega + (-u)^{p-2}
du \wedge d^c u.
$$
We need to verify the following bounds:
$$
\int_X |\chi\circ u_p | (-u)^{p-1}(\omega +dd^c u)^k\wedge \omega^{n-k} \leq A
$$
and 
$$
\int_X |\chi\circ u_p | (-u)^{p-2}du \wedge d^c u \wedge (\omega +dd^c u)^k\wedge \omega^{n-k-1} \leq A,
$$
where $k=0,1,...,n$.
Let us consider the first one. By assumption we have 
$$
|\chi \circ u_p|(-u_p)^{-q} \leq C.
$$
To bound the first term, it thus suffices to get a bound for
$$
\int_X (-u)^{p-1+pq} (\omega +dd^c u)^k\wedge \omega^{n-k},
$$
which is easy since $p+pq-1<0$. For the second one it suffices get a bound for 
$$
\int_X  (-u)^{p-2+pq} du \wedge d^c u \wedge (\omega +dd^c u)^k\wedge 
\omega^{n-k-1},
$$
which follows easily by the same reason and by integration by parts.

\end{proof}

We know from Theorem \ref{thm: pluripolar} that $\Capfp$ vanishes on 
pluripolar subsets of $X$. This suggests that $\Capfp$ is dominated by $F(\Capo)$, where $F$ is some positive function vanishing at $0$. The following result 
gives an explicit formula of $F$. 

\begin{thm}\label{thm: comparison of Capacity}
Let $\chi: \R^-\rightarrow \R^-$ be a convex increasing function and 
$\f \in \Ec_{\chi}(X,\omega)$. Let $q>0$ be a positive real number such that 
\begin{equation}\label{eq: the two weights}
\sup_{t\leq -1} |\chi(t)| (-t)^{-q} <+\infty.
\end{equation}
Then for any $p<\frac{1}{1+q}$ there exists $C>0$ depending on $p,q,\chi,\f$ such that
$$
\Capa_{\f,0}(K) \leq \frac{C}{\left |\chi \left( -\Capa_{\omega}(K)^{\frac{-p}{n}}\right)\right|} \ ,\ \forall K\subset X.
$$
\end{thm}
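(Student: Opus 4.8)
The plan is to fix an arbitrary admissible potential $v\in\pshXo$ with $\f\le v\le 0$, to bound $\int_K\MA(v)$ by a quantity of the form $C/|\chi(-\Capo(K)^{-p/n})|$ uniformly in $v$, and then to take the supremum over $v$. The idea is to split $K$ at a threshold $s$ depending on $\Capo(K)$: on the part where $\f<-s$ the mass of $\MA(v)$ is absorbed by the finite $\chi$-energy of $\f$, exactly as in the proof of Theorem \ref{thm: pluripolar}; on the complementary part $v\ge\f\ge-s$ is bounded below, and a rescaling of $v$ produces a competitor for the classical capacity $\Capo(K)$. Before starting I note that $\f<0$ q.e. forces $\f\le 0$ on $X$, and that the inequality is automatic when $\Capo(K)\ge 1$ (then $\Capo(K)^{-p/n}\le1$, so $|\chi(-\Capo(K)^{-p/n})|\le|\chi(-1)|$, while $\Capa_{\f,0}(K)\le\int_X\omega^n$); hence I may assume $\Capo(K)\le 1$ and set $s:=\Capo(K)^{-p/n}\ge1$, and also $\chi(-s)<0$, the claimed bound being infinite otherwise.

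First I would handle the ``singular part'' $K\cap\{\f\le -s\}$. Since $|\chi\circ\f|\ge|\chi(-s)|$ there,
\[
\int_{K\cap\{\f\le -s\}}\MA(v)\le\frac{1}{|\chi(-s)|}\int_X|\chi\circ\f|\,\MA(v),
\]
and, just as in the proof of Theorem \ref{thm: pluripolar}, \cite[Proposition 2.5]{GZ07} together with the fundamental inequality \cite[Lemma 2.3]{GZ07} (using $\f\le v\le0$ and the convexity of $\chi$) bound $\int_X|\chi\circ\f|\,\MA(v)\le 2\bigl(E_\chi(\f)+E_\chi(v)\bigr)\le 2(1+2^n)E_\chi(\f)=:C_1$, a constant depending only on $n,\chi,\f$.

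Next I would treat the ``bounded part'' $K\cap\{\f>-s\}$. Since $s\ge1$, the function $g:=\max(v/s,-1)$ is $\omega$-psh with $-1\le g\le 0$, hence admissible in the definition of $\Capo(K)$. On $K\cap\{\f>-s\}\subset\{v>-s\}=\{v/s>-1\}$, locality of the non-pluripolar Monge-Amp\`ere operator together with positivity of the currents give
\[
\MA(g)=\Bigl(\tfrac1s(\omega+dd^cv)+\bigl(1-\tfrac1s\bigr)\omega\Bigr)^n\ \ge\ \tfrac{1}{s^n}\,\MA(v),
\]
so that $\int_{K\cap\{\f>-s\}}\MA(v)\le s^n\int_K\MA(g)\le s^n\,\Capo(K)$.

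Adding the two bounds gives $\int_K\MA(v)\le C_1/|\chi(-s)|+s^n\Capo(K)$, and it remains to see that the second term is itself controlled by $1/|\chi(-s)|$ for the chosen $s$. Indeed $s^n\Capo(K)=\Capo(K)^{1-p}$, and the growth hypothesis yields $|\chi(-s)|\le C_0 s^q$ for $s\ge1$ with $C_0:=\sup_{t\le-1}|\chi(t)|(-t)^{-q}$, whence $\Capo(K)^{1-p}|\chi(-s)|\le C_0\,\Capo(K)^{\,1-p-pq/n}\le C_0$ because $p<\tfrac1{1+q}\le\tfrac1{1+q/n}$ makes the exponent nonnegative and $\Capo(K)\le1$. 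Therefore $\int_K\MA(v)\le (C_1+C_0)/|\chi(-\Capo(K)^{-p/n})|$, and taking the supremum over $v$ finishes the proof with $C=\max\bigl(C_1+C_0,\ |\chi(-1)|\int_X\omega^n\bigr)$. I expect the rescaling in the bounded part to be the only step requiring an idea rather than a routine estimate: replacing $v$ by $\max(v/s,-1)$ is exactly what converts the a priori uncontrolled mass on $K\cap\{\f>-s\}$ into $s^n$ times a quantity dominated by $\Capo(K)$, so that the threshold $s=\Capo(K)^{-p/n}$ makes the two contributions balance; everything else follows the standard patterns of \cite{GZ07} and of Theorem \ref{thm: pluripolar}.
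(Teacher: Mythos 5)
Your proof is correct, but it takes a genuinely different route from the paper's. The paper introduces the global extremal function $V_K$ of $K$, shows via Lemma \ref{lem: the two weights} that $u=-(-V_K^*+M_K+1)^p$ lies in $\Ec_{\chi}(X,\omega)$, bounds $\int_X|\chi\circ u|\,\MA(h)$ uniformly in $h$ by the contradiction argument of Lemma \ref{lem: technical}, and finally converts $M_K=\sup_X V_K^*$ into capacity through the Alexander--Taylor type inequality $M_K\geq C_2\,\Capo(K)^{-1/n}$ of \cite{GZ05}. You instead split $K$ along the sublevel set $\{\f\leq -s\}$ with $s=\Capo(K)^{-p/n}$: the deep part is absorbed by the $\chi$-energy of $\f$ exactly as in Theorem \ref{thm: pluripolar} (via \cite[Proposition 2.5]{GZ07} and the fundamental inequality), while on the shallow part the rescaled competitor $\max(v/s,-1)$ together with locality of the non-pluripolar product gives $\int\MA(v)\leq s^n\,\Capo(K)$, and the choice of $s$ balances the two contributions because $p<1/(1+q)\leq n/(n+q)$. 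Your argument is more self-contained: it bypasses Lemmas \ref{lem: the two weights} and \ref{lem: technical} and the extremal-function machinery entirely, produces an explicit constant, and avoids the (harmless but unaddressed) factor $C_2^p$ that the paper's route leaves inside the argument of $\chi$; it is essentially the same decompose-and-rescale device the paper itself deploys later, e.g.\ in Lemma \ref{lem: uniform estimate}. What the paper's route buys is the extremal-function formulation, which meshes with the $(\f,\p)$-capacity formalism of Section 2. Two small points you should make explicit: $v\in\Ec_{\chi}(X,\omega)$ is needed before invoking \cite[Proposition 2.5]{GZ07} (it follows from $\f\leq v\leq 0$ and the fundamental inequality, as you indicate), and the locality you use on $\{v>-s\}$ is locality in the plurifine topology, since that set is merely plurifinely open rather than open.
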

As a concrete example, when $\f\in \Ec^q(X,\omega)$ for some $q>0$ and  $p<1/(1+q)$, then we 
can take $F(s):=s^{\frac{pq}{n}}$ for $s>0$, getting 
$$
\Capa_{\f,0} (K) \leq C \,\Capa_{\omega}(K)^{\frac{pq}{n}}.
$$
\begin{proof}
Fix $p>0$ such that $p(q+1)<1$. Let $V_K$ be the extremal 
$\omega$-plurisubharmonic function of $K$:
$$
V_K:=\sup\{u \ \setdef \,u\in \psh(X, \omega),\, u\leq 0 \, \ \rm{on}\,\, K\},
$$
and $M_K:=\sup_X V_K^*$.  It follows from  (\ref{eq: the two weights}) and  Lemma \ref{lem: the two weights} that the function 
$$
u= -(-V_K^*+M_K+1)^p 
$$
belongs to $\Ec_{\chi}(X,\omega)$. 
Fix $h\in \psh(X,\omega)$ be such that $\f\leq h\leq 0$. 
It follows from Lemma \ref{lem: technical} below that 
$$
\int_X \vert \chi \circ u \vert \MA(h)\leq C_1,
$$
where $C_1>0$ only depends on $\chi$, $p, q$ and $\f$.
Therefore, using the fact that $V_K^*\equiv 0$ quasi everywhere on $K$ we  get 
\begin{eqnarray*}
\int_K \MA(h)\leq \int_X \frac{\vert \chi\circ u\vert}{\vert \chi(- M_K^p)\vert} \omega_h^n \leq \frac{C_1}{\vert \chi(-M_K^p)\vert}.
\end{eqnarray*}
It follows from \cite{GZ05} that $M_K\geq C_2 \Capa(K)^{-1/n}.$ This coupled with the above  yield the result.
\end{proof}

\begin{lem}\label{lem: technical}
Assume that $\chi$, $p,q$ and $\f$ are as in Theorem \ref{thm: comparison of Capacity}. 
Then there exists $C>0$ depending on $\chi,p,q, \f$ such that 
$$
\int_X \vert \chi(-(-u)^p)\vert \MA(v) \leq C, \ \forall u, v\in \psh(X,\omega), \ \sup_X u=-1, \ \f \leq v\leq 0.
$$
\end{lem}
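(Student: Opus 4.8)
The plan is to reduce everything to the finite-$\chi$-energy machinery of \cite{GZ07}: the point is that the attenuated function $w:=-(-u)^p$ itself lies in $\Ec_\chi(X,\omega)$ with $\chi$-energy bounded purely in terms of the normalization $\sup_X u=-1$ and the parameters, so that $\int_X|\chi(w)|\,\MA(v)$ becomes a mixed energy term controlled by the individual $\chi$-energies of $w$ and $v$.

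Concretely, I would first replace $u,v$ by their canonical approximants $\max(u,-j),\max(v,-j)$ and let $j\to+\infty$ by monotone convergence, so that we may assume $u,v$ bounded (exactly as at the start of the proof of Lemma \ref{lem: the two weights}; this step is convenient but not essential). Since $\sup_X u=-1$ we have $u\le -1$, so hypothesis (\ref{eq: the two weights}) together with Lemma \ref{lem: the two weights}, applied with the weight $\chi$ and the exponent $p<\frac1{1+q}$, gives $w:=-(-u)^p\in\Ec_\chi(X,\omega)$ and the \emph{uniform} bound
\[
\int_X|\chi(w)|\,\MA(w)\le A,
\]
with $A$ depending only on $\chi,p,q$ (and in particular not on $u$).

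Next, since $\f\le v\le 0$ and $\f\in\Ec_\chi(X,\omega)$, the fundamental inequality \cite[Lemma 2.3]{GZ07} gives $v\in\Ec_\chi(X,\omega)$ with
\[
\int_X|\chi(v)|\,\MA(v)\le 2^n\int_X|\chi(\f)|\,\MA(\f),
\]
a bound independent of $v$. Finally I would apply \cite[Proposition 2.5]{GZ07} — the same estimate already used in the proof of Theorem \ref{thm: pluripolar}, which bounds $\int_X|\chi(a)|\,\MA(b)$ by $2\bigl(\int_X|\chi(a)|\,\MA(a)+\int_X|\chi(b)|\,\MA(b)\bigr)$ for \emph{any} $a,b\in\Ec_\chi(X,\omega)$, with no comparison between $a$ and $b$ required. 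Taking $a=w$, $b=v$ and combining with the two displays above yields
\[
\int_X\bigl|\chi\bigl(-(-u)^p\bigr)\bigr|\,\MA(v)\le 2\Bigl(A+2^n\int_X|\chi(\f)|\,\MA(\f)\Bigr)=:C,
\]
with $C$ depending only on $\chi,p,q,\f$, as required.

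The only real content is the identification of $w=-(-u)^p$ as an element of $\Ec_\chi$ with energy controlled independently of $u$ — this is precisely Lemma \ref{lem: the two weights}, and it is what forces $p<\frac1{1+q}$; the remaining steps are direct invocations of the energy estimates of \cite{GZ07}, so I do not expect a genuine obstacle. (In the model case $\chi(t)=-(-t)^q$ one could instead note $|\chi(-(-u)^p)|=(-u)^{pq}$ with $pq<1$ and bound $\int_X(-u)^{pq}\,\MA(v)$ via Hölder against $\int_X(-u)\,\MA(v)$, but the argument above is cleaner and uniform in $\chi$.)
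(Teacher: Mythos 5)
Your argument is correct, and it uses exactly the same three ingredients as the paper's proof: Lemma \ref{lem: the two weights} to place $-(-u)^p$ in $\Ec_{\chi}(X,\omega)$ with $\chi$-energy bounded independently of $u$, the fundamental inequality \cite[Lemma 2.3]{GZ07} to control $E_{\chi}(v)$ by $2^n E_{\chi}(\f)$, and \cite[Proposition 2.5]{GZ07} to handle the mixed term $\int_X |\chi(a)|\MA(b)$. The difference is structural: you run the estimate directly and quantitatively, whereas the paper argues by contradiction, taking sequences $(u_j)$, $(v_j)$ with $\int_X |\chi(-(-u_j)^p)|\MA(v_j)\geq 2^{(n+2)j}$, forming $u=\sum_j 2^{-j}u_j$ and $v=\sum_j 2^{-j}v_j$, and contradicting the mere \emph{finiteness} of $\int_X |\chi\circ u_p|\,\MA(v)$ guaranteed by \cite[Proposition 2.5]{GZ07}. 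Your direct route requires the quantitative form of that proposition (the bound by $2\left(E_{\chi}(a)+E_{\chi}(b)\right)$ for arbitrary pairs in $\Ec_{\chi}(X,\omega)$ bounded above by $0$) together with the uniformity in $u$ of the constant $A$ in Lemma \ref{lem: the two weights}; both are indeed available, since the paper invokes that same quantitative bound in the proof of Theorem \ref{thm: pluripolar} and Lemma \ref{lem: the two weights} explicitly states that $A$ depends only on $C,p,q$. In exchange you get an explicit constant $C=2\left(A+2^n E_{\chi}(\f)\right)$ and avoid the series construction; the preliminary reduction to bounded $u,v$ is, as you note, not needed. This is a valid and arguably cleaner rewriting of the same underlying argument.
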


\begin{proof}
We argue by contradiction, assuming that there are two sequences $(u_j)$, $(v_j)$
of functions in $\psh(X,\omega)$ such that $\sup_X u_j= -1$, $\f\leq v_j\leq 0$, and 
$$
\int_X \vert \chi(-(- u_j)^p)\vert \MA(v_j) \geq 2^{(n+2)j}, \ \forall j\in \N . 
$$
Set 
$$
u:=\sum_{j=1}^{+\infty} 2^{-j} u_j,\ v=\sum_{j=1}^{+\infty} 2^{-j} v_j.
$$
Then $u\in\psh(X,\omega)$, $u\leq -1$. Moreover, it follows from  Lemma \ref{lem: the two weights} that
$$
u_p:=-(-u)^p \in \Ec_{\chi}(X,\omega).
$$ 
We also have $\f\leq v\leq 0$, in particular 
$v\in \Ec_{\chi}(X,\omega)$. But
$$
\int_X \vert \chi\circ u_p \vert \MA(v) \geq \sum_{j=1}^{+\infty}2^j=+\infty,
$$
which contradicts \cite[Proposition 2.5]{GZ07}.
\end{proof}

%---------------------------------------------------%
%properties of f-capacity
%----------------------------------------------------%

\begin{prop}\label{prop: maximal}
Let  $E$ be a Borel subset of $X$ and set $h_E:=h^*_{\f,\psi,E}$ the relative $(\f,\p)$-extremal function of $E$. Then 
$$
\MA(h_E)\equiv 0 \ {\rm on}\ \{h_E<\p\}\setminus \bar{E}.
$$
\end{prop}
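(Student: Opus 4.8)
The plan is to show that $h_E$ is "maximal" away from $\bar E$ in the region where it is strictly below the upper obstacle $\p$, using a standard balayage/local competitor argument adapted to the two-obstacle situation. Let $U := \{h_E < \p\} \setminus \bar E$, which is an open subset of $X$, and suppose for contradiction that $\MA(h_E)(U) > 0$. Then there is a small coordinate ball $B \Subset U$ with $\MA(h_E)(B) > 0$; since $B$ is disjoint from $\bar E$, the constraint $u \leq \f$ q.e. on $E$ plays no role for competitors that are modified only inside $B$. First I would solve the Dirichlet problem for the homogeneous complex Monge-Amp\`ere equation on $B$ with boundary data $h_E|_{\partial B}$, obtaining a function $w$ on $B$ that is $\omega$-psh, satisfies $w = h_E$ on $\partial B$, $w \geq h_E$ on $B$, and $(\omega + dd^c w)^n = 0$ on $B$. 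Gluing, define $\tilde h := w$ on $B$ and $\tilde h := h_E$ on $X \setminus B$; this is $\omega$-psh on $X$ (the glued function is usc and locally psh near $\partial B$ by the maximum of the two, since $w \geq h_E$).

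The next step is to check that $\tilde h$ is an admissible competitor in the definition of $h_{\f,\p,E}$. We have $\tilde h \geq h_E \geq \f$ everywhere, and $\tilde h = h_E \leq \f$ q.e. on $E$ because $B \cap E = \emptyset$. The only genuine point to verify is the upper obstacle $\tilde h \leq \p$ on $B$: by shrinking $B$ first so that $\sup_{\bar B} h_E < \inf_{\bar B}\p$ is \emph{not} automatic, so instead I would choose $B$ small enough that, by upper semicontinuity of $h_E - \p$ on the compact set $\bar B \subset \{h_E < \p\}$, we have $h_E \leq \p - \delta$ on $\bar B$ for some $\delta > 0$; then by the comparison principle (or the maximum principle for the maximal function $w$, whose boundary values lie below $\p - \delta$ while $\p$ is $\omega$-psh) we get $w \leq \p$ on $B$. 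Hence $\tilde h$ is a candidate in the sup defining $h_{\f,\p,E}$, so $\tilde h \leq h_{\f,\p,E} \leq h_E$ on $X$. But $\tilde h = w \geq h_E$ on $B$, forcing $w = h_E$ on $B$, hence $\MA(h_E)(B) = \MA(w)(B) = 0$, a contradiction. This proves $\MA(h_E) \equiv 0$ on $U$.

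The main obstacle I anticipate is the regularity/usability of the solvability of the local Dirichlet problem and the validity of the gluing and comparison steps in this generality: $h_E$ need not be bounded on $B$ (though it is finite a.e., and near a point of $U$ one can always pass to an even smaller ball where $h_E$ is bounded, since $h_E$ is locally bounded above by $\p < +\infty$ and one only needs $\MA(h_E)(B)>0$ at \emph{some} such ball — note $h_E > -\infty$ on a dense set, so small balls on which $h_E$ is bounded and carry positive mass do exist as long as the mass is positive, using that pluripolar sets carry no non-pluripolar Monge-Amp\`ere mass). One must therefore be slightly careful to first reduce to a ball where $h_E \in L^\infty$, then invoke Bedford--Taylor's solution of the homogeneous Dirichlet problem and the standard gluing lemma, and finally use the comparison principle for bounded $\omega$-psh functions to get $\tilde h \le h_E$. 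With that reduction in place, all remaining steps are the routine verifications sketched above.
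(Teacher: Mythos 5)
Your balayage strategy is the same one the paper uses for its first step, but there is a genuine gap at the point where you verify that the glued competitor satisfies the upper obstacle, and it traces back to the fact that $\p$ is only $\omega$-psh, hence merely upper semicontinuous. First, $h_E-\p$ is a difference of two usc functions and need not be usc, so $\{h_E<\p\}\setminus\bar E$ need not be open and you cannot in general find $\delta>0$ with $h_E\le\p-\delta$ on $\bar B$. More seriously, even if you arrange $w\le\p-\delta$ on $\partial B$, the conclusion $w\le\p$ on $B$ is \emph{false} for a general $\omega$-psh $\p$: the comparison principle goes the wrong way (the maximal function $w$, having the \emph{smallest} Monge--Amp\`ere measure, dominates rather than is dominated), and the maximum principle only gives $\sup_B(\rho+w)\le\sup_{\partial B}(\rho+w)$, which is useless unless ${\rm osc}_{\bar B}\,\p$ is small compared to $\delta$ --- i.e.\ unless $\p$ is continuous. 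Concretely, if $\p$ has a logarithmic pole at some $z_0\in B$ (think of $\p=\log|z|$ on the unit disc with $w\equiv 0$), then $w(z_0)>-\infty=\p(z_0)$, so your $\tilde h$ violates $\tilde h\le\p$ and is not an admissible competitor; since the definition of $h_{\f,\p,E}$ requires $u\le\p$ everywhere (not just q.e.), the argument collapses there.

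The paper resolves exactly this by a two-step structure that your proposal is missing: it first proves the statement assuming $\p$ continuous (where the oscillation of $\p$ on small balls can be made small, so the maximum principle does give $v_j^k\le\p$ on $B$), and then removes the continuity hypothesis by taking continuous $\omega$-psh $\p_j\searrow\p$, noting $h_{\f,\p_j,E}^*\searrow h_E$, and transferring the vanishing of $\MA(h_{\f,\p_j,E}^*)$ to $\MA(h_E)$ via quasi-continuity and the capacity estimate $\Capfp\le F(\Capo)$ of Theorem \ref{thm: comparison of Capacity}. This second step is substantive and cannot be skipped. A further, more minor issue: your reduction to a ball on which $h_E$ is bounded is not justified ($h_E\ge\f$ gives no lower bound, and $h_E$ need not be locally bounded anywhere on $U$); the paper instead works with the truncations $\max(u_j,-k)$ of Choquet approximants, solves the Dirichlet problem for these bounded data, and passes to the limit using the continuity of $\MA$ along monotone sequences in $\EcX$.
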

\begin{proof}
We first assume that $\p$ is continuous on $X$. Set $h:=h_E$ and let $x_0\in X\setminus \bar{E}$ 
be such that $(h-\p)(x_0)<0.$ Let $B:=B(x_0,r)\subset X\setminus \bar{E}$ be a small ball around $x_0$ 
such that $\sup_{\bar{B}}(h-\p)(x)=-2\delta<0.$ 
Let $\rho$ be a local potential of $\omega$ in $B.$ Shrinking $B$ a little bit we can assume that
 $\sup_{\bar{B}}\vert \rho\vert <\delta$ and ${\rm osc}_{\bar{B}} \psi<\delta/2$. 
By definition of $h$ and by Choquet's lemma we can find an increasing sequence $(u_j)_j\subset \Ec(X,\omega)$ such that 
$u_j= \f$ quasi everywhere on $E$, $u_j\leq \p$ on $X$, and $(\lim_j u_j)^*=h$.
For each $j, k\in \N$, we solve the Dirichlet problem to find 
$v_j^k\in \psh(X,\omega)\cap L^{\infty}(X)$ such that $\MA(v_j^k)=0$ in $B$ and $v_j^k\equiv \max(u_j,-k)$ on $X\setminus B$. 
Since 
$$
\rho+v_j^k\leq\rho+h\leq -\delta+\p\leq \sup_{\bar{B}}\p-\delta
$$
on $\partial B$, we deduce from the maximum principle that $v_j^k\leq \inf_{\bar{B}} \p -\delta/2-\rho\leq \p$ on $B$.
 Furthermore, taking
 $k$ big enough such that $\p\geq -k$, we can conclude that $v_j^k\leq\p$ on $X$. For $j\in \N$ fixed, by the comparison principle 
 $(v_j^k)_k$ decreases to $v_j\in \Ec(X,\omega)$. Then $u_j\leq v_j\leq h$ since $v_j=u_j=\f$ on $E$ and $v_j\leq \p$ on $X$. 
 It follows from \cite{GZ07} that the sequence of Monge-Amp\`ere measures $MA(v_j^k)$ converges weakly to $MA(v_j)$. 
 Thus $MA(v_j)(B)=0.$ On the other hand, $v_j$ increases almost everywhere to $h$ and these functions belong to $\Ec(X,\omega).$ 
 The same arguments as in  \cite[Theorem 2.6]{GZ07} show that $MA(v_j)$ converges weakly to $MA(h)$. We infer that $MA(h)(B)=0$.

It remains to remove the continuity hypothesis on $\p$. Let $(\p_j)$ be a sequence of continuous functions in 
$\psh(X,\omega)$ decreasing to $\p$ on $X$. Let $h_j:=h_{\f,\p_j,E}^*$ be the 
relative $(\f,\p_j)$-extremal function of $K$. Then $h_j$ decreases to $h$, hence $\MA(h_j)$ converges weakly to $\MA(h)$. 
Denote by $V:=\{h<\p\}\setminus \bar{E}$. Now, fix $\varepsilon>0$ and $U$ an open subset of $X$ such that 
$$
\Capa_{\omega}\left[(U\setminus V) \cup (V\setminus U)\right]\leq \varepsilon .
$$
From the first step we know that $\MA(h_j)$ vanishes on $V$. Thus
\begin{eqnarray*}
\int_V \MA(h) &\leq & \int_U \MA(h) + F(\varepsilon) \\
&\leq & \liminf_{j\to+\infty} \int_U \MA(h_j) + F(\varepsilon) \\
&\leq & \liminf_{j\to+\infty} \int_V \MA(h_j) +2 F(\varepsilon) \\
&=& 2 F(\varepsilon),
\end{eqnarray*}
It suffices now to let $\varepsilon\to 0$ since  $\lim_{\vep \to 0}F(\varepsilon)=0$ thanks to Theorem \ref{thm: comparison of Capacity}.
\end{proof}

\begin{lem}\label{lem: the easy inequality for cap formula}
Let $E\subset X$ be a Borel subset and $h_E:=h^*_{\f,\psi,E}$ be its relative $(\f,\p)$-extremal function. Then we have
$$
\Capfp (E) \leq \int_{\{h_E<\p\}} \MA(h_E) .
$$
\end{lem}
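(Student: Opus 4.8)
The plan is to bound $\Capfp(E)$ by testing the definition against competitors $u$ with $\f \le u \le \p$ and estimating $\int_E \MA(u)$ using the extremal function $h_E$. The key observation is that since $h_E = \f$ quasi-everywhere on $E$ (Proposition \ref{prop: properties}(i)), on $E$ we essentially have $u \ge \f = h_E$, so the set $E$ is contained, up to a pluripolar set, in $\{u \ge h_E\}$; this should let us compare $\int_E \MA(u)$ with an integral of $\MA(h_E)$ over a related set via a comparison-principle argument.

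More precisely, first I would fix $u \in \psh(X,\omega)$ with $\f \le u \le \p$ and note that $\{u < h_E\}$ is, modulo a pluripolar set, disjoint from $E$ (since $h_E = \f \le u$ q.e.\ on $E$). Then I would invoke the generalized comparison principle — valid because $\f, \p \in \EcX$ and hence $h_E \in \EcX$ as well (it lies between two functions of full mass, so it has full mass) — to write, for each $t \in (0,1)$,
\begin{equation*}
\int_{\{u < (1-t)h_E + t\p\}} \MA((1-t)h_E + t\p) \le \int_{\{u < (1-t)h_E + t\p\}} \MA(u).
\end{equation*}
The role of the convex combination with a small weight $t$ on $\p$ is to handle the boundary behavior: on $E$ one has $u \ge h_E$, but on $\{h_E = \p\}$ (which by Proposition \ref{prop: properties}(iv) forces pluripolarity if it is all of $E$, but in general is a genuine set) one needs the slack provided by $t\p$ to ensure $E \subset \{u < (1-t)h_E + t\p\}$ up to a pluripolar set, except precisely where $u = \p$. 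Then $\MA((1-t)h_E + t\p) \ge (1-t)^n \MA(h_E)$ by monotonicity of the Monge-Amp\`ere operator under addition of psh functions, and the left-hand set contains $E \setminus (\text{pluripolar})$, so
\begin{equation*}
(1-t)^n \int_{E} \MA(h_E) \le \int_{\{u < (1-t)h_E + t\p\}} \MA(u) \le \int_{\{h_E < \p\}} \MA(u) + (\text{error}),
\end{equation*}
and one lets $t \to 0$.

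The main obstacle I expect is the bookkeeping on $\{h_E = \p\}$ and the precise verification that $\int_E \MA(h_E)$ can be upgraded to $\int_{\{h_E < \p\}} \MA(h_E)$ with $\MA(u)$ on the right: one wants the inequality $\int_E \MA(u) \le \int_{\{h_E < \p\}} \MA(h_E)$, but the statement has $\MA(h_E)$ on the right throughout, so actually the cleanest route is probably to first show $\int_E \MA(u) \le \int_{\{h_E < \p\}} \MA(u)$ is \emph{false} in general and instead directly compare via the comparison principle that $\int_E \MA(u)$ is dominated by $\int_{\{h_E < \p\}} \MA(h_E)$. Here I would use that on the contact set $\{u = h_E\} \cap \{h_E < \p\}$ the measures $\MA(u)$ and $\MA(h_E)$ agree up to the usual subtleties, while on $E$ outside this contact set $u$ strictly exceeds $h_E$, and the comparison principle $\int_{\{u<h_E + \text{perturbation}\}}\MA(\cdot)$ transfers mass from $u$ to $h_E$. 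Controlling the pluripolar exceptional sets throughout — which is legitimate since $\MA(u)$, $\MA(h_E)$ put no mass on pluripolar sets for functions in $\EcX$ — and passing to the limit $t\to 0^+$ using the continuity of $\int \MA$ along decreasing sequences (Proposition \ref{prop: maximal}'s techniques, ultimately \cite{GZ07}) are the technical points to get right.
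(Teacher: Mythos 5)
There is a genuine gap: your central application of the comparison principle is oriented the wrong way. You invoke
$$
\int_{\{u < (1-t)h_E + t\p\}} \MA\bigl((1-t)h_E + t\p\bigr) \le \int_{\{u < (1-t)h_E + t\p\}} \MA(u),
$$
which bounds the Monge--Amp\`ere measure of (a perturbation of) $h_E$ by that of $u$; but the lemma requires the opposite, namely that $\int_E \MA(u)$ be dominated by an integral of $\MA(h_E)$. Accordingly your concluding chain reads $(1-t)^n\int_E\MA(h_E)\le\cdots\le\int_{\{h_E<\p\}}\MA(u)+(\text{error})$, which is the reverse of the desired inequality and cannot yield the lemma for any competitor $u$. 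Moreover, the inclusion that chain relies on, $E\subset\{u<(1-t)h_E+t\p\}$ modulo a pluripolar set, is false in general: quasi-everywhere on $E$ one has $(1-t)h_E+t\p=(1-t)\f+t\p$, and a competitor $u$ (for instance $u=\p$ itself) can exceed this on all of $E$. You sense the difficulty in your last paragraph, but the appeal to contact sets where $\MA(u)$ and $\MA(h_E)$ agree is not an argument.

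The fix is to perturb $u$, not $h_E$, and to use $h_E$ as the \emph{smaller} function in the comparison principle; this is exactly what the paper does. One first observes that the supremum defining $\Capfp(E)$ is unchanged if one restricts to competitors with $\f<u\le\p$ (replace $u$ by $(1-\epsilon)u+\epsilon\p$, which satisfies $\f<u_\epsilon\le \p$ quasi-everywhere since $\f<\p$ quasi-everywhere, and use $\MA(u_\epsilon)\ge(1-\epsilon)^n\MA(u)$). For such a $u$ one has $E\subset\{h_E<u\}\subset\{h_E<\p\}$ modulo a pluripolar set, because $h_E=\f<u$ quasi-everywhere on $E$ and $u\le\p$. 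A single application of the generalized comparison principle then gives
$$
\int_E\MA(u)\le\int_{\{h_E<u\}}\MA(u)\le\int_{\{h_E<u\}}\MA(h_E)\le\int_{\{h_E<\p\}}\MA(h_E),
$$
and taking the supremum over $u$ finishes the proof: no limit in $t$, no analysis on $\{h_E=\p\}$, and no contact-set argument is needed.
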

\begin{proof}
Observe first  that the $(\f,\p)$-capacity can be equivalently defined by
$$
\Capfp(E):= \sup\left\{ \int_E \MA (u) \ \vert \ u\in \psh(X, \omega) , \ \f < u\leq \p\right\}.
$$ 
For simplicity, set $h:=h_E$. Now take any $u\in \psh(X,\omega)$ such that $\f<u\leq \p$. Then
$$
E\subset \{h<u\}\subset \{h<\p\},
$$
where the first inclusion holds modulo a pluripolar set. The comparison principle for functions in 
$\Ec(X,\omega)$ (see \cite{GZ07}) yields
$$
\int_E MA(u)\leq \int_{\{h<u\}} MA(u)\leq \int_{\{h<u\}} MA(h)\leq \int_{\{h<\p\}} MA(h).
$$
By taking the supremum over all candidates $u$, we get the result.
\end{proof}

The following result says that the inequality in Lemma \ref{lem: the easy inequality for cap formula} is an equality if $E$ is a compact or open subset of $X$.

\begin{thm}\label{formula}
Let $E$ be an open (or compact) subset of $X$ and let 
$h_E:=h^*_{\f,\psi,E}$ be the $(\f,\p)$-extremal function of 
$E$. The  $(\f,\p)$-capacity  of $E$ is given by
$$
\Capip (E) = \int_{\{h_E<\p\}}  MA(h_E).
$$
\end{thm}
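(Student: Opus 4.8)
By Lemma~\ref{lem: the easy inequality for cap formula} we already have $\Capip(E)\le\int_{\{h_E<\p\}}\MA(h_E)$, so the plan is to establish the reverse inequality, first for compact $E$ and then for open $E$. \emph{Compact case.} Suppose $E=K$ is compact and write $h:=h_K$. Since $\bar K=K$, Proposition~\ref{prop: maximal} says that $\MA(h)$ vanishes on $\{h<\p\}\setminus K$, so
$$\int_{\{h<\p\}}\MA(h)=\int_{\{h<\p\}\cap K}\MA(h)\le\int_K\MA(h).$$
But $h$ is $\omega$-psh with $\f\le h\le\p$, hence $h$ is one of the competitors in the supremum defining $\Capip(K)$, so $\int_K\MA(h)\le\Capip(K)$; together with Lemma~\ref{lem: the easy inequality for cap formula} this gives the formula for $K$.

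\emph{Open case.} Let $E=U$ be open and choose compact sets $K_j\uparrow U$ (for instance $K_j=\{x:\mathrm{dist}(x,X\setminus U)\ge 2^{-j}\}$). By Proposition~\ref{prop: properties}(iii) the extremal functions $h_{K_j}:=h^*_{\f,\p,K_j}$ decrease to $h_U$; all of them lie in $\EcX$, being sandwiched between $\f,\p\in\EcX$, so $\MA(h_{K_j})\weak\MA(h_U)$ by continuity of the Monge--Amp\`ere operator along monotone sequences in $\EcX$ (\cite{GZ07}), and $\Capip(K_j)\uparrow\Capip(U)$ by monotone continuity of the capacity (one of the elementary properties of $\Capip$ listed above). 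Invoking the compact case, $\Capip(U)=\lim_j\int_{\{h_{K_j}<\p\}}\MA(h_{K_j})$, so it remains to show
$$\liminf_{j\to\infty}\int_{\{h_{K_j}<\p\}}\MA(h_{K_j})\ \ge\ \int_{\{h_U<\p\}}\MA(h_U).$$
Fix $m$. For $j\ge m$ we have $h_{K_j}\le h_{K_m}$, hence $\{h_{K_m}<\p\}\subset\{h_{K_j}<\p\}$; so it suffices to bound $\int_{\{h_{K_m}<\p\}}\MA(h_{K_j})$ from below, let $j\to\infty$, and then let $m\to\infty$, using that the sets $\{h_{K_m}<\p\}$ increase to $\{h_U<\p\}$ modulo a pluripolar set, which $\MA(h_U)$ ignores since $h_U\in\EcX$.

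\emph{The main obstacle.} If the set $\{h_{K_m}<\p\}$ were open, weak convergence would give at once $\liminf_j\MA(h_{K_j})(\{h_{K_m}<\p\})\ge\MA(h_U)(\{h_{K_m}<\p\})$ and we would be done; this is the case when $\p$ is continuous, because $h_{K_m}$ is upper semicontinuous. For a general $\p$ the set $\{h_{K_m}<\p\}$ need not be open, and this is where care is needed. I would resolve it exactly as in the last paragraph of the proof of Proposition~\ref{prop: maximal}: using quasicontinuity of $\omega$-psh functions, choose for each $m$ an open set $G_m$ with $\Capo\big[(G_m\setminus\{h_{K_m}<\p\})\cup(\{h_{K_m}<\p\}\setminus G_m)\big]$ as small as desired, apply the portmanteau inequality to the \emph{open} set $G_m$, and absorb the errors through the estimate $\MA(v)(A)\le\Capip(A)\le F(\Capo(A))$ --- valid for every admissible $v$ (in particular $v=h_{K_j}$ and $v=h_U$) and every Borel $A$ after translating so that $\p\le 0$, where $F$ is the gauge of Theorem~\ref{thm: comparison of Capacity} and $F(0^+)=0$. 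This yields $\liminf_j\int_{\{h_{K_j}<\p\}}\MA(h_{K_j})\ge\MA(h_U)(\{h_{K_m}<\p\})-o(1)$, and letting $m\to\infty$ completes the proof. The only delicate point is this bookkeeping with sets of small capacity; everything else is soft, resting on Propositions~\ref{prop: maximal} and~\ref{prop: properties} together with monotone convergence.
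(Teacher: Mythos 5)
Your proof is correct and follows the same overall strategy as the paper's: the easy inequality from Lemma \ref{lem: the easy inequality for cap formula}, the reverse inequality for compacts via Proposition \ref{prop: maximal}, and an exhaustion by compacts for open sets with the quasi-continuity/capacity bookkeeping. The one place where you genuinely diverge is the compact case: the paper first approximates $\p$ from above by \emph{continuous} $\omega$-psh functions $\p_j$, proves the identity for each $\Capa_{\f,\p_j}$, and then passes to the limit in $j$ (which requires yet another round of the $F(\Capo)$-absorption trick), whereas you apply Proposition \ref{prop: maximal} directly to the given $\p$ and close the loop with the one-line chain $\Capip(K)\le\int_{\{h<\p\}}\MA(h)\le\int_K\MA(h)\le\Capip(K)$, the last inequality holding because $h$ is itself a competitor. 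Since Proposition \ref{prop: maximal} is stated and proved for arbitrary $\p$ (the continuity hypothesis is removed in the second half of its proof), your shortcut is legitimate and in fact cleaner: it eliminates one limiting procedure entirely. Your treatment of the open case --- identifying the non-openness of $\{h_{K_m}<\p\}$ as the only delicate point and resolving it via quasicontinuity of $\p$ together with the bound $\MA(v)(A)\le\Capip(A)\le F(\Capo(A))$ from Theorem \ref{thm: comparison of Capacity} --- is exactly the paper's argument.
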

\begin{proof}
From Lemma \ref{lem: the easy inequality for cap formula} above we get one inequality. We now prove the opposite one. Set $h:=h_E$. Assume first that $E$ is a compact subset of $X.$ 
Let  $(\p_j)$ be a sequence of continuous $\omega$-psh functions decreasing to $\p$. Denote by $h_j:=h_{\f,\p_j,E}^*$. It is easy to check that  $h_j$ decreases to $h$ and that
$\Capipj (E)$ decreases to $\Capip (E)$. Since $h_j$ is a candidate defining the $(\f,\p_j)$-capacity of $E$, it follows from Proposition \ref{prop: maximal} 
and Lemma \ref{lem: the easy inequality for cap formula} that 
\begin{equation}\label{eq: 10h 47 16 Juin 2013}
\Capipj (E)= \int_{\{h_j<\p_j\}}  MA(h_j)=  \int_{E}  MA(h_j).
\end{equation}
Fix $j_0\in \N.$ Since $h_j\leq h_{j_{0}}$ and $\p\leq \p_j$, for any $j>j_{0}$
$$
\int_{\{h_j<\p_j\}}  MA(h_j) \geq \int_{\{h_{j_0}<\p\}}  MA(h_j).
$$
Fix $\varepsilon >0$ and replacing $\p$ by a continuous function $\tilde{\p}$ such that $\Capa_\omega(\{\tilde{\p}\neq \p\})<\varepsilon$. Arguing as in the proof of Proposition \ref{prop: maximal} we get
\begin{eqnarray*}
\liminf_{j\rightarrow +\infty} \int_{\{h_{j_0}<\p\}}  MA(h_j)\geq \int_{\{h_{j_0}<\p\}}  MA(h).
\end{eqnarray*}
Taking the limit for $j\rightarrow +\infty$ in (\ref{eq: 10h 47 16 Juin 2013}) we get
$$
\Capip (E)\geq \int_{\{h<\p\}}  MA(h).
$$ 
We now assume that $E\subset X$ is an open set. Let $(K_j)$ be a sequence of compact subsets increasing to $E$. Then clearly 
$h_j:=h^*_{\f,\p,K_j}\searrow h$ and $\Capip(K_j)\nearrow \Capip(E) $. We have already proved that $\Capip(K_j)\geq \int_{\{h_j<\p\}}  MA(h_j)$. 
For each fixed $k\in \N$, we have 
$$
\liminf_{j\to+\infty}\int_{\{h_j<\p\}} MA(h_j)\geq \liminf_{j\to+\infty}\int_{\{h_k<\p\}} MA(h_j)\geq \int_{\{h_k<\p\}} MA(h).
$$
Then letting $k\to+\infty$ and using the first part of the proof we get
$$
\liminf_{j\to+\infty} \Capfp(K_j) \geq \int_{\{h<\p\}} MA(h) .
$$
On the other hand, it is clear that 
$\lim_{j\to +\infty} \Capfp (K_j)= \Capfp (E)$, and hence 
$$
\Capfp (E)\geq \int_{\{h<\p\}} MA(h).
$$ 
\end{proof}

Now we want to give a formula for the outer $(\f,\p)$-capacity. Assume that $E$ is a Borel subset of $X$. We introduce an auxiliary function 
\begin{equation}\label{eq: auxiliary function}
\phi:=\phi_{\f,\p,E}=\begin{cases} \frac{-h_{\f,\p,E}^*+\p}{-\f+\p} \quad {\rm if}\ \f>-\infty \\ \;0 \quad\quad\quad {\rm if}\  \f=-\infty  \end{cases}  .
\end{equation}
Observe that $\phi$ is a quasicontinuous function, $0\leq \phi\leq 1$ and $\phi=1$ quasi everywhere on $E$.
\begin{thm}\label{thm: cap formula varphi psi}
Let  $E\subset X$ be a Borel subset and denote by $h_E:=h^*_{\f,\p,E}$ the $(\f,\p)$-extremal function of $E$. Then 
$$
\Capfp^*(E)=\int_{\{h_E<\p\}} \MA(h_E)=\int_{X} \left( \frac{-h_E+\p}{-\f+\p} \right)\, \MA(h_{E}).
$$
%where $\phi:=\phi_{\psi,\f,E}$ is defined by (\ref{eq: auxiliary function}).
\end{thm}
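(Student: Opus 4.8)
The plan is to deduce the identity for an arbitrary Borel set from the open and compact cases, which are already settled by Theorem \ref{formula}, by exhausting $E$ from inside and from outside and passing to the limit. The only nontrivial analytic input beyond monotone convergence of the Monge--Amp\`ere operator (\cite{GZ07}) is the quantitative domination $\Capfp\le F(\Capo)$ with $\lim_{0^+}F=0$ furnished by Theorem \ref{thm: comparison of Capacity}; as in the proof of Proposition \ref{prop: maximal}, this is precisely what allows one to replace the quasi-open level sets of the quasicontinuous function $\phi:=\phi_{\f,\p,E}$ of \eqref{eq: auxiliary function} by genuinely open sets and to control weak limits of Monge--Amp\`ere masses, at the cost of an error $F(\vep)\to0$. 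Throughout set $h:=h_E=h^*_{\f,\p,E}$; since $\f\le h\le\p$ with $\f,\p\in\EcX$ we have $h\in\EcX$, so $\MA(h)$ charges no pluripolar set.

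I would first make two reductions. The two integrals on the right coincide: $\MA(h)$ ignores the pluripolar set $\{\f=-\infty\}$, off which $(-h+\p)/(-\f+\p)$ equals $\phi$, and $\phi\equiv 0$ on $\{h=\p\}$, so $\int_X\bigl(\tfrac{-h+\p}{-\f+\p}\bigr)\MA(h)=\int_{\{h<\p\}}\phi\,\MA(h)$; their equality with $\int_{\{h<\p\}}\MA(h)$ thus amounts to $\phi=1$ --- i.e.\ $h=\f$ --- holding $\MA(h)$-almost everywhere on $\{h<\p\}$. Next, $\Capfp$ is a Choquet capacity (monotone, continuous along increasing sequences, and outer regular on compacta, the last point via Theorem \ref{thm: comparison of Capacity} and the corresponding property of $\Capo$), so the capacitability theorem gives $\Capfp^*(E)=\Capfp(E)$ for Borel $E$; combining this with Lemma \ref{lem: the easy inequality for cap formula} yields $\Capfp^*(E)\le\int_{\{h<\p\}}\MA(h)$, while using that $h$ is admissible in the definition of $\Capfp(E)$ and that $E\subset\{h<\p\}$ modulo a pluripolar set gives $\Capfp^*(E)\ge\int_E\MA(h)=\int_{E\cap\{h<\p\}}\MA(h)$. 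Hence everything reduces to the single assertion
\[
\MA(h)\bigl(\{h<\p\}\setminus E\bigr)=0 .
\]

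This last identity is the heart of the matter and the step I expect to be the main obstacle. Proposition \ref{prop: maximal} already shows that $\MA(h)$ vanishes on $\{h<\p\}\setminus\overline{E}$, so only mass on the boundary set $(\overline{E}\setminus E)\cap\{h<\p\}$ has to be excluded. I would attack this either by a local barrier argument in the spirit of Proposition \ref{prop: maximal} --- around $x_0\in(\overline E\setminus E)\cap\{h<\p\}$ a small ball $B$ necessarily meets $E$, but the ``bad'' part of $B\cap E$ that obstructs the auxiliary Dirichlet problem carries $\Capfp$-mass at most $F(\Capo(\overline B\cap\overline E))$, which tends to $0$ as $B$ shrinks to $x_0$ --- or, and perhaps more cleanly, by squeezing $E$ between compact subsets $K_j\subset E$ and open supersets $U_j\supset E$ realizing the inner and the outer capacity and applying the \emph{equality} case of Theorem \ref{formula} to each $U_j$ and each $K_j$ (where, by Proposition \ref{prop: maximal}, the relevant Monge--Amp\`ere measures are carried by $\overline{U_j}$, resp.\ $K_j$), then comparing the $K_j$-, $E$-, and $U_j$-quantities and absorbing the discrepancies via $F(\vep)\to0$. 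The delicate bookkeeping here --- that the extremal functions of the approximating sets converge to $h$ only monotonically and quasi-everywhere, and that the superlevel sets $\{\,\cdot<\p\,\}$ shift along the limit by sets of small $\Capo$-capacity --- has to be handled exactly as in Proposition \ref{prop: maximal}; once it is, letting $\vep\to0$ gives the boundary vanishing, and with it all three equalities in the statement.
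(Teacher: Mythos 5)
Your reduction funnels the whole theorem into the single claim $\MA(h_E)\bigl(\{h_E<\p\}\setminus E\bigr)=0$, and that claim is false in general — already for open sets. Take the classical case $\f=\p-1$, $\p=0$, and $E$ a small open ball: the relative extremal function $h_E$ equals $\f$ on $\overline{E}$ and its Monge--Amp\`ere measure is concentrated on $\partial E$, which is contained in $\{h_E<\p\}\setminus E$. What is actually true (and what the theorem amounts to) is the weaker statement $\MA(h_E)\bigl(\{\f<h_E<\p\}\bigr)=0$: the mass may well live outside $E$, but only where $h_E$ has already dropped to $\f$. This matters because your lower bound $\Capfp^*(E)\ge\int_E\MA(h_E)$, obtained by testing against the single candidate $h_E$, is then genuinely too small (in the ball example $\int_E\MA(h_E)=0$ while $\Capfp(E)>0$); the paper never gets the lower bound this way, but always by approximation — from inside by compacts for open sets (Theorem \ref{formula}), and for general Borel sets by a specific outer approximation. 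Your second suggested attack inherits the same problem in a different guise: for compacts $K_j\subset E$ and open $U_j\supset E$ realizing the inner and outer capacities, the extremal functions $h_{K_j}^*$ and $h_{U_j}^*$ need not converge to $h_E$ at all, so there is nothing to compare the level sets and masses against.

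The device you are missing is the paper's choice of approximating sets. By Choquet's lemma one picks $u_j\nearrow h_E$ with $u_j\le\p$ and $u_j=\f$ on $E$, and sets $E_j:=O_j\cap\{u_j<\f+1/j\}$ where $O_j$ are open sets computing $\Capfp^*(E)$. These quasi-open sets contain $E$, are of the form $\{u<v\}\cap G$ handled by Lemma \ref{lem3}, and — crucially — satisfy $u_j-1/j\le h_{E_j}^*\le h_E$, which forces $h_{E_j}^*\nearrow h_E$; one then passes to the limit in $\int_X\phi_j\,\MA(h_{E_j}^*)$ via Lemma \ref{lem1}. This sidesteps both of your problematic steps: no identity $\Capfp^*(E)=\Capfp(E)$ for general Borel $E$ is needed (so the unverified Choquet-capacitability hypotheses — in particular continuity of the \emph{outer} capacity along increasing sequences of arbitrary sets — never enter), and the lower bound comes from the convergence $\int_X\phi_j\,\MA(h_{E_j}^*)\to\int_X\phi\,\MA(h_E)$ rather than from evaluating $\MA(h_E)$ on $E$ itself. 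As written, your argument has a gap that cannot be repaired within its own framework.
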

To prove Theorem \ref{thm: cap formula varphi psi}  we need the following results.

\begin{lem}\label{lem1}
Let $(u_j)$ be a bounded monotone sequence of quasi-continuous functions converging to $u$.  Let $\chi$ be a convex weight and $\{\varphi_j\}\subset \Ec_{\chi}(X,\omega)$ be a monotone sequence converging to $\varphi\in  \Ec_{\chi}(X,\omega)$. Then
$$
\int_{X} u_j\, \MA(\varphi_j) \xrightarrow[j\rightarrow +\infty]{}  \int_{X} u\, \MA(\varphi). 
$$
\end{lem}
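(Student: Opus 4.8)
The plan is to reduce the statement to two facts about the sequence $(\varphi_j)$: that $\MA(\varphi_j)$ converges weakly to $\MA(\varphi)$, and that the measures $\MA(\varphi_j)$ are \emph{uniformly} dominated by the Monge-Amp\`ere capacity. The first is the monotone convergence theorem for the complex Monge-Amp\`ere operator in $\Ec_\chi(X,\omega)$ of \cite{GZ07}: along a monotone sequence $\varphi_j\to\varphi$ in $\Ec_\chi(X,\omega)$ one has $\MA(\varphi_j)\weak\MA(\varphi)$ as positive measures, all of total mass $\int_X\omega^n$, so in particular $\int_X g\,\MA(\varphi_j)\to\int_X g\,\MA(\varphi)$ for every $g\in\Cc^0(X)$.

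For the uniform domination I would first subtract a constant so that $\varphi_j\le0$ and $\varphi\le0$ for all $j$ (this keeps every function in $\Ec_\chi(X,\omega)$ and changes neither $\MA(\varphi_j)$ nor the integrals in the statement). Set $g_0:=\big(\inf_j\varphi_j\big)-1$; since $\inf_j\varphi_j$ equals $\varphi$ or $\varphi_1$ according as $(\varphi_j)$ decreases or increases, $g_0$ lies in $\Ec_\chi(X,\omega)$, and $g_0<0$ with $g_0\le\varphi_j\le0$ for all $j$. Because $\chi$ is convex with $\chi(0)=0$, convexity gives $|\chi(t)|\le(-t)\,|\chi(-1)|$ for $t\le-1$, so \eqref{eq: the two weights} holds with $q=1$; hence Theorem \ref{thm: comparison of Capacity}, applied with this $\chi$, with $q=1$ and any $p<1/2$, yields a function $F\colon\R^+\to\R^+$ with $\lim_{t\to0^+}F(t)=0$ such that $\Capa_{g_0,0}(E)\le F(\Capo(E))$ for all $E\subset X$. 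Since $g_0\le\varphi_j\le0$, each $\varphi_j$ (and $\varphi$) is an admissible competitor in the supremum defining $\Capa_{g_0,0}$, whence
$$
\sup_{j}\,\MA(\varphi_j)(E)\ \le\ \Capa_{g_0,0}(E)\ \le\ F(\Capo(E)),\qquad \MA(\varphi)(E)\le F(\Capo(E)),
$$
for every $E\subset X$.

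Granting these, I would next upgrade the weak convergence to a fixed bounded \emph{quasi-continuous} test function $g$: given $\vep>0$, choose an open set $U$ with $\Capo(U)$ so small that $F(\Capo(U))<\vep$ and $g|_{X\setminus U}$ continuous, extend $g|_{X\setminus U}$ by Tietze to $\tilde g\in\Cc^0(X)$ with $\|\tilde g\|_\infty\le\|g\|_\infty$, and estimate $\big|\int_X(g-\tilde g)\,d\nu\big|\le2\|g\|_\infty\,\nu(U)\le2\|g\|_\infty\,F(\Capo(U))$ for $\nu=\MA(\varphi_j)$ and $\nu=\MA(\varphi)$; combined with $\int_X\tilde g\,\MA(\varphi_j)\to\int_X\tilde g\,\MA(\varphi)$ this gives $\limsup_j\big|\int_X g\,(\MA(\varphi_j)-\MA(\varphi))\big|\le4\|g\|_\infty\vep$, hence $\int_X g\,\MA(\varphi_j)\to\int_X g\,\MA(\varphi)$. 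Finally I would conclude for the varying sequence $(u_j)$ by a monotonicity sandwich. Normalize $0\le u_j\le1$ and suppose $u_j\searrow u$ (the increasing case is symmetric). For $j\ge k$ one has $\int_X u_j\,\MA(\varphi_j)\le\int_X u_k\,\MA(\varphi_j)$, and the previous step applied to the fixed bounded quasi-continuous function $u_k$ gives $\int_X u_k\,\MA(\varphi_j)\to\int_X u_k\,\MA(\varphi)$; letting $j\to+\infty$ and then $k\to+\infty$ (using bounded monotone convergence for the fixed finite measure $\MA(\varphi)$) yields $\limsup_j\int_X u_j\,\MA(\varphi_j)\le\int_X u\,\MA(\varphi)$. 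For the reverse inequality, $u_j\ge u$ gives $\int_X u_j\,\MA(\varphi_j)\ge\int_X u\,\MA(\varphi_j)$, and the previous step applied to the bounded quasi-continuous function $u$ gives $\liminf_j\int_X u_j\,\MA(\varphi_j)\ge\int_X u\,\MA(\varphi)$; combining the two finishes the proof.

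The step I expect to be the real obstacle is the uniform control $\sup_j\MA(\varphi_j)(E)\le F(\Capo(E))$ with $F$ independent of $j$ and vanishing at $0$ --- this is exactly why the lemma is placed after Theorem \ref{thm: comparison of Capacity}, everything else being soft (weak convergence along monotone sequences, Tietze extension, bounded convergence). One minor point to flag is that quasi-continuity is used not only for the $u_j$ but also for the limit $u$; this holds in all the applications of the lemma, where $u$ is manifestly quasi-continuous.
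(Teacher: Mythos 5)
Your proof is correct and follows essentially the same route as the paper's: both rest on Theorem \ref{thm: comparison of Capacity} to dominate all the measures $\MA(\varphi_j)$ uniformly by $F(\Capo)$, then exploit quasi-continuity to replace the test functions by continuous ones off an open set of small capacity and conclude by weak convergence of $\MA(\varphi_j)$ against continuous functions. The only cosmetic difference is in handling the double limit: the paper invokes Dini's theorem (uniform convergence of the continuous replacements on the good compact set), whereas you use a monotonicity sandwich against a fixed $u_k$; both devices work, and your explicit remark that quasi-continuity of the limit $u$ is also needed applies equally to the paper's use of Dini.
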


\begin{proof}
Fix $\varepsilon>0.$ Let $U$ be an open subset of $X$ with $\Capa_{\omega}(U)<\varepsilon$ and $v_j, v$ be continuous functions on $X$ such that $v_j\equiv u_j$ and $v\equiv u$ on $K:=X\setminus U.$ By Theorem \ref{thm: comparison of Capacity} (and by letting $\varepsilon\to 0$) it suffices to prove that 
$$
\int_X v_j\, \MA(\varphi_j) \xrightarrow[j\rightarrow +\infty]{}  \int_{X} v\, \MA(\varphi)\, .
$$
From Dini's theorem $v_j$ converges uniformly to $v$ on $K$. Thus, using again Theorem \ref{thm: comparison of Capacity}, the problem reduces to proving that 
$$
\int_X v\, \MA(\varphi_j) \xrightarrow[j\rightarrow +\infty]{}  \int_{X} v\, \MA(\varphi)\, .
$$
But the latter obviously follows since $v$ is continuous on $X$. The proof is thus complete.
\end{proof}

\begin{prop}\label{prop: formula compact open varphi psi}
Let $E$ be a compact or open subset of $X$ and let 
$h_E:=h^*_{\f,\p,E}$ denote the $(\f,\p)$-extremal function of $E$. Then
$$
\Capip(E)=\int_{\{h_E<\p\}}\MA(h_E)=\int_X \left( \frac{-h_E+\p}{-\f+\p} \right) \, \MA(h_E).
$$
\end{prop}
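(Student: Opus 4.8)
The plan is to build on Theorem \ref{formula}, which already gives $\Capip(E)=\int_{\{h_E<\p\}}\MA(h_E)$ for $E$ open or compact, so that only the identity $\int_{\{h_E<\p\}}\MA(h_E)=\int_X \phi\,\MA(h_E)$ remains, where $\phi=\phi_{\f,\p,E}$ is the auxiliary function of \eqref{eq: auxiliary function}. First I would observe that $\phi\equiv 0$ on $\{h_E=\p\}$ (on $\{\f>-\infty\}$ because $-h_E+\p=0$ there, and on the pluripolar set $\{\f=-\infty\}$ by definition), so that $\int_X\phi\,\MA(h_E)=\int_{\{h_E<\p\}}\phi\,\MA(h_E)$ and the desired equality becomes $\int_{\{h_E<\p\}}(1-\phi)\,\MA(h_E)=0$. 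Since $1-\phi=(h_E-\f)/(\p-\f)$ takes values in $[0,1]$ on $\{\f>-\infty\}$, while $\{\f=-\infty\}$ is pluripolar and hence $\MA(h_E)$-negligible (here $h_E\in\EcX$ because $\f\le h_E\le\p$ with $\f,\p\in\EcX$, so $\MA(h_E)$ puts no mass on pluripolar sets), this reduces to showing that $h_E=\f$ holds $\MA(h_E)$-almost everywhere on $\{h_E<\p\}$.

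For $E$ compact this is immediate: by Proposition \ref{prop: maximal} the measure $\MA(h_E)$ vanishes on $\{h_E<\p\}\setminus\bar{E}=\{h_E<\p\}\setminus E$, while $h_E=\f$ quasi everywhere on $E$ by Proposition \ref{prop: properties}(i); combined with the fact that $\MA(h_E)$ charges no pluripolar set, this gives the claim, hence the proposition for compact $E$.

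For $E$ open I would argue by exhaustion. Choose compact sets $K_j\nearrow E$; then $h_{K_j}\searrow h_E$ by Proposition \ref{prop: properties}(iii), and $\Capip(K_j)\nearrow\Capip(E)$ since $\bigcup_jK_j=E$ and the $(\f,\p)$-capacity is continuous along increasing sequences of Borel sets. By the compact case we have $\Capip(K_j)=\int_X\phi_j\,\MA(h_{K_j})$, where $\phi_j:=\phi_{\f,\p,K_j}=(-h_{K_j}+\p)/(-\f+\p)$; as $h_{K_j}\searrow h_E$, the $\phi_j$ are quasi-continuous, bounded in $[0,1]$, and increase to $\phi$. Since $\f,\p\in\EcX$, by \cite[Proposition 2.2]{GZ07} there is a convex increasing weight $\chi$ with $\f,\p\in\Ec_{\chi}(X,\omega)$, and then $h_E$ and all the $h_{K_j}$, being sandwiched between $\f$ and $\p$, also lie in $\Ec_{\chi}(X,\omega)$. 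Lemma \ref{lem1} therefore applies and yields $\int_X\phi_j\,\MA(h_{K_j})\to\int_X\phi\,\MA(h_E)$; passing to the limit in $\Capip(K_j)=\int_X\phi_j\,\MA(h_{K_j})$ gives $\Capip(E)=\int_X\phi\,\MA(h_E)$, which together with Theorem \ref{formula} identifies all three quantities.

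The main obstacle is the open case. Unlike for compact $E$, one cannot conclude pointwise because $\MA(h_E)$ may charge $\partial E$, where there is no reason for $h_E$ to equal $\f$; the compact exhaustion circumvents this, at the cost of having to verify the hypotheses of Lemma \ref{lem1} — in particular that $h_E$ and all the $h_{K_j}$ sit in one common finite-energy class $\Ec_{\chi}(X,\omega)$, which uses the standard stability of weighted energy classes under sandwiching between two functions of $\Ec_{\chi}(X,\omega)$.
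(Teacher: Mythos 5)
Your proof is correct and follows essentially the same route as the paper: Theorem \ref{formula} for the first equality, Proposition \ref{prop: maximal} together with $h_E=\f$ quasi everywhere on $E$ for the compact case, and a compact exhaustion combined with Lemma \ref{lem1} for the open case. (Minor remark: you correctly observe that $\phi_j$ \emph{increases} to $\phi$ since $h_{K_j}$ decreases; the paper writes $\phi_j\searrow\phi$, which appears to be a typo, but Lemma \ref{lem1} covers both monotone directions.)
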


\begin{proof}
The first equality has been proved in Theorem \ref{formula}. 
Set $h:=h_E$ and $\phi:=\phi_{\f,\p,E}=\frac{-h_E+\p}{-\f+\p}$. Observe that
$\{h<\p\}=\{\phi>0\}$ modulo a pluripolar set and $\phi\leq 1.$ Thus 
$$
\int_{\{h<\p\}}\MA(h)\geq \int_X \phi \, \MA(h).
$$ 
Assume that $E$ is compact. By Proposition \ref{prop: maximal} and Theorem \ref{formula} we have 
$$
\Capip(E)=\int_{E}\MA(h).
$$ 
Since $\phi = 1$ quasi everywhere on $E$ we obtain
$$
\int_{E}\MA(h)\leq \int_X \phi\, \MA(h).
$$ 
We assume now that $E\subset X$ is an open subset. Let $(K_j)$ be a sequence of compact subsets increasing to $E$.  Then
$$
\Capip(E)=\lim _{j\rightarrow +\infty} \Capip(K_j)
=\lim _{j\rightarrow +\infty} \int_X \phi_j\, \MA(h_j) ,
$$
where $h_j:= h_{\f,\p, K_j}^*$ and $\phi_j:= \phi_{\f,\p,K_j}$ is defined by (\ref{eq: auxiliary function}). Since $\phi_j$ is quasicontinuous  for any $j$ and $\phi_j\searrow \phi$, the conclusion follows from Lemma \ref{lem1}.
 \end{proof}

\begin{lem}\label{lem3}
Let $u,v$ be $\omega$-plurisubharmonic functions. Let $G\subset X$ be an open subset. Set $E=\{u<v\}\cap G$ and $h_E:=h_{\f,\p,E}^*$. Then
$$
\Capfp^*(E)=\Capfp(E)=\int_{\{h_E<\p\}}\MA(h_E)=\int_X \left( \frac{-h_E+\p}{-\f+\p} \right)\, \MA(h_E).
$$
\end{lem}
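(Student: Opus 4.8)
The plan is to reduce to the already settled case of open and compact sets, exploiting that, although $E=\{u<v\}\cap G$ is in general neither open nor closed, it is ``quasi-open''. Since $u,v\in\pshXo$ they are quasicontinuous, so for each $k\in\N$ there is an open $U_k$ with $\Capo(U_k)<1/k$ on whose complement both $u$ and $v$ are continuous; on the closed set $X\setminus U_k$ the set $\{u<v\}$ is then relatively open, hence $\{u<v\}\cap(X\setminus U_k)=V_k\cap(X\setminus U_k)$ for some open $V_k\subseteq X$. I would put
$$
O_k:=E\setminus U_k=(V_k\cap G)\cap(X\setminus U_k),\qquad W_k:=(V_k\cap G)\cup U_k,
$$
so that $O_k\subseteq E\subseteq W_k$, $W_k$ is open, $O_k$ is locally closed (hence $F_\sigma$), and $W_k\setminus O_k\subseteq U_k$; in particular $\Capo(E\setminus O_k)<1/k$ and $\Capo(W_k\setminus E)<1/k$. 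Since $\bigcup_k O_k=E\setminus P$ with $P:=\bigcap_k U_k$ satisfying $\Capo^*(P)\le\inf_k\Capo(U_k)=0$, the set $P$ is pluripolar and $E\setminus P$ is an $F_\sigma$; I would fix compacts $L_l\nearrow E\setminus P$.

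The identity $\Capfp^*(E)=\Capfp(E)$ then drops out of the outer sets: by subadditivity of $\Capfp$ and Theorem \ref{thm: comparison of Capacity} (in the form $\Capfp\le F\circ\Capo$ with $F(0^+)=0$, valid for an arbitrary upper weight after translating $\f,\p$ so that $\p\le 0$),
$$
\Capfp^*(E)\le\Capfp(W_k)\le\Capfp(E)+\Capfp(W_k\setminus E)\le\Capfp(E)+F\big(\Capo(W_k\setminus E)\big)\xrightarrow[k\to\infty]{}\Capfp(E),
$$
the reverse inequality being trivial. Running the same estimate with $O_k\subseteq E$ in place of $W_k$ gives $\Capfp(O_k)\to\Capfp(E)$, hence $\Capfp(L_l)\nearrow\Capfp(E)$ by continuity of $\Capfp$ along increasing sequences together with subadditivity and Theorem \ref{thm: pluripolar}; likewise, by Proposition \ref{prop: properties}(ii)--(iii), $h_{L_l}\searrow h_{E\setminus P}=h_E$, where $h_{L_l}:=h^*_{\f,\p,L_l}$.

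It remains to pass to the limit along the \emph{inner} exhaustion $(L_l)$ — the outer sets $W_k$ are of no help here, since $h^*_{\f,\p,W_k}$ need not converge to $h_E$. For each compact $L_l$, Proposition \ref{prop: formula compact open varphi psi} gives
$$
\Capfp(L_l)=\int_{\{h_{L_l}<\p\}}\MA(h_{L_l})=\int_X\Big(\frac{-h_{L_l}+\p}{-\f+\p}\Big)\MA(h_{L_l}).
$$
Writing $\phi_{L_l},\phi_E$ for the auxiliary functions of \eqref{eq: auxiliary function}, the $\phi_{L_l}$ are quasicontinuous, $[0,1]$-valued, and increase to $\phi_E$; fixing a convex weight $\chi$ with $\f,\p\in\Ec_\chi(X,\omega)$ — so that $h_{L_l},h_E\in\Ec_\chi(X,\omega)$ too — Lemma \ref{lem1} yields $\int_X\phi_{L_l}\,\MA(h_{L_l})\to\int_X\phi_E\,\MA(h_E)$, and together with $\Capfp(L_l)\to\Capfp(E)$ this gives $\Capfp(E)=\int_X\big(\frac{-h_E+\p}{-\f+\p}\big)\MA(h_E)$. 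For the middle equality, ``$\le$'' is Lemma \ref{lem: the easy inequality for cap formula}; for ``$\ge$'', fix $m$ and, for $l\ge m$, use $\{h_{L_m}<\p\}\subseteq\{h_{L_l}<\p\}$ together with the weak convergence $\MA(h_{L_l})\weak\MA(h_E)$ and the $\Capo$-regularization argument from the proof of Proposition \ref{prop: maximal} to get $\Capfp(E)=\lim_l\Capfp(L_l)\ge\liminf_l\int_{\{h_{L_m}<\p\}}\MA(h_{L_l})\ge\int_{\{h_{L_m}<\p\}}\MA(h_E)$; since $\{h_{L_m}<\p\}\nearrow\{h_E<\p\}$ as $m\to\infty$, monotone convergence of $\MA(h_E)$ over this increasing family then gives $\Capfp(E)\ge\int_{\{h_E<\p\}}\MA(h_E)$.

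The step I expect to be the main obstacle is exactly this last limit. The point is that $E$ must be exhausted from the \emph{inside} (the outer open sets $W_k$ control the capacity but destroy the extremal function, so Theorem \ref{formula} cannot be applied directly), that the natural inner approximants $L_l$ are only $F_\sigma$, and that the domain of integration $\{h_{L_l}<\p\}$ varies with $l$ and is neither open nor closed — so the passage to the limit there cannot rely on bare weak convergence of measures but needs the quantitative comparison $\Capfp\le F\circ\Capo$ of Theorem \ref{thm: comparison of Capacity}.
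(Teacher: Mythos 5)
Your proof is correct, and its overall skeleton --- establish $\Capfp^*(E)=\Capfp(E)$ by a quasicontinuity plus capacity-comparison argument, then exhaust $E$ from inside by compacts and pass to the limit via Proposition \ref{prop: formula compact open varphi psi}, Lemma \ref{lem1} and the $\Capo$-regularization of the varying domain $\{h<\p\}$ --- is the same as the paper's. The one genuinely different ingredient is how the inner compact exhaustion is built. The paper exploits the specific form of $E$: taking continuous $u_j\searrow u$ and compacts $K_j\nearrow G$, the sets $E_j=\{u_j+1/j\le v\}\cap K_j$ are compact and increase \emph{exactly} to $E$, so no exceptional set ever appears and Proposition \ref{prop: properties}(iii) applies directly. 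You instead use quasicontinuity of both $u$ and $v$ to exhibit $E$ as quasi-open, extract an $F_\sigma$ set $E\setminus P$ with $P$ pluripolar, and then discharge $P$ via Proposition \ref{prop: properties}(ii) and Theorem \ref{thm: pluripolar}. Your route costs some extra bookkeeping but buys generality: it proves the statement for any quasi-open Borel set (any $E$ differing from open sets by sets of arbitrarily small $\Capo$), not just those of the form $\{u<v\}\cap G$, whereas the paper's argument is shorter because the continuous majorants $u_j$ do the work of quasicontinuity for free. Both proofs rest on the same two nontrivial limit devices --- Lemma \ref{lem1} for $\int_X \phi_j\,\MA(h_j)$ and the open-set weak-convergence $\liminf$ for $\int_{\{h_k<\p\}}\MA(h_j)$ --- and you correctly identified the latter as the delicate point.
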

\begin{proof}
We start showing the first identity. First, just by definition $\Capip^*(E)\geq\Capip(E)$. Fix $\varepsilon >0$. There exists a function $\tilde{v}\in \Cc(X)$ such that 
$$
\Capo(\{\tilde{v}\neq v\})< \varepsilon.
$$
Clearly $E\subset \left(\{u<\tilde{v}\}\cap G \right) \cup \{\tilde{v}\neq v\}$ and so, applying Theorem \ref{thm: comparison of Capacity} we get
\begin{eqnarray*}
\Capip^*(E)& \leq & \Capip(\{u<\tilde{v}\}\cap G) + F(\varepsilon)\\
& \leq & \Capip(E) + 2F(\varepsilon),
\end{eqnarray*}
where $F(\varepsilon)\to 0$ as $\varepsilon\to 0.$ 
Taking the limit as $\varepsilon\rightarrow 0$ we arrive at the first conclusion. 

Let now $\{K_j\}$ be a sequence of compact sets increasing to $G$ and $\{u_j\}$
be a sequence of continuous functions decreasing to $u$. 
Then $E_j= \{u_j+1/j\leq v\}\cap K_j$ is  compact  and $E_j\nearrow E$. Set 
$$
h:=h_{\f,\p,E}, \, \phi:=\frac{-h_E+\p}{-\f+\p}, \, h_j:=h_{\f,\p,E_j}^*, \, \phi_j:=\frac{-h_{E_j}+\p}{-\f+\p}.
$$ Observe that $h_j\searrow h$ and $\phi_j\searrow \phi$. 
By Proposition \ref{prop: formula compact open varphi psi} and Lemma \ref{lem1} we have
\begin{eqnarray*}
\Capip(E)&=& \lim _{j\rightarrow +\infty} \Capip(E_j)\\
&=&  \lim _{j\rightarrow +\infty}  \int_X \phi_j\, \MA(h_j)\\
&=& \int_X \phi\, \MA(h) \leq \int_{\{h<\p\}} \MA(h).
\end{eqnarray*}
Furthermore, for each fixed $k\in \N$, using Theorem \ref{thm: comparison of Capacity} we can argue as above to get
$$
\liminf_{j\to+\infty}\int_{\{h_j<\p\}}\MA(h_j)\geq \liminf_{j\to+\infty}\int_{\{h_k<\p\}}\MA(h_j) \geq \int_{\{h_k<\p\}}\MA(h) .
$$  
Letting $k\to+\infty$ and using Proposition \ref{prop: formula compact open varphi psi} again we get 
$$
\Capip(E)\geq \int_{\{h<\p\}}\MA(h) ,
$$
which completes the proof.
\end{proof}
We are now ready to prove Theorem \ref{thm: cap formula varphi psi}.
\begin{proof}
As usual, for simplicity, set $h:=h_E$. By definition of the outer capacity there is  a sequence  $(O_j)$ of open sets  decreasing to $E$ such that $\Capip^*(E)=\lim _{j\rightarrow +\infty}\Capip(O_j)$. Furthermore by Choquet's lemma there exists a sequence $(u_j)$ of $\omega$-psh functions such that 
$u_j\equiv \f$ quasi everywhere on $E$, $u_j\leq \p$ on $X$ and $u_j\nearrow h$. Since $\Capip^*$ vanishes on pluripolar sets (see Theorem \ref{thm: pluripolar}) we can assume that $u_j\equiv \f$ on $E$.  
For any $j$, we set $E_j= O_j\cap \{u_j<\f+1/j\}$ and $h_j:=h_{\f,\p,E_j}^*$. Then $(E_j)$ is a decreasing 
sequence of open subsets such that $E\subset E_j\subset O_j$ 
and $u_j-1/j\leq h_j\leq h$, thus $h_j\nearrow h$. 
Clearly $\Capip^*(E)=\lim _{j\rightarrow +\infty}\Capip(E_j)$. 
By Lemma \ref{lem3} and Lemma \ref{lem1} we get
$$
\lim _{j\rightarrow +\infty}\Capip^*(E_j)=\lim _{j\rightarrow +\infty}\Capip(E_j)= \lim _{j\rightarrow +\infty}\int_X \phi_j\, \MA(h_j)= \int_X \phi\, \MA(h),
$$
where $\phi_j:=\phi_{\f,\p,E_j}$ is defined by (\ref{eq: auxiliary function}).
\end{proof}

\begin{cor}
Let $K\subset X$ be a compact set and $(K_j)$ a sequence of compact subsets decreasing to $K$. Then
\begin{itemize}
\item[(i)] $\Capip^*(K)=\Capip(K) = \lim_{j\to+\infty}\Capip (K_j)$,
\item[(ii)] $h_{\f,\p,K_j}^*\nearrow h_{\f,\p,K}^*$.
\end{itemize}
\end{cor}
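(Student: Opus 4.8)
The plan is to prove $(ii)$ first and then deduce $(i)$ from it; the only genuinely delicate point is establishing the maximality, outside $K$, of the limiting extremal function.

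Since $K_{j+1}\subset K_j$ and $K\subset K_j$, the functions $h_j:=h_{\f,\p,K_j}^*$ form a non-decreasing sequence with $h_j\leq h:=h_{\f,\p,K}^*$; set $g:=(\lim_j h_j)^*$. By Proposition \ref{prop: properties}$(i)$ we have $\f\leq h_j\leq\p$ and $h_j=\f$ quasi everywhere on $K_j$, hence on $K$; letting $j\to\infty$ gives $\f\leq g\leq h\leq\p$ on $X$ and $g=\f$ quasi everywhere on $K$. In particular $g$ is admissible in the definition of $h$, re-confirming $g\leq h$, and $g,h,h_j$ all lie in $\Ec_\chi(X,\omega)$ for a convex weight $\chi$ adapted to $\f$ and $\p$ (as is used throughout this section), so that $\MA(h_j)\to\MA(g)$ weakly by the monotone-convergence theorem for the Monge-Amp\`ere operator (\cite[Theorem 2.6]{GZ07}).

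The heart of the argument is to show $\MA(g)\equiv 0$ on $\{g<\p\}\setminus K$. Cover the open set $X\setminus K$ by countably many balls $B$ with $\bar B\cap K=\emptyset$; for each such $B$ the compacts $\bar B\cap K_j$ decrease to $\emptyset$, so $\bar B\cap K_{j_0}=\emptyset$ for some $j_0$, and Proposition \ref{prop: maximal} then gives $\MA(h_j)=0$ on $\{h_j<\p\}\cap B\supset\{g<\p\}\cap B$ for all $j\geq j_0$ (using $h_j\leq g$). Passing to the weak limit forces $\MA(g)\big(\{g<\p\}\cap B\big)=0$; the fact that $\{g<\p\}$ is not open in general and that $\p$ is only $\omega$-psh is handled exactly as in the proof of Proposition \ref{prop: maximal}, i.e. by approximating $\p$ from above by continuous $\omega$-psh functions and approximating the set in $\Capo$, invoking Theorem \ref{thm: comparison of Capacity}. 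Summing over the balls, $\MA(g)\big(\{g<\p\}\setminus K\big)=0$. Consequently $\{h>g\}$ is $\MA(g)$-negligible: it meets $K$ only in a pluripolar set since $g=h$ quasi everywhere on $K$, it is disjoint from $\{g=\p\}$ since there $g=\p\geq h$, and the remainder lies in $\{g<\p\}\setminus K$. Thus $h\leq g$ holds $\MA(g)$-almost everywhere, and the domination principle in $\Ec(X,\omega)$ (reproved in Section 3; see also \cite{GZ07}) yields $h\leq g$. Hence $g=h$, which is precisely $(ii)$.

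For $(i)$, the equality $\Capip^*(K)=\Capip(K)$ is immediate from comparing Theorem \ref{thm: cap formula varphi psi} with Proposition \ref{prop: formula compact open varphi psi}, both of which evaluate to $\int_X\big(\frac{-h_K+\p}{-\f+\p}\big)\MA(h_K)$; and since $\phi_{\f,\p,K_j}\searrow\phi_{\f,\p,K}$ are quasi-continuous and $h_{\f,\p,K_j}^*\nearrow h_{\f,\p,K}^*$ in $\Ec_\chi$ by $(ii)$, Lemma \ref{lem1} and Proposition \ref{prop: formula compact open varphi psi} give $\Capip(K_j)=\int_X\phi_{\f,\p,K_j}\,\MA(h_j)\to\int_X\phi_{\f,\p,K}\,\MA(h)=\Capip(K)$. (Alternatively, the limit follows without $(ii)$: $\Capip(K_j)$ is non-increasing and $\geq\Capip(K)$, while any open $U\supset K$ contains some $K_{j_0}$ by compactness, so $\lim_j\Capip(K_j)\leq\Capip^*(K)=\Capip(K)$.) The main obstacle is the maximality statement $\MA(g)\equiv 0$ on $\{g<\p\}\setminus K$, which forces one to carry out, one level deeper, the double approximation already present in the proof of Proposition \ref{prop: maximal}; once this is in hand, identifying $g$ with $h$ via the domination principle is routine.
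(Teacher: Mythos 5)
Your proof is correct and follows essentially the same route as the paper: apply Proposition \ref{prop: maximal} to each $K_j$, pass to the weak limit via the continuous-$\p$ replacement controlled by Theorem \ref{thm: comparison of Capacity}, and upgrade $h\leq g$ from $\MA(g)$-almost everywhere to everywhere with the domination principle, then deduce (i) from Theorem \ref{formula}, Theorem \ref{thm: cap formula varphi psi} and Lemma \ref{lem1}. The only cosmetic difference is that you localize on balls missing $K$ (each of which eventually misses $K_j$) whereas the paper works directly with the sets $\{h_\infty<\p\}\setminus K_k$ and lets $k\to+\infty$ at the end; your compactness remark giving the limit in (i) without (ii) is a correct bonus.
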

\begin{proof}
The first equality in statement (i) comes straightforward from Theorem \ref{formula} and Theorem \ref{thm: cap formula varphi psi}. The second one follows from (ii) and Theorem \ref{thm: cap formula varphi psi}. It remains to  prove (ii). Since $(K_j)$ 
decreases to $K$, $h_j:=h_{\f,\p,K_j}^*$ increases to some 
$h_{\infty}\in \Ec(X,\omega)$. Clearly $h_{\infty}\leq h$. 
Thus we need to prove that $h_{\infty}\geq h$. 
Since $\{h_{\infty}<h\}\subset \{h_{\infty}<\p\}\setminus K$ 
modulo a pluripolar set,
$$ 
\int_{\{h_{\infty}<h\}} \MA(h_{\infty})\leq \int_{\{h_{\infty}<\psi\}\setminus K} \MA(h_{\infty}).
$$
From Proposition \ref{prop: maximal} we know that 
$$
\int_{\{h_j<\p\}\setminus K_j} \MA(h_j) =0,\, \forall j\in \N .
$$
Fix $\varepsilon >0$ and let $\p_{\varepsilon}\in \Cc(X)$ such that 
$\Capo(\{\p_{\varepsilon}\neq\p\})<\varepsilon$.  Then for each fixed 
$k\in \N$, we have
\begin{eqnarray*}
\int_{\{h_{\infty}<\p\}\setminus K_k} \MA(h_{\infty}) &\leq & 
\int_{\{h_{\infty}<\p_{\varepsilon}\}\setminus K_k} \MA(h_{\infty})+ F(\varepsilon)\\
&\leq & \liminf_{j\to+\infty}\int_{\{h_{\infty}<\p_{\varepsilon}\}\setminus K_k} \MA(h_j)+ F(\varepsilon)\\
&\leq & \liminf_{j\to+\infty}\int_{\{h_{\infty}<\p\}\setminus K_k} \MA(h_j)+ 2 F(\varepsilon)\\
&\leq & \liminf_{j\to+\infty}\int_{\{h_j<\p\}\setminus K_k} \MA(h_j)+
 2 F(\varepsilon)\\
&=& 2 F(\varepsilon) ,
\end{eqnarray*}
where $F(\varepsilon)\to 0$ as $\varepsilon\to 0$ thanks to Theorem \ref{thm: comparison of Capacity}. 
Thus, letting $\varepsilon\to 0$ then $k\to+\infty$ and using the domination principle below (Proposition \ref{prop: domination principle}) we can conclude that $h_{\infty}\geq h$.
\end{proof}

\subsection{Proof of Theorem A} Let us briefly resume the proof of Theorem A. Statements (i) and (ii) have been proved in Theorem \ref{thm: cap formula varphi psi} and Theorem \ref{thm: comparison of Capacity} respectively. One direction of the last staement has been proved in Theorem \ref{thm: pluripolar}. Now, if 
$E$ is a Borel subset of $X$ such that $\Capfp^*(E)=0$ then it follows from 
Theorem \ref{thm: cap formula varphi psi} that 
$$
\int_{\{h_{\f,\p,E}^*<\p\}} \MA(h_{\f,\p,E}^*) =0.
$$
We then can apply the domination principle (see \cite{BL12} or Proposition \ref{prop: domination principle} below for a proof)  to conclude.

\section{Another proof of the Domination Principle}
The following domination principle  was proved by Dinew using his uniqueness result \cite{Di09}, \cite{BL12}. As an application of the $(\f,\psi)$-Capacity we propose here an alternative proof.

\begin{prop}\label{prop: domination principle}
If $u, v\in \Ec(X,\omega)$ such that $u\leq v$ $MA(v)$-almost everywhere then $u\leq v$ on $X.$
\end{prop}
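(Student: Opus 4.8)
The plan is to reduce to a statement about pluripolar sets: it suffices to show that $E:=\{u>v\}$ is pluripolar, for then $u\le v$ outside a Lebesgue negligible set, so $\max(u,v)=v$ almost everywhere, and two $\omega$-psh functions that coincide almost everywhere coincide everywhere. Assume, for contradiction, that $E$ is not pluripolar; translating $u$ and $v$ by a common constant we may assume $u\le 0$ and $v\le 0$ on $X$. Then $E$ is non-empty, and a non-empty set of the form $\{u>v\}$ cannot be Lebesgue negligible (otherwise $u\le v$ a.e., hence $\max(u,v)=v$ a.e., hence everywhere), so $\vol_\omega(E)>0$.

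I would then obtain a contradiction by estimating a well-chosen generalized capacity $\Capa_{v,\p}$ — lower weight $\f:=v\in\EcX$, upper weight $\p$ a bounded $\omega$-psh majorant of $v$ (say $\p\equiv 1$) — both from below and from above. \emph{From below}: $\max(u,v)$ is an admissible competitor for $\Capa_{v,\p}$, and so is its perturbation $w_\alpha:=(1-\alpha)\max(u,v)+\alpha\p$ for $\alpha\in(0,1)$; superadditivity of the non-pluripolar Monge--Amp\`ere product gives $\MA(w_\alpha)\ge\alpha^n\MA(\p)=\alpha^n\omega^n$, so $\Capa_{v,\p}(E)\ge\int_E\MA(w_\alpha)\ge\alpha^n\vol_\omega(E)>0$, and thus $\Capa_{v,\p}(E)$ is bounded below by a fixed positive constant. \emph{From above}: by Theorem~\ref{thm: cap formula varphi psi} and Proposition~\ref{prop: maximal}, if $h:=h^*_{v,\p,E}$ is the $(v,\p)$-extremal function of $E$ then $h=v$ quasi-everywhere on $E$, $v\le h\le\p$, $\MA(h)\equiv 0$ on $\{h<\p\}\setminus\overline E$, and hence
$$
\Capa_{v,\p}^*(E)=\int_{\{h<\p\}}\MA(h)=\int_{\overline E\cap\{h<\p\}}\MA(h).
$$
One then applies the generalized comparison principle to the pair $(v,h)$, splitting $\overline E$ according to $\{v<h\}$ and the contact set $\{v=h\}$ — which contains $E$ modulo a pluripolar set — and using plurifine locality of $\MA$ on the contact set together with the hypothesis $\MA(v)(E)=0$, to bound $\Capa_{v,\p}^*(E)$ by the $\MA(v)$-mass of a set on which $\MA(v)$ vanishes, up to a boundary term carried by $\overline E\setminus E$. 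To render that boundary term harmless I would first replace $E$ by $\{u>v+\varepsilon\}$ for an $\varepsilon>0$ for which this set is still non-pluripolar and $\MA(v)(\{u=v+\varepsilon\})=0$, and absorb the remaining error using the quantitative estimate $\Capfp\le F(\Capo)$ of Theorem~\ref{thm: comparison of Capacity} (so that passing to suitable open neighbourhoods costs arbitrarily little capacity). This forces $\Capa_{v,\p}^*(E)=0$, contradicting the lower bound.

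The main obstacle is exactly this upper bound: propagating the vanishing of $\MA(v)$ on $E$ through the relative extremal function of $E$ to the vanishing of $\Capa_{v,\p}^*(E)$, which hinges on choosing $\p$ (and the auxiliary set $\{u>v+\varepsilon\}$) so that the comparison principle closes. A direct Bedford--Taylor perturbation of $u$ — replacing $u$ by $(1-t)u$ to make its Monge--Amp\`ere density bounded below and comparing on the level set $\{(1-t)u>v+\mathrm{const}\}$ — does not work here because $u$ and $v$ are unbounded, so such a perturbation must be combined with a truncation whose remainder set has to be controlled uniformly; it is precisely this uniform control that the generalized Monge--Amp\`ere capacities, through Theorem~\ref{thm: comparison of Capacity} and the extremal-function formula of Theorem~\ref{thm: cap formula varphi psi}, are designed to supply.
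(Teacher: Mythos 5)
Your lower bound is fine: with $\p\equiv 1$ and $u,v\le 0$ the competitor $(1-\alpha)\max(u,v)+\alpha$ does give $\Capa_{v,1}(\{u>v\})\ge \alpha^n\vol_\omega(\{u>v\})$. The gap is in the upper bound, and it is not the technicality about $\overline{E}\setminus E$ that you flag: the inequality you need on the contact set goes the wrong way. Writing $h=h^*_{v,1,E}$, you have $h\ge v$ with $h=v$ quasi-everywhere on $E$, and your argument requires $\ind_{\{h=v\}}\MA(h)\le \ind_{\{h=v\}}\MA(v)$ in order to deduce $\MA(h)(E)\le \MA(v)(E)=0$. Plurifine locality (Demailly's inequality for $\max(v,h)=h$) gives exactly the opposite, $\ind_{\{h=v\}}\MA(h)\ge \ind_{\{h=v\}}\MA(v)$, and the reverse inequality is genuinely false: in dimension one, $v=\log|s|$ and $h=\max(\log|s|,-1)$ satisfy $h\ge v$, yet $\MA(h)$ is the arc measure on the contact circle while $\MA(v)=2\pi\delta_0$ puts no mass there. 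The comparison principle for the pair $(v,h)$ is equally useless here, since it only controls $\MA(h)$ by $\MA(v)$ on $\{v<h\}$, which is disjoint from $E$ modulo a pluripolar set. In fact your own lower bound already shows the mechanism cannot work: $\Capa_{v,1}(E)\ge\alpha^n\vol_\omega(E)$ holds for \emph{every} Borel set $E$ (take the competitor $(1-\alpha)v+\alpha$), so a set with $\MA(v)(E)=0$ but positive volume --- say a ball disjoint from the support of $\MA(v)$ --- has positive $(v,1)$-capacity. Since your upper-bound argument uses nothing about $E=\{u>v\}$ beyond $\MA(v)(E)=0$ and general properties of its extremal function, it cannot be repaired as stated: the lower obstacle $v$ forces the extremal function to acquire Monge--Amp\`ere mass on $E$ regardless of $\MA(v)$.

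The paper's proof applies the comparison principle where it actually bites, namely on sublevel sets of $v$ rather than on the contact set of an extremal function with lower obstacle $v$. For any $\f\in\EcX$ with $0\le \f-u\le C$ and $\delta C<s$ one has $\delta^n\int_{\{v<u-s\}}\MA(\f)\le \int_{\{v<\delta\f+(1-\delta)u-s\}}\MA(v)\le\int_{\{v<u\}}\MA(v)=0$, the point being that the set on which $\MA(v)$ is evaluated is squeezed inside $\{v<u\}$, where the hypothesis kills it; equivalently, $\Capa_{u,u+C}(\{v<u-s\})=0$, a capacity with lower weight $u$, not $v$. One then chooses for $\f$ the $(u,0)$-extremal functions $h_t$ of $\{u<-t\}$, which are bounded perturbations of $u$ increasing to $0$ (Lemma \ref{lem: pluripolar}), so that $\MA(h_t)\to\omega^n$ and $\vol_\omega(\{v<u-s\})=0$ for all $s>0$. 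If you want to salvage a capacity-theoretic formulation, that is the capacity to estimate.
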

\begin{proof}
We first claim that for every $\f\in \Ec(X,\omega)$ such that  $0\leq  \f-u \leq C$ for some constant $C>0$ and for any $s>0$ one has 
$$
\int_{\{v<u-s\}}MA(\f)=0.
$$
Indeed, fix $s>0$ and let $\f$ be such a function. Let $C>0$ be a constant such that $\f-u\leq C$ on $X.$ Choose $\delta\in (0,1)$ such that $\delta C<s.$ Now, by using the comparison principle and the fact that $0\leq \f-u\leq C$ we get
\begin{eqnarray*}
\delta^n\int_{\{v<u-s \}} MA(\f)&= & \int_{\{v<u-s\}}(\delta\omega+dd^c \delta \f)^n \\
&\leq &\int_{\{v<\delta\f+(1-\delta) u-s\}} MA\left (\delta \f +(1-\delta) u\right)\\
&\leq & \int_{\{v<\delta\f+(1-\delta) u-s\}} MA(v)\\
&\leq & \int_{\{v<u\}} MA(v)=0 .
\end{eqnarray*}
Thus, the claim is proved.  Now for each $t>0$ let $h_t$ denote the 
$(u,0)$-extremal function of the open set $G_t:=\{u<-t\}.$ It is clear that for every $t>0,$ $h_t\in \Ec(X,\omega)$ and $\sup_X ( h_t-u)<+\infty.$ The previous step yields 
$$
\int_{\{v<u-s\}}MA(h_t)=0, \ \forall s>0.
$$
Fix $\varepsilon >0$. Let $\tilde{u}$ be a continuous function on $X$ such that $\Capo(\{u\neq \tilde{u}\})<\varepsilon$. Since $h_t$ increases to $0$ (see Lemma \ref{lem: pluripolar} below),  we infer that 
$$
\int_{\{v<\tilde{u}-s\}}\omega^n\leq \liminf_{t\to +\infty} \int_{\{v<u-s\}} \MA(h_t)+ \Capa_{u,0}(\{u\neq \tilde{u}\}).
$$ 
Repeating this argument we get 
$$
\int_{\{v<u-s\}}\omega^n \leq \varepsilon +\Capa_{u,0}(\{u\neq \tilde{u}\}).
$$
Letting $\varepsilon \to 0$ and using Theorem \ref{thm: comparison of Capacity} we get ${\rm Vol}(\{v<u-s\})=0$, for any $s>0$ which implies
 that $u\leq v$ on $X$ as desired.
\end{proof}

\begin{lem}\label{lem: pluripolar}
Let $v\in \psh(X,\omega).$ For each $t>0$, set $G_t:=\{v < -t \}$. Denote by $h_t$ the $(\f,0)$-extremal function of $G_t$. Then $h_t$ increases quasi everywhere on $X$ to $0$ when $t$ increases to $+\infty$.
\end{lem}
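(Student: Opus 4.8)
The plan is to prove that $h_\infty:=\big(\lim_{t\to+\infty}h_t\big)^*$ is identically $0$, where $h_t:=h^*_{\f,0,G_t}$. After replacing $v$ by $v-\sup_X v$ I may assume $v\le 0$ (if $v$ is bounded then $G_t=\emptyset$ for large $t$ and there is nothing to prove, so assume $v$ unbounded). Since $t\mapsto G_t$ decreases, $t\mapsto h_t$ increases — a smaller set gives a larger admissible family in the definition of $h^*_{\f,0,\cdot}$ — and by Proposition \ref{prop: properties}(i) one has $\f\le h_t\le 0$. Hence $g:=\lim_t h_t$ satisfies $\f\le g\le 0$, its upper semicontinuous regularization $h_\infty$ is $\omega$-psh with $\{g<h_\infty\}$ pluripolar, and $h_t,h_\infty\in\Ec_\chi(X,\omega)$ for the convex weight $\chi$ with $\f\in\Ec_\chi(X,\omega)$ (by \cite{GZ07}, using $\f\le h_t,h_\infty\le 0$). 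Thus the assertion ``$h_t$ increases quasi everywhere to $0$'' is equivalent to $h_\infty\equiv 0$, which is what I will establish.

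The heart of the argument is to show $\int_X\phi_\infty\,\MA(h_\infty)=0$, where $\phi_\infty$ is the limit of the auxiliary functions $\phi_t:=\phi_{\f,0,G_t}$ from (\ref{eq: auxiliary function}). First, $\Capo(G_t)\to 0$: this is classical (see \cite{GZ05}), and also follows from the observation that $w_t:=\max(v/t+1,0)-1$ is $\omega$-psh for $t\ge 1$, lies in $[-1,0]$, equals $-1$ on $G_t$, and increases quasi everywhere to $0$. Theorem \ref{thm: comparison of Capacity} then gives $\Capa_{\f,0}(G_t)\le F(\Capo(G_t))\to 0$. Since $G_t$ is open, Theorem \ref{thm: cap formula varphi psi} yields $\Capa_{\f,0}(G_t)=\int_X\phi_t\,\MA(h_t)$, where $0\le\phi_t\le 1$ is quasicontinuous and, as $h_t\nearrow h_\infty$, $\phi_t\searrow\phi_\infty$ (equal to $(-h_\infty)/(-\f)$ on $\{\f>-\infty\}$ and to $0$ on $\{\f=-\infty\}$). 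Applying Lemma \ref{lem1} with $\varphi_j=h_j$ (monotone in $\Ec_\chi$, limit $h_\infty\in\Ec_\chi$) and $u_j=\phi_j$ (bounded, monotone, quasicontinuous) gives $\int_X\phi_t\,\MA(h_t)\to\int_X\phi_\infty\,\MA(h_\infty)$, hence the claimed vanishing.

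To conclude, observe that $\phi_\infty>0$ exactly on $\{h_\infty<0\}\cap\{\f>-\infty\}$; since the non-pluripolar measure $\MA(h_\infty)$ puts no mass on the pluripolar set $\{\f=-\infty\}$, the vanishing integral forces $\MA(h_\infty)(\{h_\infty<0\})=0$. As $h_\infty\in\EcX$, the generalized comparison principle for $\EcX$ recalled in Section \ref{sect: basic properties}, applied to the pair $(h_\infty,0)$, gives
$$\int_{\{h_\infty<0\}}\MA(0)\le\int_{\{h_\infty<0\}}\MA(h_\infty)=0,$$
so $\vol(\{h_\infty<0\})=0$; an $\omega$-psh function $\le 0$ that vanishes almost everywhere is identically $0$, whence $h_\infty\equiv 0$. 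I emphasise that this last step uses only the generalized comparison principle and not Proposition \ref{prop: domination principle}, whose proof relies on the present lemma.

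The step I expect to be most delicate is keeping the limit passage of the second paragraph non-circular: it requires Lemma \ref{lem1} — hence the elementary but fussy facts that the $h_t$ and $h_\infty$ share a common finite-energy class $\Ec_\chi$ and that the $\phi_t$ form a bounded monotone quasicontinuous sequence — and then the final deduction of $h_\infty\equiv 0$ from ``$\MA(h_\infty)$ is carried by $\{h_\infty=0\}$'' must be carried out with the generalized comparison principle alone, bypassing the domination principle.
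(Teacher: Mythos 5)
Your proof is correct, and it follows the paper's skeleton --- show $\Capa_{\f,0}(G_t)\to 0$, deduce that $\MA(h_\infty)$ charges only $\{h_\infty=0\}$, and finish with the generalized comparison principle (not the domination principle, exactly as you note: Proposition \ref{prop: domination principle} depends on this lemma). Two of the steps are, however, run through different tools. For the decay of $\Capa_{\f,0}(G_t)$ the paper invokes the proof of Theorem \ref{thm: pluripolar} applied to $-(-v)^p$, which works for an arbitrary convex weight $\chi$ with $\f\in\Ec_\chi(X,\omega)$; you instead pass through $\Capo(G_t)\to 0$ and Theorem \ref{thm: comparison of Capacity}, which tacitly requires the growth condition (\ref{eq: the two weights}) on $\chi$ --- harmless, since one may always shrink $\chi$ to $\max(\chi(t),t)$, and the paper itself applies Theorem \ref{thm: comparison of Capacity} to general $\f\in\EcX$ elsewhere, but it deserves a word. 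For the limit passage the paper uses Theorem \ref{formula} in the form $\int_{\{h_\infty<0\}}\MA(h_t)\le\int_{\{h_t<0\}}\MA(h_t)=\Capa_{\f,0}(G_t)$ together with $\MA(h_t)\weak\MA(h_\infty)$ and openness of $\{h_\infty<0\}$, whereas you route through the representation $\Capa_{\f,0}(G_t)=\int_X\phi_t\,\MA(h_t)$ of Theorem \ref{thm: cap formula varphi psi} and Lemma \ref{lem1}. Your version is heavier machinery but makes the convergence completely explicit (bounded monotone quasicontinuous integrands against a monotone sequence in a common class $\Ec_\chi$); the paper's is shorter but leaves the weak-convergence and semicontinuity step implicit. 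Both are sound.
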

\begin{proof}
We know that $h_t$ increases quasi everywhere to $h\in \Ec(X,\omega)$ and that  $h\leq 0$. 
By Theorem \ref{thm: pluripolar} (up to consider $-(-v)^p$ with $p\in(0,1)$ instead of $v$), we get
$$
\lim_{t\to+\infty}\Capa_{\f,0} (G_t)= 0 .
$$
It follows from Theorem \ref{formula} that for each $t>0$,
$$
\int_{\{h<0\}}MA(h_t)\leq \int_{\{h_t<0\}}MA(h_t)=\Capa_{\f,0}  (G_t).
$$
We thus get
$$
\int_{\{h<0\}}MA(h)\leq \liminf_{t\to+\infty}\int_{\{h<0\}}MA(h_t)=0.
$$
The comparison principle yields ${\rm Vol}( \{h<0\})=0$ which completes the proof.
\end{proof}

\begin{rem}
Lemma \ref{lem: pluripolar} is stated and proved in the case $\p\equiv 0$. Observe that it also holds for any $\p\in \EcX$ such that $\f<\p$. To see this we can follow the same arguments of above but for the final step where we get $\p\leq h \;\, \MA(h)$-almost everywhere. We then conclude using the domination principle.
\end{rem}

\medskip

\section{Applications to Complex Monge-Amp\`ere equations }

In this section (in the same spirit of \cite{DL1}) we prove Theorem B by using  $\Capa_\p:=\Capa_{\p-1,\p}$. Let us recall the setting. Let $X$ be a compact K\"{a}hler manifold of dimension $n$ and let $\omega$ be a K\"ahler form on $X$. Let $D$ be an arbitrary  divisor on $X$. Consider the complex Monge-Amp\`ere equations 
\begin{equation}\label{eq: 2}
 (\omega+dd^c \varphi)^n=e^{\lambda\f}f\omega^n, \ \lambda\in \R.
\end{equation}
We say that $f$  satisfies Condition $\Hc_f$ if 
$$
f=e^{\p^+-\p^-}, \ \ \p^{\pm} \ {\rm are \ quasi \ psh \ functions\ on }\ X\ , \ \p^-\in L^{\infty}_{\rm loc}(X\setminus D).
$$

We have already treated the case when $\lambda=0$ in \cite{DL1}. 
If $\lambda>0$ and $f$ is integrable then the same arguments can be applied. More precisely, 
$\Cc^0$-estimates follow from comparison principle while the $\Cc^2$ estimate 
follows exactly the same way as in \cite{DL1}. 

The case when $\lambda<0$ is known to be much more difficult. We need the
following observation where we make use of the generalized capacity $\Capis$:
\begin{lem} \label{lem: uniform estimate}
Let $\f\in \EcX$ be normalized by $\sup_X \f=0$. Assume that there exist a positive constant $A$ and $\psi\in \psh(X,\omega/2)$ such that
$\MA(\f) \leq e^{-A\p} \omega^n$. Then there exists $C>0$ depending only on
$\int_X e^{-2A\f} \omega^n$ such that 
$$
\f\geq \psi -C.
$$
\end{lem}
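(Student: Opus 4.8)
The plan is to estimate the capacity $\Capis$ of the sublevel sets $E_s:=\{\varphi<\psi-s\}$ by a quantity controlled by $B_0:=\int_X e^{-2A\varphi}\,\omega^n$, to upgrade this into the vanishing of $\vol(E_C)$ for a suitable $C=C(B_0)$ via a De~Giorgi type iteration, and then to conclude: once the $\omega$-psh functions $\max(\varphi,\psi-C)$ and $\varphi$ agree Lebesgue-a.e.\ they agree everywhere, which is precisely $\varphi\ge\psi-C$.

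First I would record three preliminary facts. Since $\psi\in\psh(X,\omega/2)$ we have $\omega+dd^c\psi\ge\tfrac12\omega$, so $\MA(\psi)\ge 2^{-n}\omega^n$; taking $u=\psi$ in the definition of $\Capis=\Capa_{\psi-1,\psi}$ gives $\Capis(E)\ge 2^{-n}\vol(E)$ for every Borel set $E$. Moreover, if $-1\le u\le 0$ is $\omega$-psh then $w:=\tfrac12 u+\psi$ is $\omega$-psh (write $\omega+dd^cw=\tfrac12(\omega+dd^cu)+(\tfrac\omega2+dd^c\psi)$), satisfies $\psi-1\le w\le\psi$, and obeys $\MA(w)\ge 2^{-n}\MA(u)$; taking suprema, $\Capo(E)\le 2^n\Capis(E)$ for every $E$, and hence the classical volume--capacity inequality for $\Capo$ yields $\vol(E)\le C_0\exp\!\big(-c_0\,\Capis(E)^{-1/n}\big)$ — a \emph{superlinear} smallness of $\vol(E)$ in terms of $\Capis(E)$. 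Finally $\sup_X\varphi=0$ gives $e^{-A\varphi}\le e^{-2A\varphi}$, so $\int_X e^{-A\varphi}\,\omega^n\le B_0$ and $e^{-A\varphi}\in L^2(\omega^n)$ with $\|e^{-A\varphi}\|_{L^2(\omega^n)}^2=B_0$.

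Next I would derive two comparison-principle estimates. Fix $u\in\psh(X,\omega)$ with $\psi-1\le u\le\psi$. For $s>1$ the function $\rho:=u-(s-1)$ is $\omega$-psh and $E_s\subseteq\{\varphi<\rho\}\subseteq\{\varphi<\psi-(s-1)\}$; the generalized comparison principle (available since $\varphi\in\EcX$) together with the bound $\MA(\varphi)\le e^{-A\psi}\omega^n\le e^{-A(s-1)}e^{-A\varphi}\omega^n$, valid on $\{\varphi<\psi-(s-1)\}$, gives $\int_{E_s}\MA(u)\le B_0\,e^{-A(s-1)}$; taking the supremum over $u$, $\Capis(E_s)\le B_0\,e^{-A(s-1)}$ for $s>1$. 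For $0<t<1$ the function $\rho:=tu+(1-t)\psi-s$ is $\omega$-psh with $E_{s+t}\subseteq\{\varphi<\rho\}\subseteq E_s$ and $\omega+dd^c\rho\ge t(\omega+dd^cu)$, so $\MA(\rho)\ge t^n\MA(u)$; the comparison principle then yields $t^n\int_{E_{s+t}}\MA(u)\le\int_{E_s}\MA(\varphi)$, and taking the supremum over $u$, the capacitary inequality $t^n\,\Capis(E_{s+t})\le\int_{E_s}\MA(\varphi)$.

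It remains to bound $\int_{E_s}\MA(\varphi)$. On $E_s$ one has $\MA(\varphi)\le e^{-A\psi}\omega^n\le e^{-As}e^{-A\varphi}\omega^n$, so by Cauchy--Schwarz and the $L^2$ bound, $\int_{E_s}\MA(\varphi)\le \vol(E_s)^{1/2}B_0^{1/2}$; invoking the superlinear smallness of $\vol(E_s)$ in $\Capis(E_s)$, this is $\le C_1\,\Capis(E_s)^2$ as soon as $\Capis(E_s)$ is small. Together with the capacitary inequality this is a self-improving recursion $t^n\Capis(E_{s+t})\le C_1\Capis(E_s)^2$, which I would enter at a level $s_0$, depending only on $B_0$, where $\Capis(E_{s_0})$ is already below the threshold needed by the iteration (possible by the a priori decay $\Capis(E_s)\le B_0 e^{-A(s-1)}$). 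A De~Giorgi type iteration (cf.\ \cite{Kol98}, \cite{DL1}) then forces $\Capis(E_C)=0$ for a finite $C=C(B_0)$, hence $\vol(\{\varphi<\psi-C\})=0$ by the first preliminary fact, and $\varphi\ge\psi-C$ follows as in the first paragraph. The point that needs the most care is this superlinear recursion: it uses both the comparison $\Capo\le 2^n\Capis$ — which is where the hypothesis $\psi\in\psh(X,\omega/2)$, rather than merely $\psi\in\psh(X,\omega)$, enters — and the integrability $e^{-2A\varphi}\in L^1$, a bare H\"older estimate yielding only the useless exponent $\tfrac12$; the remaining steps are routine.
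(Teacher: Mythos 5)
Your proof is correct and takes essentially the same route as the paper's: the comparison $\Capo\le 2^n\Capis$ coming from $\psi\in\psh(X,\omega/2)$, the volume--capacity domination, the Cauchy--Schwarz step using $e^{-2A\varphi}\in L^1(\omega^n)$, the capacitary inequality $t^n\Capis(E_{s+t})\le\int_{E_s}\MA(\varphi)$, and a De Giorgi iteration on $H(t)=\Capis(\{\varphi<\psi-t\})^{1/n}$. The only difference is that you prove inline the auxiliary facts the paper imports from \cite{DL1} (Lemmas 2.6, 2.7 and Proposition 2.8) and \cite{EGZ09}, including an explicit decay $\Capis(E_s)\le B_0e^{-A(s-1)}$ to start the iteration, which is a harmless substitute for the cited $H(+\infty)=0$.
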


Observe that for all $A>0$ and $\f\in \EcX$, $e^{-A\f}\omega^n\in L^1(X)$ as follows from Skoda integrability theorem \cite{Sk72}, \cite{Zer01}, since functions in $\EcX$ have zero Lelong number at all points \cite{GZ07}. 

\begin{proof}
Set
$$
H(t)=\left[\Capa_{\psi}(\{\varphi<\psi-t\})\right]^{1/n},\ t>0.
$$
Observe that $H(t)$ is right-continuous and $H(+\infty)=0$ (see \cite[Lemma 2.6]{DL1}). It follows from \cite[Lemma 2.7]{DL1} that 
$\Capo\leq 2^n \Capa_{\psi}$. By a strong volume-capacity domination in \cite{GZ05} we also have 
$$
\vol_{\omega} \leq \exp{\left(\frac{-C_1}{\Capo^{1/n}}\right)},
$$
where $C_1$ depends only on $(X,\omega)$. 
Thus using \cite[Proposition 2.8]{DL1} and the assumption on the measure $\MA(\f)$, we get
\begin{eqnarray*}
s^n \Capis(\{\varphi<\psi-t-s\})&\leq & \int_{\{\varphi<\p-t\}}\MA(\varphi) \\
&\leq & \int_{\{\varphi<\psi-t\}}e^{-A\varphi} e^{A\psi} \MA(\varphi) \\
&\leq & \left[\int_X e^{-2A\varphi} \omega^n\right]^{1/2}
\left[\int_{\{\varphi<\psi-t\}} \omega^n \right]^{1/2}\\
&\leq & C_2 \left[\Capis(\{\varphi<\psi-t\})\right]^2,
\end{eqnarray*}
where $C_2$ depends on $\int_X e^{-2A \f}\omega^n$. We then get
$$ 
s H(t+s)\leq C_2^{1/n} H(t)^2,  \ \forall t>0, \forall s\in [0,1]. 
$$
Then by \cite[Lemma 2.4]{EGZ09} we get $\varphi \geq \psi-C_3$,
where $C_3$ only depends on $\int_X e^{-2A \f}\omega^n$.
\end{proof}

\smallskip
Now, we are ready to prove Theorem B.
\subsection{Proof of Theorem B} It suffices to treat the case when $\lambda=-1$.
Since $f$ satisfies Condition $\Hc_f$ we can write $\log f=\psi^+-\psi^-$, where $\psi^{\pm}$ are qpsh functions on $X$, $\psi^-$ is locally bounded on $X\setminus D$ and there exists a uniform constant $C>0$ such that 
$$
dd^c \psi^{\pm}\geq -C \omega, \; \sup_X \psi^+ \leq C.
$$
We apply the smoothing kernel $\rho_\varepsilon$ in Demailly regularization theorem \cite{Dem92} to the functions $\f$ and 
$\psi^{\pm}$. 
For $\vep$ small enough, we get
$$ 
dd^c \rho_\varepsilon(\f+\p^-) \geq -C_1 \omega,\;\; 
dd^c \rho_\vep(\psi^+)\geq -C_1 \omega, \;\; 
\sup_X \rho_\vep(\psi^+) \leq C_1,
$$
where $C_1$ depends on $C$ and the Lelong numbers of the currents $C\omega+dd^c \psi^{\pm}$.
By the classical result of Yau \cite{Y78}, for each $\varepsilon$, there exists a unique smooth $\omega$-psh function $\phi_{\vep}$ satisfying
$$
\MA(\phi_\varepsilon)=e^{c_{\varepsilon}+\rho_\vep(\psi^+)-\rho_{\vep}(\f+\psi^-)}  \omega^n=g_\varepsilon \omega^n, \ \ \sup_X\phi_{\vep}=0,
$$
where $c_\varepsilon$ is a normalization constant such that 
$$
\int_X g_\varepsilon \omega^n=\int_X e^{-\f}f \omega^n =\int_X \omega^n.
$$
Since by Jensen's inequality $e^{\rho_\varepsilon(-\f+\log f)}\leq \rho_\varepsilon (e^{-\f+\log f})$ and $e^{\rho_\varepsilon(-\f+\log f)}$ converges point-wise to $e^{-\f}f$ on $X$, it follows from the general Lebesgue dominated convergence theorem that $e^{\rho_\varepsilon(-\f+\log f)}$ converges to $e^{-\f}f$ in $L^1(X)$ when 
$\vep\downarrow 0$. This means that $c_\varepsilon$ converges to zero when $\varepsilon\rightarrow 0$. It then follows from \cite[Lemma 3.4]{DL1} that $\phi_{\vep}$ converges in $L^1(X)$ to $\f-\sup_X \f$. 
We now apply the $\Cc^2$ estimate in \cite[Theorem 3.2]{DL1} to get
$$
n+\Delta \phi_{\vep} \leq C_3 e^{-2\rho_{\vep}(\f+\p^-)} \leq 
C_4 e^{-2(\f+\p^-)},
$$
where $C_3, C_4$ are uniform constants (do not depend on $\vep$).
Now, we need to bound $\f$ from below. By the assumption on $f$ we have 
$$
\MA(\f) = e^{\p^+-(\f+\p^-)} \omega^n \leq e^{-(\f+\p^--C)}\omega^n. 
$$
Consider $\p:=\frac{1}{2C+2} (\f+\p^-)$. Since this function belongs 
to $\psh(X,\omega/2)$ we can apply Lemma \ref{lem: uniform estimate} to get 
$$
\f-\sup_X \f \geq \p - C_5. 
$$
This gives $\f\geq C_6 \p^- -C_7$. Applying again this argument to 
$\phi_{\vep}$ and noting that $c_{\vep}$ converges to $0$, and hence under control, we get
$$
\phi_{\vep} \geq \rho_{\vep}(\f+\p^-)  - C_8 \geq C_9 \p^- -C_{10}.
$$
We can now conclude using the same arguments in \cite[Section 3.3]{DL1}. 

\subsection{(Non) Existence of solutions}
In the previous subsection, no regularity assumption on $D$ has been done. We now discuss about the existence of solutions in concrete examples, assuming more information on $D,f$.

Let $D= \sum_{j=1}^{N} D_j$ be a simple normal crossing divisor on $X$. Reacall that "simple normal crossing" means that around each intersection point of $k$ components $D_{j_1},...,D_{j_k}$ ($k\leq N$), we can find complex coordinates $z_1,...,z_n$ such that 
for each $l=1,...,k$ the hypersurface $D_{j_l}$ is locally given by $z_l=0$. 

For each $j$, let $L_j$ be the holomorphic line bundle defined by $D_j$. Let $s_j$ be a holomorphic section of $L_j$ defining $D_j$, i.e 
$D_j=\{s_j=0\}$. We fix  a hermitian metric $h_j$ on $L_j$ such that 
$\vert s_j\vert:= \vert s_j\vert_{h_j}\leq 1/e$.

We assume that $f$ has the following particular form:
\begin{equation}\label{eq: cond f}
f =  \frac{h}{\prod_{j=1}^N \vert s_j\vert^2 (-\log \vert s_j\vert)^{1+\alpha}}, \ \alpha>0,
\end{equation}
where $h$ is a bounded function: $0<1/B \leq h \leq B, \; B>0$. \\
\textbf{In this subsection we always assume that $\lambda<0$.} 

\begin{prop}\label{prop: non existence}
Assume that $f$ satisfies (\ref{eq: cond f}) with $0<\alpha\leq 1$. Then there is no solution in $\EcX$ to equation 
$$
(\omega+dd^c \varphi)^n=e^{\lambda\f} f \omega^n.
$$ 
\end{prop}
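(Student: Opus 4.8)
The plan is to argue by contradiction. Suppose $\f\in\EcX$ solves $(\omega+dd^c\f)^n=e^{\lambda\f}f\omega^n$ with $\lambda<0$, and normalize $\sup_X\f=0$. Since $\lambda\f\ge0$ we then have $e^{\lambda\f}\ge 1$, hence $\MA(\f)\ge f\omega^n$ on $X$, while at the same time $\int_X\MA(\f)=\int_X\omega^n<+\infty$. The strategy is to show that when $0<\alpha\le 1$ the inequality $\MA(\f)\ge f\omega^n$ forces $\f$ to be so singular along $D$ that $e^{\lambda\f}f\omega^n$ is not integrable near $D$, contradicting the finiteness of its mass.

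First I would build an explicit barrier. For $\alpha<1$ set $v:=-\varepsilon\sum_{j=1}^N(-\log|s_j|)^{1-\alpha}+C_0$, and for $\alpha=1$ set $v:=-\varepsilon\sum_{j=1}^N\log(-\log|s_j|)+C_0$, where $\varepsilon>0$ is small and $C_0$ is chosen so that $\sup_Xv=0$. Since $t\mapsto t^{1-\alpha}$ (resp. $t\mapsto\log t$) is concave increasing and $dd^c(-\log|s_j|)\ge-C\omega$ away from $D_j$, one checks that $v\in\pshXo$ for $\varepsilon$ small; in fact $v\in\Ec_{\chi}(X,\omega)$ for a suitable convex weight $\chi$, by the computations behind Lemma~\ref{lem: the two weights}. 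The crucial point is a local estimate near each $D_j$: the positive rank-one part of $dd^cv$ in the direction normal to $D_j$ produces a density of order $|s_j|^{-2}(-\log|s_j|)^{-1-\alpha}$, which is exactly the order of $f$, so that $\MA(v)\le C_1f\omega^n$ on a neighbourhood $U$ of $D$. This is precisely where the hypothesis $\alpha\le1$ enters: the exponent $1-\alpha$ (resp.\ the $\log\log$) is the strongest singularity a barrier can carry while keeping $\MA(v)$ under $f\omega^n$, whereas for $\alpha>1$ no such singular barrier exists.

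The heart of the matter, and the step I expect to be the main obstacle, is the comparison: from $\MA(\f)\ge f\omega^n\ge C_1^{-1}\MA(v)$ on $U$ one has to deduce $\f\le v+C_2$ on $U$. I would obtain this from the generalized comparison principle for functions of $\EcX$ together with the domination principle (Proposition~\ref{prop: domination principle}), by analysing the sets $\{v<\f-t\}$ and $\{\f<v-t\}$ and exploiting that $v\equiv-\infty$ only on the pluripolar set $D$ while $\MA(v)$ is dominated by $\MA(\f)$ near $D$; alternatively one runs the comparison principle on the shrinking pieces $\{0<|s_j|<\delta\}$ and lets $\delta\to0$. The delicate point is exactly that $v$ is singular along $D$, so the comparison must be carried out on a set whose closure meets $D$.

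Granted $\f\le v+C_2$ near $D$, the conclusion is a short integrability computation. For $\alpha<1$ we get $\MA(\f)=e^{\lambda\f}f\omega^n\ge c\,e^{|\lambda|\varepsilon\sum_j(-\log|s_j|)^{1-\alpha}}f\omega^n$ near $D$; restricting to a neighbourhood of a smooth point of some $D_j$ and substituting $u=-\log|s_j|$, the mass of the right-hand side becomes $c'\int^{+\infty}u^{-1-\alpha}e^{|\lambda|\varepsilon u^{1-\alpha}}\,du=+\infty$, which contradicts $\int_X\MA(\f)<+\infty$. For the borderline case $\alpha=1$ the barrier only gives $\f\le-\varepsilon\sum_j\log(-\log|s_j|)+C_2$, so I would feed this back into the equation: then $\MA(\f)\ge c\,|s_j|^{-2}(-\log|s_j|)^{-2+|\lambda|\varepsilon}\omega^n$ near $D_j$, i.e.\ the exponent has improved to $1-|\lambda|\varepsilon<1$; re-running the barrier construction with $1-|\lambda|\varepsilon$ in place of $\alpha$ yields $\f\le-\varepsilon'(-\log|s_j|)^{|\lambda|\varepsilon}+C_3$ near $D_j$, and the same substitution now gives $\int^{+\infty}u^{-2}e^{|\lambda|\varepsilon'u^{|\lambda|\varepsilon}}\,du=+\infty$. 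This bootstrap handles $\alpha=1$ and completes the proof.
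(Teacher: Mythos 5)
Your overall strategy (explicit barrier along $D$ plus a comparison argument, then an integrability contradiction) is genuinely different from the paper's. The paper's proof is much shorter and sidesteps the barrier entirely: writing $\MA(\f)=e^{-\f}\mu$ with $\mu=f\omega^n$ (after normalizing $\lambda=-1$), it sets $u:=e^{(\f-\sup_X\f)/n}$, checks that $u$ is a \emph{bounded} $\omega$-psh function with $\omega+dd^cu\geq \frac{u}{n}(\omega+dd^c\f)$, so that $\mu\leq n^ne^{\sup_X\f}\MA(u)$ — the factor $u^n=e^{\f-\sup_X\f}$ exactly cancels the $e^{-\f}$ — and then invokes \cite[Propositions 4.4 and 4.5]{DL1}, which say that a density of type (\ref{eq: cond f}) with $0<\alpha\leq 1$ cannot be dominated by the Monge--Amp\`ere measure of a bounded $\omega$-psh function. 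Your route avoids that citation and would be self-contained, and your barrier computation and the final divergence of $\int t^{-1-\alpha}e^{c t^{1-\alpha}}\,dt$ (and the bootstrap at $\alpha=1$) are correct.

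However, the step you yourself flag as the main obstacle — deducing $\f\leq v+C_2$ from $\MA(v)\leq C_1f\omega^n\leq C_1\MA(\f)$ near $D$ — is a genuine gap as written, and none of the tools you name closes it. The domination principle requires $\f\leq v+C_2$ to hold $\MA(v)$-a.e.\ as a \emph{hypothesis}, which is what you are trying to prove; and the generalized comparison principle applied to the set $\{v+C<\f\}$ only yields $\int_{\{v+C<\f\}}\MA(\f)\leq\int_{\{v+C<\f\}}\MA(v)\leq C_1\int_{\{v+C<\f\}}\MA(\f)$, which is vacuous whenever $C_1\geq1$. Remember also that for $\lambda<0$ the equation has no maximum principle (solutions need not be unique), so no sub/supersolution comparison is available. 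The argument can be rescued, but only by adding the quantitative observation you omit: since the singular part of $dd^cv$ carries a factor $\varepsilon$ and $f\to+\infty$ along $D$, one can choose $\varepsilon$ and the neighbourhood $U$ so small that $\MA(v)\leq\tfrac12 f\omega^n$ on $U$, while for $C$ large the set $\{v+C<\f\}$ is contained in $U$ (as $v$ is bounded off $U$ and $\f\leq0$); the comparison principle then gives $\int_{\{v+C<\f\}}\MA(\f)\leq\tfrac12\int_{\{v+C<\f\}}\MA(\f)$, hence $f\omega^n(\{v+C<\f\})=0$ and, $f$ being positive a.e., $\f\leq v+C$ everywhere. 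Without pinning down that the constant can be made strictly less than $1$, the proof does not go through.
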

\begin{proof}
We can assume (up to normalization) that $\lambda=-1$. Then observe that if there exists $\f\in \Ec(X,\omega)$ such that 
$
(\omega+dd^c \varphi)^n=e^{-\f}\mu,
$
where $\mu$ is a positive measure, then we can find $A>0$ such that
$$
\mu\leq A \left(\omega+dd^c u\right)^n, 
$$
where $u:=e^{(\f-\sup_X \f)/n}$ is a bounded $\omega$-psh function. Indeed, $u$ is a $\omega$-psh function and
$$\omega+dd^c u \geq \omega + \frac{u}{n}dd^c \f\geq \frac{u}{n} (\omega+dd^c \f) \geq 0.$$
This coupled with \cite[Proposition 4.4 and 4.5]{DL1} yields the conclusion.
\end{proof}

The above analysis shows that there is no solution if the density has singularities of Poincar\' e type or worse. We next show that when $f$
is less singular than the Poincar\'e type density (i.e. $\alpha>1$), equation
(\ref{eq: 2}) has a bounded solution provided $\lambda=-\vep$ with 
$\vep>0$ very small. We say that $f$ satisfies Condition $\Sc(B,\alpha)$ for some $B>0$, $\alpha>0$ if 
$$
f \leq  \frac{B}{\prod_{j=1}^N \vert s_j\vert^2 (-\log \vert s_j\vert)^{1+\alpha}}.
$$

\begin{thm}
Assume that $f$ satisfies Condition $\Sc(B,\alpha)$ with $\alpha>1$. We also normalize $f$ so that 
$\int_X f\omega^n=\int_X \omega^n$. Then for $\lambda=-\vep$ with 
$\vep>0$ small enough
depending only on $C,\alpha,\omega$,
there exists a bounded solution $\f$ to  (\ref{eq: 2}).\\
The solution 
is automatically continuous on $X$. In particular, it is also smooth on $X\setminus D$ if $f$ is smooth there.
\end{thm}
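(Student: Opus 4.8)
The plan is to follow the standard architecture for singular complex Monge--Amp\`ere equations --- establish an a priori $\mathcal{C}^0$-estimate, solve bounded approximating equations, and pass to the limit --- but with the generalized capacity $\Capa_{\rho-1,\rho}$ playing the role of $\Capo$. Throughout I may assume $\lambda=-\varepsilon$ with $\varepsilon>0$, and (enlarging $B$) that $B\ge1$. First I would build a \emph{bounded} barrier, and this is exactly where $\alpha>1$ enters: choose a bounded convex increasing function $\chi$ on $[2,+\infty)$ with $\chi''(t)\simeq t^{-1-\alpha}$ --- possible precisely because $\alpha>1$, e.g. $\chi(t)=-\delta_0 t^{1-\alpha}$ with $\delta_0$ small --- and set $\rho:=\sum_{j=1}^N\chi(-\log|s_j|^2)$. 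Convexity of $\chi$ makes the Hessian of $-\log|s_j|^2$ enter with a favourable sign, so for $\delta_0$ small $\rho$ is a bounded $\omega$-psh function, and a local computation near a crossing of components of $D$ gives $(\omega+dd^c\rho)^n\ge c\prod_j|s_j|^{-2}(-\log|s_j|)^{-1-\alpha}\,\omega^n$; adding a small strictly $\omega$-psh correction to control $\MA(\rho)$ from below away from $D$ as well, I obtain a bounded $\rho\in\pshXo$ with $\MA(\rho)\ge c_0 f\omega^n$ on all of $X$ (for $\alpha\le1$ such $\rho$ is forced to be unbounded, matching Proposition~\ref{prop: non existence}). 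Since $\rho$ is bounded, $\rho-1\in\EcX$, so Theorem~A applies to the weights $(\rho-1,\rho)$: $\Capa_{\rho-1,\rho}\le F(\Capo)$ with $F(0^+)=0$, $\Capa_{\rho-1,\rho}$ characterizes pluripolar sets, and $\MA(\rho)(E)\le\Capa_{\rho-1,\rho}(E)$ because $\rho$ is an admissible competitor.

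The heart of the proof is the $\mathcal{C}^0$-estimate. Let $\varphi\in\EcX$ with $\sup_X\varphi=0$ solve $\MA(\varphi)=e^{-\varepsilon\varphi}f\omega^n$. The comparison--principle inequality (the $(\rho-1,\rho)$-version of \cite[Prop.~2.8]{DL1}, as used in the proof of Lemma~\ref{lem: uniform estimate}) gives, for $t>0$ and $s\in(0,1]$,
$$
s^n\,\Capa_{\rho-1,\rho}\bigl(\{\varphi<\rho-t-s\}\bigr)\ \le\ \int_{\{\varphi<\rho-t\}}e^{-\varepsilon\varphi}f\omega^n\ \le\ e^{\varepsilon t}e^{\varepsilon\|\rho\|_\infty}\int_{\{\varphi<\rho-t\}} f\omega^n .
$$
The point is then to turn ``$f\omega^n\le c_0^{-1}\MA(\rho)$ with $\rho$ bounded'' into a \emph{super-linear} control of the right-hand side by $g(t):=\Capa_{\rho-1,\rho}(\{\varphi<\rho-t\})$: combining $\MA(\rho)(E)\le(\|\rho\|_\infty+1)^n\Capo(E)$, the sharp near-$D$ decay $\int_{\{|s_j|<\epsilon\}} f\omega^n\le C(\log\tfrac1\epsilon)^{-\alpha}$ (this is where $\alpha>1$ re-enters quantitatively) and the strong volume--capacity bound $\vol_\omega\le\exp(-C/\Capo^{1/n})$, one gets $\int_{\{\varphi<\rho-t\}} f\omega^n\le C\,g(t)^{1+\gamma}$ for some $\gamma=\gamma(\alpha,n)>0$ (positive because $\alpha>1$). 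Hence $s\,g(t+s)\le (Ce^{\varepsilon t})^{1/n}g(t)^{(1+\gamma)/n}$; $g$ is decreasing, right-continuous, and $g(t)\to0$ since $\bigcap_t\{\varphi<\rho-t\}$ is pluripolar and $\Capa_{\rho-1,\rho}$ kills pluripolar sets. Provided $\varepsilon$ is small enough that the growth $e^{\varepsilon t/n}$ is absorbed by the gain $g(t)^{\gamma/n}$, a De~Giorgi--type iteration in the spirit of \cite[Lemma~2.4]{EGZ09} yields $g(t_0)=0$ for a finite $t_0$. Then $\{\varphi<\rho-t_0\}$ is pluripolar, so $\varphi\ge\rho-t_0$ quasi everywhere, and since $\MA(\varphi)$ charges no pluripolar set the domination principle (Proposition~\ref{prop: domination principle}) upgrades this to $\varphi\ge\rho-t_0\ge-\|\rho\|_\infty-t_0$ on $X$; together with $\varphi\le0$ this is the uniform bound.

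To produce the solution I would set $\tilde f_k:=c_k\min(f,k)$ with $c_k\to1$ chosen so that $\int\tilde f_k\omega^n=\int\omega^n$; since $\tilde f_k$ is bounded and $\tilde f_k\omega^n\le c_0^{-1}e^{\varepsilon\|\varphi_k\|_\infty}\MA(\rho)$, a continuity method in $t\in[-\varepsilon,0]$ (openness is automatic because for $t\le0$ the linearized operator has the right zeroth-order sign; closedness uses the uniform $\mathcal{C}^0$-bound above --- again with $\varepsilon$ small --- together with interior higher-order estimates) produces $\varphi_k\in\pshXo\cap\mathcal{C}^0(X)$ solving $\MA(\varphi_k)=e^{-\varepsilon\varphi_k}\tilde f_k\omega^n$. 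A subsequence converges in $L^1$ to a bounded $\varphi\in\pshXo$, and --- using Theorem~A\,(ii) to replace quasi-continuous by continuous test functions in the convergence of the Monge--Amp\`ere measures, exactly as in Lemma~\ref{lem1} --- $\varphi$ solves $\MA(\varphi)=e^{-\varepsilon\varphi}f\omega^n$ with full mass; a Kolodziej-type stability estimate (valid because the densities are uniformly dominated by $\MA(\rho)$, $\rho$ bounded) makes this convergence uniform, whence $\varphi\in\mathcal{C}^0(X)$. Finally, if $f\in\mathcal{C}^\infty(X\setminus D)$ then $\varphi$ being globally bounded makes the equation uniformly elliptic with smooth positive right-hand side on relatively compact subsets of $X\setminus D$, and Yau's interior estimates \cite{Y78} give $\varphi\in\mathcal{C}^\infty(X\setminus D)$ by a standard bootstrap.

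The main obstacle is the $\mathcal{C}^0$-estimate: for $\lambda<0$ the coupling $e^{-\varepsilon\varphi}$ has the wrong sign for a naive comparison argument, so one must extract a genuinely super-linear decay of $g(t)=\Capa_{\rho-1,\rho}(\{\varphi<\rho-t\})$ --- the quantitative content of $\alpha>1$, channelled through the generalized capacity $\Capa_{\rho-1,\rho}$ --- and exploit the smallness of $\varepsilon$ to neutralize the unfavorable factor $e^{\varepsilon t}$. Everything else (the barrier construction, the passage to the limit via Theorem~A, the interior regularity) is routine once this estimate is in place.
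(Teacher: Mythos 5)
Your architecture (bounded barrier, capacity iteration for the $\Cc^0$-estimate, approximation) is a genuinely different route from the paper's, but its central step fails. The inequality
$$
\int_{\{\f<\rho-t\}}e^{-\vep\f}f\,\omega^n\ \le\ e^{\vep t}e^{\vep\|\rho\|_\infty}\int_{\{\f<\rho-t\}}f\,\omega^n
$$
is backwards: on $\{\f<\rho-t\}$ one has $\f<\rho-t\le\|\rho\|_\infty-t$, hence $e^{-\vep\f}\ge e^{\vep(t-\|\rho\|_\infty)}$, a \emph{lower} bound. An upper bound for $e^{-\vep\f}$ on this sublevel set is exactly the lower bound on $\f$ you are trying to prove, so the De~Giorgi iteration as written is circular. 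This is precisely the obstruction that makes $\lambda<0$ hard, and the paper's proof avoids it by decoupling: for $\p$ ranging over the compact convex set $\Cc=\{u\in\psh(X,\omega)\,:\,-C\le u\le0\}$ it solves $\MA(\f)=e^{-\vep\p+c_\p}f\omega^n$ --- a Monge--Amp\`ere equation with a \emph{given} density, which satisfies Condition $\Sc(2B,\alpha)$ once $e^{\vep C}\le2$ --- invokes the a priori bound of Lemma \ref{lem: uniform bound} to see that $\Phi(\p):=\f$ maps $\Cc$ into itself, checks continuity of $\Phi$, and concludes by the Schauder fixed point theorem. The smallness of $\vep$ enters only through $e^{\vep C}\le 2$, not through absorbing a factor $e^{\vep t}$ into a superlinear capacity decay (a decay which, incidentally, you assert but do not prove: the natural chain $f\omega^n\lesssim\MA(\rho)$ and $\MA(\rho)(E)\lesssim\Capo(E)\lesssim\Capa_{\rho-1,\rho}(E)$ only yields \emph{linear} control).

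Two further steps would need repair even if the estimate were fixed. Your continuity-method claim that ``openness is automatic because for $t\le0$ the linearized operator has the right zeroth-order sign'' has the sign reversed: linearizing $\MA(\f)-e^{\lambda\f}f\omega^n$ produces an operator of the form $\Delta_{\omega_\f}-\lambda c$ with $c>0$, invertible by the maximum principle when $\lambda\ge0$ but not automatically when $\lambda<0$ --- this is exactly why solutions for $\lambda<0$ may fail to exist or be unique. And your barrier weight is internally inconsistent: a bounded increasing convex function on $[2,+\infty)$ with $\chi''\simeq t^{-1-\alpha}>0$ cannot exist (convex, increasing and bounded forces $\chi'\equiv0$), and the example $\chi(t)=-\delta_0 t^{1-\alpha}$ is concave; the correct choice is the bounded \emph{decreasing} convex weight $\chi(t)=+\delta_0 t^{1-\alpha}$, for which $\chi''(t)=\delta_0\alpha(\alpha-1)t^{-1-\alpha}>0$ does give $\MA(\rho)\gtrsim f\omega^n$. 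The barrier idea itself is sound and is essentially how the key input, Lemma \ref{lem: uniform bound}, is proved in \cite{DL1}.
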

\begin{proof}
The last statement follows easily from our previous analysis. Let us prove the existence. We use the Schauder Fixed Point Theorem. Let $C=C(2B,\alpha)$ be the constant in Lemma \ref{lem: uniform bound} below. Choose $\vep>0$ very small such that $e^{\vep C}\leq 2$.
Consider the following compact convex set in $L^1(X)$:
$$
\Cc:= \{u \in \psh(X,\omega)\ \setdef \ -C\leq u\leq 0 \}.
$$ 
Let $\p\in \Cc$ and $c_{\p}$ be a constant such that 
$$
\int_X e^{-\vep \p +c_{\p}} f\omega^n = \int_X \omega^n.
$$
Since $-C\leq \p\leq 0$, it is clear that $-C\vep\leq c_{\p}\leq 0$. Let $\f$ be the unique bounded $\omega$-psh function such that $\sup_X \f=0$ and 
$$
(\omega+dd^c \f)^n= e^{-\vep \p +c_{\p}}f\omega^n.
$$
The density on the right-hand side satisfies Condition 
$\Sc(B,\alpha)$ since $c_{\p}\leq 0$ and since $e^{\vep C}\leq 2$.
We thus get from Lemma \ref{lem: uniform bound} below that $\f\geq -C$. Thus we have defined a mapping from $\Cc$ to itseft
$$
\Phi : \Cc \rightarrow \Cc, \ \ \ \Phi(\p):=\f.
$$
Let us prove that $\Phi$ is continuous on $\Cc$. Let $\p_j$ be a sequence in $\Cc$ which converges to $\p$ in $L^1(X)$. Denote by
$$
c_j:=c_{\p_j}, \ \ c:=c_{\p}, \ \Phi(\p_j)=\f_j,\ \Phi(\p)=\f.
$$
It is enough to prove that any cluster point of the sequence $(\f_j)$
is equal to $\f$. Therefore, we can assume that $\f_j$ converges to 
$\f_0$ in $L^1(X)$ and up to extracting a subsequence that 
$\p_j$ converges almost everywhere to $\p$ on $X$ and also that 
$c_j$ converges to $c_0\in [-C\vep,0]$. Since $e^{-\vep \p_j +c_j}f$ converges in $L^1(X)$ to $e^{-\vep \p +c_0}f$ in $L^1(X)$ and almost everywhere, it follows from \cite[Lemma 3.4]{DL1} that 
$$
(\omega+dd^c \f_0)^n = e^{-\vep \p +c_0}f\omega^n.
$$
It is clear that $c_0=c$ and it follows from Hartogs' lemma that
$\sup_X \f_0=0$. Thus $\f_0=\f$. This concludes the continuity of 
$\Phi$.
 
Now, since $\Cc$ is compact and convex in $L^1(X)$ and since $\Phi$
is continuous on $\Cc$, by Schauder Fixed Point Theorem there exists 
a fixed point of $\Phi$, say $\f$. Then $\f-c_{\f}/{\vep}$ is the desired solution.
\end{proof}

We refer the reader to \cite[Section 4.2]{DL1} for the proof of the following lemma.

\begin{lem}\label{lem: uniform bound}
Assume that  $f$ satisfies Condition $\Sc(B,\alpha)$ with $\alpha>1$, $B>0$. Let $\f\in \Ec(X,\omega)$ be the unique function such that 
$$
(\omega + dd^c \f)^n =f\omega^n, \ \sup_X \f=0.  
$$
Then $\f\geq -C$ with $C=C(B,\alpha)>0$.
\end{lem}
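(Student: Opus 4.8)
The strategy is to dominate $f\,\omega^n$ by a fixed constant times the Monge--Amp\`ere measure of an explicit \emph{bounded} $\omega$-psh barrier, and then to conclude by a standard comparison argument. Concretely, the plan is the following two steps.

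\emph{Step 1: construction of an attenuated Poincar\'e barrier.} Set $\tau_j:=-\log|s_j|_{h_j}\ge 1$ and, for a small parameter $\delta>0$ (to be fixed, depending on $\alpha$ and the fixed geometric data) and the function $\chi(\tau):=-\tfrac{1}{\alpha(\alpha-1)}\,\tau^{1-\alpha}$ on $[1,+\infty)$, put $\phi:=\delta\sum_{j=1}^{N}\chi(\tau_j)$. Then $\chi'(\tau)=-\tfrac1\alpha\tau^{-\alpha}$ and $\chi''(\tau)=\tau^{-1-\alpha}$, so $\chi$ is increasing, $|\chi'|\le 1/\alpha$, and $\chi$ is \emph{bounded} on $[1,+\infty)$ --- this uses $\alpha>1$, since $\int_1^{+\infty}\tau^{-\alpha}\,d\tau<+\infty$ exactly when $\alpha>1$; for $0<\alpha\le1$ the function $\chi$ is unbounded, in accordance with Proposition \ref{prop: non existence}. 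Away from $D$,
$$
dd^c\phi=\delta\sum_{j=1}^{N}\bigl(\chi'(\tau_j)\,dd^c\tau_j+\chi''(\tau_j)\,d\tau_j\wedge d^c\tau_j\bigr),
$$
where $dd^c\tau_j$ is a fixed smooth form and $d\tau_j\wedge d^c\tau_j\ge0$; choosing $\delta$ small enough that $\delta\sum_j\chi'(\tau_j)\,dd^c\tau_j\ge-\tfrac12\omega$, one gets $\phi\in\psh(X,\omega)\cap L^\infty(X)$ and
$$
\omega+dd^c\phi\ \ge\ \tfrac12\omega+\delta\sum_{j=1}^{N}\tau_j^{-1-\alpha}\,d\tau_j\wedge d^c\tau_j\ \ge\ 0 .
$$
Near a point lying on exactly the components $D_{j_1},\dots,D_{j_k}$, in adapted coordinates $z_1,\dots,z_n$ with $D_{j_l}=\{z_l=0\}$, one has $\tau_{j_l}=-\log|z_l|+O(1)$ and $d\tau_{j_l}\wedge d^c\tau_{j_l}\ge c\,|z_l|^{-2}\,i\,dz_l\wedge d\bar z_l$ on a small enough neighbourhood; expanding $(\omega+dd^c\phi)^n$ and keeping the corresponding mixed term gives
$$
(\omega+dd^c\phi)^n\ \ge\ c'\delta^k\,\frac{\omega^n}{\prod_{l=1}^{k}|z_l|^{2}\,\tau_{j_l}^{1+\alpha}}\ \ge\ \frac{c''}{\prod_{j=1}^{N}|s_j|^{2}\,(-\log|s_j|)^{1+\alpha}}\,\omega^n
$$
on that neighbourhood, with $c''=c''(\alpha)>0$ (the remaining sections being bounded away from $0$ and $\infty$ there), while trivially $(\omega+dd^c\phi)^n\ge(\tfrac12\omega)^n$ away from $D$, where $f$ is bounded. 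Covering $X$ by finitely many such charts and invoking Condition $\Sc(B,\alpha)$, one obtains a constant $A=A(B,\alpha)>0$ with $f\,\omega^n\le A\,\MA(\phi)$; after normalizing $\sup_X\phi=0$ one has $-M_0\le\phi\le0$ with $M_0=M_0(\alpha)$.

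\emph{Step 2: conclusion.} Thus $\MA(\f)=f\,\omega^n\le A\,\MA(\phi)$ with $\phi$ a bounded $\omega$-psh function, i.e.\ $f\,\omega^n=g\,\MA(\phi)$ with $0\le g\le A$. By Ko{\l}odziej's a priori estimate for Monge--Amp\`ere equations whose density is bounded relatively to that of a bounded potential --- in the form used, via \cite[Propositions 4.4 and 4.5]{DL1}, in the proof of Proposition \ref{prop: non existence} --- the normalized solution $\f\in\EcX$ of $\MA(\f)=f\,\omega^n$ satisfies $\mathrm{osc}_X\f\le C'$ with $C'$ depending only on $n$, $A$ and $M_0$, hence only on $B$ and $\alpha$; since $\sup_X\f=0$ this is the claimed bound $\f\ge -C$, $C=C(B,\alpha)$. (Alternatively one runs the comparison principle in $\EcX$ together with the fact that $\MA(\f)$ charges no pluripolar set, using the generalized capacities of Theorem A to absorb the error terms, to show directly that $\f\ge(1-\varepsilon)\phi-C_\varepsilon$ for each $\varepsilon\in(0,1)$.)

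\emph{Where the difficulty lies.} The crux is Step 1: exhibiting a \emph{bounded} $\omega$-psh function whose Monge--Amp\`ere measure bounds $f\,\omega^n$ from below. The delicate points are the local computation of $(\omega+dd^c\phi)^n$ near the crossings of $D$ --- controlling the smooth corrections in $d\tau_j\wedge d^c\tau_j$ and comparing $\bigwedge_{l}|z_l|^{-2}\,i\,dz_l\wedge d\bar z_l\wedge\omega^{n-k}$ with $\prod_l|z_l|^{-2}\,\omega^n$ --- together with the joint choice of $\chi$ and $\delta$ making $\phi$ simultaneously $\omega$-psh, bounded, and with sufficiently large positive Hessian; this is exactly where the hypothesis $\alpha>1$ is indispensable. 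Step 2 is then standard pluripotential theory, and tracking the constants through both steps yields the explicit dependence $C=C(B,\alpha)$.
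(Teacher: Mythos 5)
The paper itself does not prove this lemma; it simply cites \cite[Section 4.2]{DL1}, and your two-step strategy (an explicit \emph{bounded} $\omega$-psh barrier $\phi$ with $f\omega^n\le A\,\MA(\phi)$, followed by a domination-type a priori estimate) is indeed the expected argument there, so the overall approach is right. Two points need fixing, though. First, a sign error that, as written, kills Step 1: with $\chi(\tau)=-\tfrac{1}{\alpha(\alpha-1)}\tau^{1-\alpha}$ one has $\chi'(\tau)=+\tfrac1\alpha\tau^{-\alpha}$ and $\chi''(\tau)=-\tau^{-1-\alpha}<0$, so $\chi$ is concave and the term $\chi''(\tau_j)\,d\tau_j\wedge d^c\tau_j$ is \emph{negative}; the barrier then carries no positive Monge--Amp\`ere mass along $D$. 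You must take $\chi(\tau)=+\tfrac{1}{\alpha(\alpha-1)}\tau^{1-\alpha}$ (bounded, \emph{decreasing}, convex), for which the derivative values you actually use ($|\chi'|\le 1/\alpha$, $\chi''=\tau^{-1-\alpha}$) are correct; monotonicity of $\chi$ is irrelevant, only boundedness of $\chi$ and $\chi'$ and positivity of $\chi''$ matter, and these hold exactly when $\alpha>1$. With that correction Step 1 is sound.

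Second, Step 2 is where the real external input sits, and you should be precise about it. The statement ``$\MA(\f)\le A\,\MA(\phi)$ with $\phi$ bounded $\Rightarrow$ ${\rm osc}_X\f\le C(n,A,\|\phi\|_\infty)$'' is a genuine theorem of Ko{\l}odziej (the characterization of measures admitting bounded potentials by domination, J.\ London Math.\ Soc.\ 2005), but it does \emph{not} follow from the capacity iteration used elsewhere in this paper: since $f\notin L^p$ for any $p>1$ when $\alpha\le n$, the H\"older/volume--capacity trick of Lemma \ref{lem: uniform estimate} is unavailable, and the naive estimate $s^n\Capa_\omega(\{\f<-t-s\})\le A\,\Capa_\omega(\{\f<-t\})$ obtained from $\MA(\phi)\le\Capa_\omega$ is only linear in the capacity, so the De Giorgi lemma of \cite{EGZ09} does not close. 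Your parenthetical ``alternative'' via $\f\ge(1-\varepsilon)\phi-C_\varepsilon$ runs into the same obstruction and I do not see how to make it work as stated. So either invoke Ko{\l}odziej's domination theorem explicitly (your reference to \cite[Propositions 4.4 and 4.5]{DL1} points at the \emph{non-existence} direction used in Proposition \ref{prop: non existence}, not at this estimate), or reproduce the argument of \cite[Section 4.2]{DL1}. With those two repairs the proof is complete and gives $C=C(B,\alpha)$ (together with the fixed geometric data $(X,\omega,D,h_j)$, as intended).
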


\subsection{Proof of Theorem C} Assume that  $\f\in \EcX$ satisfies 
$$
(\omega+dd^c\f)^n=e^{\lambda\f}f\omega^n, \lambda>0.
$$
Up to rescaling $\omega$ it suffices to treat the case when 
$\lambda=1$. The proof of Theorem C is quite similar to that of Theorem B. The difference here is that $f$ is not integrable. For convenience of the reader we rewrite the arguments here. Since $f$ satisfies Condition $\Hc_f$ we can write $\log f=\psi^+-\psi^-$, where $\psi^{\pm}$ are qpsh functions on $X$, $\psi^-$ is locally bounded on $X\setminus D$ and there exists a uniform constant $C>0$ such that 
$$
dd^c \psi^{\pm}\geq -C \omega, \; \sup_X \psi^+ \leq C.
$$
We apply the smoothing kernel $\rho_\varepsilon$ in Demailly regularization theorem \cite{Dem92} to the functions $\f$ and 
$\psi^{\pm}$. 
For $\vep$ small enough, we get
$$ 
dd^c \rho_\varepsilon(\p^-) \geq -C_1 \omega,\;\; 
dd^c \rho_\vep(\f+\psi^+)\geq -C_1 \omega, \;\; 
\sup_X \rho_\vep(\f+\psi^+) \leq C_1,
$$
where $C_1$ depends on $C$,  the Lelong numbers of the currents $C\omega+dd^c \psi^{\pm}$ and $\sup_X \f$.
By the classical result of Yau \cite{Y78}, for each $\varepsilon$, there exists a unique smooth $\omega$-psh function $\phi_{\vep}$ satisfying
$$
\MA(\phi_\varepsilon)=e^{c_{\varepsilon}+\rho_\vep(\f+\psi^+)-\rho_{\vep}(\psi^-)}  \omega^n=g_\varepsilon \omega^n, \ \ \sup_X\phi_{\vep}=0,
$$
where $c_\varepsilon$ is a normalization constant such that 
$$
\int_X g_\varepsilon \omega^n=\int_X e^{\f}f \omega^n =\int_X \omega^n.
$$
Since by Jensen's inequality $e^{\rho_\varepsilon(\f+\log f)}\leq \rho_\varepsilon (e^{\f+\log f})$ and 
$e^{\rho_\varepsilon(\f+\log f)}$ converges point-wise to 
$e^{\f}f$ on $X$, it follows from the general Lebesgue dominated convergence theorem that $e^{\rho_\varepsilon(\f+\log f)}$ converges to $e^{\f}f$ in $L^1(X)$ when 
$\vep\downarrow 0$. This means that $c_\varepsilon$ converges to zero when $\varepsilon\rightarrow 0$. It then follows from Lemma 3.4 in \cite{DL1} that $\phi_{\vep}$ converges in $L^1(X)$ to $\f-\sup_X \f$. 
We now apply the $\Cc^2$ estimate in Theorem 3.2 in \cite{DL1} to get
$$
n+\Delta \phi_{\vep} \leq C_3 e^{-2\rho_{\vep}(\p^-)} \leq 
C_4 e^{-2\p^-},
$$
where $C_3, C_4$ are uniform constants (do not depend on $\vep$).
Now, we need to bound $\f$ from below. By the assumption on $f$ we have 
$$
\MA(\f) = e^{\f+\p^+-\p^-} \omega^n \leq e^{-(\p^--C_1)}\omega^n. 
$$
Consider $\p:=\frac{1}{2C}\p^-$. Since this function belongs 
to $\psh(X,\omega/2)$ we can apply Lemma \ref{lem: uniform estimate} to get 
$$
\f-\sup_X \f \geq \p - C_5. 
$$
Now the remaining part of the proof follows by exactly
the same way as we have done in \cite[Section 3.3]{DL1}. 

\subsection{Non Integrable densities}
When $0\leq f\notin L^1(X)$ it is not clear that we can find a solution $\f\in \EcX$ of equation 
$$
(\omega+dd^c \f)^n =e^{\f}f\omega^n.
$$
We show in the following that it suffices to find a subsolution. Another similar result has been proved by Berman and Guenancia in \cite{BG13} using the variational approach. We provide here a simple proof using 
our generalized Monge-Amp\`ere capacities. 

\begin{thm}\label{thm: subsol}
Let $0\leq f$ be a measurable function such that $\int_X f\omega^n=+\infty$. If there exists $u\in \EcX$ such that $\MA(u)\geq e^{u} f\omega^n$ then there is a unique $\f\in \EcX$ such that 
$$
\MA(\f) =e^{\f}f\omega^n.
$$
\end{thm}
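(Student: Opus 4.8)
The strategy is to realise $\f$ as a decreasing limit of solutions of truncated equations, with the subsolution $u$ playing the role of a lower barrier. First I would truncate: put $f_j:=\min(f,j)$. Then $f_j\omega^n$ is a finite measure with bounded density, in particular carried by no pluripolar set, so by \cite{GZ07} there is a unique $\f_j\in\EcX$ with $\MA(\f_j)=e^{\f_j}f_j\,\omega^n$, and integrating yields $\int_X e^{\f_j}f_j\,\omega^n=\int_X\omega^n$. Two monotonicity statements for this sequence are produced by one and the same device, namely the generalized comparison principle together with the domination principle of Proposition \ref{prop: domination principle}. Since $\MA(u)\ge e^uf\,\omega^n\ge e^uf_j\,\omega^n$, the function $u$ is a subsolution of the $j$-th equation: on $E:=\{\f_j<u\}$ the comparison principle gives $\int_E\MA(u)\le\int_E\MA(\f_j)$ and the subsolution inequality gives $\int_E e^uf_j\,\omega^n\le\int_E\MA(u)$; as $e^u>e^{\f_j}$ on $E$, these combine to force $f_j=0$ $\omega^n$-almost everywhere on $E$, hence $\MA(\f_j)(E)=\int_E e^{\f_j}f_j\,\omega^n=0$, and the domination principle gives $u\le\f_j$ on $X$. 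Similarly, for $j<k$ one has $\MA(\f_j)=e^{\f_j}f_j\,\omega^n\le e^{\f_j}f_k\,\omega^n$, so on $E:=\{\f_j<\f_k\}$ the comparison principle gives $\int_E\MA(\f_k)\le\int_E\MA(\f_j)\le\int_E e^{\f_j}f_k\,\omega^n$, which (since $e^{\f_k}>e^{\f_j}$ on $E$) forces $f_k=0$, hence $f_j=0$, $\omega^n$-almost everywhere on $E$, so $\MA(\f_j)(E)=0$ and the domination principle gives $\f_k\le\f_j$ on $X$. The very same computation with two solutions of the original equation in place of $\f_j$ and $\f_k$ proves the uniqueness part of the theorem.

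Thus $(\f_j)$ is non-increasing and bounded below by $u$, so it decreases to some $\f_\infty\in\pshXo$ with $\f_\infty\ge u$. Since a $\omega$-psh function dominating an element of $\EcX$ again belongs to $\EcX$ \cite{GZ07}, we get $\f_\infty\in\EcX$, and consequently $\MA(\f_j)\weak\MA(\f_\infty)$ by the monotone continuity of the Monge--Amp\`ere operator in $\EcX$ \cite{GZ07}. Now I would identify the limit measure. On the one hand $e^{\f_j}f_j\to e^{\f_\infty}f$ $\omega^n$-almost everywhere (because $\f_j\downarrow\f_\infty$ pointwise and $f_j\uparrow f$), so for every continuous $\chi\ge0$ Fatou's lemma gives $\int\chi\,d\MA(\f_\infty)=\lim_j\int\chi\,e^{\f_j}f_j\,\omega^n\ge\int\chi\,e^{\f_\infty}f\,\omega^n$, whence $\MA(\f_\infty)\ge e^{\f_\infty}f\,\omega^n$. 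On the other hand, for $j\ge m$ we have $\MA(\f_j)=e^{\f_j}f_j\,\omega^n\le e^{\f_m}f\,\omega^n$, so passing to the weak limit (testing against $\chi\in\Cc(X)$ with $0\le\chi\le\ind_U$ for an open $U\supseteq E$, then using regularity) gives $\MA(\f_\infty)\le e^{\f_m}f\,\omega^n$ for every $m$. In particular $\MA(\f_\infty)\ll\omega^n$; writing $\MA(\f_\infty)=\rho\,\omega^n$ the last bound reads $\rho\le e^{\f_m}f$ a.e.\ for all $m$, so letting $m\to\infty$ gives $\rho\le e^{\f_\infty}f$ a.e., while the first bound gives $\rho\ge e^{\f_\infty}f$ a.e. Hence $\MA(\f_\infty)=e^{\f_\infty}f\,\omega^n$, and $\f:=\f_\infty\in\EcX$ is the required solution (in particular $e^\f f\in L^1(X)$, with $\int_X e^\f f\,\omega^n=\int_X\omega^n$).

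The only genuinely delicate point is this passage to the limit: a priori the mass $\int_X e^{\f_j}f_j\,\omega^n=\int_X\omega^n$, which is constant in $j$, could concentrate and be lost in the limit. What rules this out is the interplay of the two ingredients: the lower barrier $u$ forces $\f_\infty\in\EcX$, hence $\MA(\f_\infty)(X)=\int_X\omega^n$ together with $\MA(\f_j)\weak\MA(\f_\infty)$; and the \emph{decreasing} monotonicity $\f_j\downarrow\f_\infty$ is what yields the one-sided bound $\MA(\f_\infty)\le e^{\f_m}f\,\omega^n$. If one wishes to bypass the monotone continuity of the Monge--Amp\`ere operator, the generalized capacities furnish an alternative: with $M:=\sup_X\f_1$, the inequalities $u-M\le\f_j-M\le0$ give $\int_E\MA(\f_j)=\int_E\MA(\f_j-M)\le\Capa_{u-M,0}(E)$, and since $u-M\in\EcX$, Theorem \ref{thm: comparison of Capacity} bounds the right-hand side by $F(\Capo(E))$ with $F(0^+)=0$, uniformly in $j$; thus the measures $\MA(\f_j)$ are uniformly absolutely continuous with respect to $\Capo$, which is exactly the non-concentration needed, and is the way the capacities of the first part enter the argument.
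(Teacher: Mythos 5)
Your first half — truncation, the comparison-plus-domination argument giving $u\le\f_j$ and $\f_{j+1}\le\f_j$, the decreasing limit $\f_j\downarrow\f_\infty\ge u$ lying in $\EcX$, the weak convergence $\MA(\f_j)\weak\MA(\f_\infty)$, and Fatou giving $\MA(\f_\infty)\ge e^{\f_\infty}f\omega^n$ — is exactly the paper's route (and your monotonicity argument is welcome detail the paper omits). The gap is in your identification of the limit from above. You pass the bound $\MA(\f_j)\le e^{\f_m}f\,\omega^n$ ($j\ge m$) through the weak limit and then "use regularity" to get it on all Borel sets, deducing $\MA(\f_\infty)\ll\omega^n$ with density $\rho\le e^{\f_m}f$. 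But $e^{\f_m}f\,\omega^n$ is \emph{not} a Radon measure: $\f_m$ is bounded and $\int_X f\omega^n=+\infty$, so this majorant fails to be locally finite, hence is not outer regular, precisely on the set where $f$ is not locally integrable — which is the whole point of the theorem. An upper bound by a non--locally-finite measure does not survive weak convergence: on $\R^n$ the measures $\mu_j:=c_nj^n\ind_{B(0,1/j)}\,dm$ satisfy $\mu_j\le C|x|^{-n}dm$ for all $j$ and converge weakly to $\delta_0$, which is not dominated by $C|x|^{-n}dm$. So from the information you use, $\MA(\f_\infty)$ could a priori still concentrate mass on a (non-pluripolar) set where $f$ is not locally integrable, and neither the absolute continuity nor the pointwise bound $\rho\le e^{\f_m}f$ follows.

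The correct closing move — which is the paper's, and which your last paragraph contains half of — is to show instead that the right-hand side has full mass, $\int_Xe^{\f_\infty}f\,\omega^n=\int_X\omega^n$; since you already have $\MA(\f_\infty)\ge e^{\f_\infty}f\,\omega^n$ and $\MA(\f_\infty)(X)=\int_X\omega^n$, equality of measures then follows. For this, fix $\vep>0$ and use quasi-continuity of $\f_\infty$ to find an open $U$ with $\Capo(U)<\vep$ such that $\f_\infty$ is continuous, hence bounded, on $K:=X\setminus U$; then $f\in L^1(K)$ and dominated convergence gives $\int_K\MA(\f_j)\to\int_Ke^{\f_\infty}f\,\omega^n$. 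The mass escaping into $U$ is controlled uniformly in $j$ by exactly the capacity estimate you state at the end, $\int_U\MA(\f_j)\le\Capa_{u,0}(U)\le F(\Capo(U))\le F(\vep)$ with $F(0^+)=0$ from Theorem \ref{thm: comparison of Capacity}; letting $\vep\to0$ gives the full-mass identity. Note that this uniform capacity bound is needed here, for the identification of the limit measure — not, as you suggest, merely as an optional substitute for the monotone continuity of the Monge--Amp\`ere operator, which holds in $\EcX$ in any case.
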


\begin{proof}
The uniqueness follows easily from the comparison principle. Indeed, one can find a proof in \cite[Proposition 3.1]{BG13}. We now establish the existence. For each $j\in \N$ we can find $\f_j \in \psh(X,\omega) \cap L^{\infty}(X)$ such that 
$$
(\omega+dd^c \f_j)^n = e^{\f_j} \min(f,j) \omega^n. 
$$
It follows from the comparison principle that $\f_j$ is non-increasing 
and $\f_j \geq u$. Then $\f_j\downarrow \f\in \EcX$ and by continuity
of the complex Monge-Amp\`ere operator along decreasing sequence in 
$\EcX$ we get 
$$
\MA(\f) = e^{\f}f\omega^n.
$$
Indeed, since $\MA(\f_j)$ converges weakly to $\MA(\f)$, from Fatou's lemma we get 
$$
\MA(\f) \geq e^{\f} f\omega^n
$$
in the sense of positive Borel measures. To get the reverse inequality
we need to show that the right-hand side has full mass, i.e. 
$$
\int_X e^{\f} f\omega^n =\int_X \omega^n. 
$$
Fix $\vep >0$. Since $\f$ is $\omega$-psh, in particular quasi-continuous, we find $U$ an open subset of $X$ such that
$\Capo(U)<\vep$ and $\f$ is continuous on $K:=X\setminus U$. Then $\f$ is bounded on $K$ and hence $f$ must be integrable on $K$.
We thus can apply the Lebesgue Dominated Convergence Theorem on $K$ to get 
$$
\lim_{j\to +\infty}\int_K \MA(\f_j) = \lim_{j\to +\infty} \int_K e^{\f_j} \min(f,j) \omega^n = \int_K e^{\f}f\omega^n.
$$
We can assume that $\f_j \leq 0$. It follows from Theorem \ref{thm: comparison of Capacity} that 
$$
\int_U \MA(\f_j) \leq \Capa_{u,0}(U) \leq F(\vep) \rightarrow 0\ \ {\rm  as}\ \vep \downarrow 0.
$$
This implies that 
\begin{eqnarray*}
\int_X e^{\f}f\omega^n &\geq & \int_K e^{\f}f\omega^n=\lim_{j\to +\infty}\int_K \MA(\f_j)\\
&=& \int_X \MA(\f_j) - \lim_{j\to+\infty} 
\int_U \MA(\f_j) \\
&\geq &\int_X \omega^n -F(\vep).
\end{eqnarray*}
By letting $\vep\to 0$ we get
$\int_X e^{\f}f\omega^n =\int_X \omega^n$, which completes  the proof.
\end{proof}

\begin{rem}
Theorem \ref{thm: subsol} also holds if $\omega$ is merely semipositive and big. 
\end{rem}

\begin{ex}\label{ex: subsol}
Let $D=\sum_{j=1}^N D_j$ be a simple normal crossing divisor on $X$. Assume that the
$D_j$ are defined by $s_j=0$, where $s_j$ are holomorphic sections such that 
$|s_j|<1/e$. Consider the following density
$$
f = \frac{1}{\prod_{j=1}^N |s_j|^{2}}. 
$$
Then for suitable positive constants $C_1, C_2$ the following function 
$$
\f:= -2\sum_{j=1}^N \log (-\log |s_j|+C_1) -C_2
$$
is a subsolution of $\MA(\f)=e^{\f}f\omega^n$. In fact, it suffices to find a function $u\in \Ec(X,\omega/2)$ such that $e^{u}f$ is integrable (see Example
\ref{ex: subsol semi}).
\end{ex}

\subsection{The case of semipositive and big classes}
In this section we try to extend our result in Theorem C to the case of semipositive and big classes. Let $\theta$ be a smooth closed semipostive $(1,1)$-form
on $X$ such that $\int_X \theta^n>0$. Assume that  $E=\sum_{j=1}^M a_j E_j$ 
is an effective simple normal crossing divisor on $X$ such that $\{\theta\}-c_1(E)$
is ample. Let $0\leq f$ is a non-negative measurable function on $X$. Consider the following degenerate complex Monge-Amp\`ere equation
\begin{equation}\label{eq: MA semi}
(\theta +dd^c \f)^n = e^{\f}f\omega^n.
\end{equation}
As in Theorem C we obtain here a similar regularity for solutions in $\EcX$:

\begin{thm}\label{thm: thmC semi}
Assume that $0<f\in \Cc^{\infty}(X\setminus D)$ satisfies Condition $\Hc_f$. Let 
$\theta$ and $E$ be  as above. If there is a solution in $\EcX$ of equation (\ref{eq: MA semi}) then this solution is also smooth on $X\setminus (D\cup E)$. 
\end{thm}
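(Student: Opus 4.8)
The strategy is to mimic the proof of Theorem C, working on $X$ with the regularised K\"ahler forms $\theta_\delta:=\theta+\delta\omega$ ($\delta\in(0,1]$, $\omega$ a fixed K\"ahler form) in place of $\omega$, and absorbing the divisor $E$ into the a priori estimates via a quasi-psh potential attached to it. First I would rescale $\omega$ so that $\lambda=1$. Since $\{\theta\}-c_1(E)$ is ample, I fix holomorphic sections $s_j$ of the line bundle $\mathcal{O}(E_j)$ with $E_j=\{s_j=0\}$ and smooth hermitian metrics with $|s_j|\le e^{-1}$, chosen so that
\[
\psi_E:=\rho+\sum_{j=1}^{M}a_j\log|s_j|^2
\]
is $\theta$-psh, $\psi_E\le-1$, smooth on $X\setminus\mathrm{Supp}(E)$, and $\theta+dd^c\psi_E\ge\omega_0$ for some K\"ahler form $\omega_0$ (with equality of smooth forms off $\mathrm{Supp}(E)$). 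The function $\psi_E$ will play near $E$ exactly the role played near $D$ by $-\psi^-$ in the proof of Theorem C. Note that $\f\in\mathcal{E}(X,\theta)$ has zero Lelong numbers, so by Skoda's theorem \cite{Sk72} one has $\int_X e^{-A\f}\omega^n<+\infty$ for every $A>0$, while since $\f$ solves (\ref{eq: MA semi}) one has $\int_X e^{\f}f\,\omega^n=\int_X\theta^n<+\infty$.

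Writing $\log f=\psi^+-\psi^-$ as in Condition $\Hc_f$, I regularise $\f+\psi^+$, $\psi^-$ and $\psi_E$ by Demailly's kernels $\rho_\vep$ \cite{Dem92}, obtaining quasi-psh functions whose $dd^c$ is bounded below by $-C_1\omega$ uniformly, and for each $\vep,\delta$ I solve by Yau's theorem \cite{Y78}
\[
(\theta_\delta+dd^c\phi_{\vep,\delta})^n=e^{c_{\vep,\delta}+\rho_\vep(\f+\psi^+)-\rho_\vep(\psi^-)}\,\omega^n,\qquad\sup_X\phi_{\vep,\delta}=0,
\]
with $c_{\vep,\delta}$ normalising the total mass to $\int_X\theta_\delta^n$. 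Exactly as in the proof of Theorem C, Jensen's inequality and dominated convergence give $e^{\rho_\vep(\f+\log f)}\to e^{\f}f$ in $L^1$, hence $c_{\vep,\delta}\to 0$ as $\vep,\delta\to 0$, and the stability estimate for complex Monge-Amp\`ere equations, adapted to the big class $\{\theta\}$ as in \cite[Lemma 3.4]{DL1}, gives $\phi_{\vep,\delta}\to\f-\sup_X\f$ in $L^1(X)$.

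The core of the argument is the uniform a priori estimates. For the lower ($\mathcal{C}^0$) bound I would combine $\MA(\f)\le e^{-(\psi^--C_1)}\omega^n$, the fact that $\tfrac{1}{2C}\psi^-\in\psh(X,\omega/2)$, and the bound $\theta+dd^c\psi_E\ge\omega_0$ (which lets a small multiple of $\psi_E$ enter a form dominated by $\omega/2$), and run the capacity argument of Lemma \ref{lem: uniform estimate} with the generalized capacity $\Capis$, re-derived for $\theta_\delta$ using $\theta_\delta\le\omega$ and the monotonicity of the capacities from Section \ref{sect: basic properties}, to obtain $\f-\sup_X\f\ge c_1\psi^-+c_2\psi_E-C_2$ and, since $c_{\vep,\delta}$ stays under control, $\phi_{\vep,\delta}\ge c_1\rho_\vep(\psi^-)+c_2\psi_E-C_3$, a lower bound degenerating only along $D\cup E$. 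Feeding this into the second-order estimate (Aubin--Yau, Siu) and repeating the computation of \cite[Theorem 3.2]{DL1}, now tracking both the extra term $dd^c\psi_E$ (treated like $-dd^c\psi^-$) and the degeneracy $\omega-\theta_\delta\ge 0$, yields $n+\Delta\phi_{\vep,\delta}\le C\,e^{-2\psi^-}\prod_{j}|s_j|^{-2c}$ uniformly in $\vep,\delta$. On any $\Omega\Subset X\setminus(D\cup E)$ this is a genuine uniform $\mathcal{C}^2$ bound, so Evans--Krylov and Schauder estimates give uniform $\mathcal{C}^k$ bounds on $\Omega$ for all $k$; letting $\vep,\delta\to 0$ and using the $L^1$-convergence identifies the limit with $\f$ up to an additive constant, so $\f$ is smooth on $X\setminus(D\cup E)$.

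I expect the main obstacle to be making the $\mathcal{C}^2$-estimate genuinely uniform under the two simultaneous degenerations --- the vanishing of the smooth forms $\theta_\delta$ as $\delta\to 0$ and the logarithmic poles of $\psi_E$ along $E$ --- so that the constants do not blow up. This forces a careful choice of test function, of the type $\log(n+\Delta\phi_{\vep,\delta})-A(\phi_{\vep,\delta}+\psi^-+\psi_E)$, ensuring that the unfavourable contributions of $dd^c\psi_E$ and of $\omega-\theta_\delta$ carry the right sign; a secondary point is the transfer of Lemma \ref{lem: uniform estimate} and of the stability statement to the big class $\{\theta\}$, which follows from $\theta\le\omega$ and the results of Section \ref{sect: basic properties}.
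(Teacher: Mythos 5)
Your proposal is correct and takes essentially the same route as the paper: regularize $\f+\psi^+$ and $\psi^-$ by Demailly's kernels, solve the perturbed equations for $\theta+\vep\omega$ by Yau's theorem, show $c_\vep\to 0$ and $L^1$-convergence to $\f-\sup_X\f$, and then invoke uniform $\Cc^0$ and $\Cc^2$ estimates degenerating only along $D\cup E$ (the paper cites Theorems 5.1 and 5.2 of \cite{DL1} for precisely the bounds you sketch via $\psi_E$ and the capacity argument). The only cosmetic differences are that you decouple the smoothing parameter from the perturbation $\delta\omega$ and write out the a priori estimates that the paper outsources to \cite{DL1}.
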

Note that in Theorem \ref{thm: thmC semi} we do not assume that $f$ is integrable on $X$. We also stress that there is at most one solution in $\Ec(X,\theta)$ (see \cite{BG13}). 
\begin{proof}
We adapt the proof of Theorem 3 in \cite{DL1} where we followed essentially the ideas in \cite{BEGZ10}. Assume that $\f\in \Ec(X,\theta)$ 
is a solution to equation (\ref{eq: MA semi}). By assumption on $f$ we can find 
a uniform constant $C>0$ such that 
$$
f=e^{\p^+-\p^-}, \ \ dd^c \p^{\pm} \geq -C \omega^n ,\ \ \sup_X \p^+\leq C, \ \sup_X \f \leq C, \ \ \p^-\in L^{\infty}_{\rm loc}(X\setminus D).
$$
We regularize $\f$ and $\p^{\pm}$ by using the smoothing kernel $\rho_{\vep}$ in Demailly's work \cite{Dem92}. Then for 
$\vep>0$ small enough we have 
$$ 
dd^c \rho_\varepsilon(\p^-) \geq -C_1 \omega,\;\; 
dd^c \rho_\vep(\f+\psi^+)\geq -C_1 \omega, \;\; 
\sup_X \rho_\vep(\f+\psi^+) \leq C_1,
$$
where $C_1$ depends on $C$ and  the Lelong numbers of the currents $C\omega+dd^c \psi^{\pm}$. For each $\vep>0$ by the famous result of Yau \cite{Y78} there exits a unique smooth $\phi_{\vep}\in \psh(X,\theta+\vep \omega)$ normalized by 
$\sup_X \phi_{\vep}=0$ such that 
$$
(\theta + \vep \omega +dd^c \phi_{\vep})^n = e^{c_{\vep}+\f_{\vep}+\p^+_{\vep}-\p^-_{\vep}}\omega^n = g_{\vep} \omega^n,
$$
where $c_{\vep}$ is a normalized constant. As in the proof of Theorem 3 in \cite{DL1} we can prove that $c_{\vep}$ converges to $0$ as $\vep \downarrow 0$.
We then can show that $\phi_{\vep}$ converges in $L^1$ to $\f-\sup_X \f$. Now, 
we can apply Theorem 5.1 and Theorem 5.2 in \cite{DL1} to get  uniform 
bound on $\phi_{\vep}$ and $\Delta_{\omega} \phi_{\vep}$ on every compact subset 
of $X\setminus (D\cup E)$. From this we can  get the smoothness of $\f$ on $X\setminus (D\cup E)$ as in \cite{DL1}.
\end{proof}

It follows from Theorem \ref{thm: subsol} (which is also valid in the case of semipositive and big classes) that to solve the equation it suffices to find a subsolution in $\Ec(X,\theta)$.
We show in the following example that in some cases 
it is easy to find a subsolution in $\Ec(X,\theta)$.

\begin{ex}\label{ex: subsol semi}
We consider the density given in Example \ref{ex: subsol}. Assume that $\theta$
satisfies $\{\theta\}-c_1(E)>0$, where $E=\sum_{j=1}^M{a_j E_j}$ is 
an effective simple normal crossing divisor on $X$. Assume that $E_j$ is defined by the zero locus 
of a holomorphic section $\sigma_j$ such that $|\sigma_j|<1/e$. Then for some constants $p\in (0,1)$ and $a>0$, $A\in \R$ the following function
$$
u:= -\left(-a\sum_{j=1}^N\log |s_j| - \frac{1}{2}\sum_{j=1}^M a_j \log |\sigma_j|\right)^p-A
$$
belongs to $\Ec(X,\theta/2)$ and verifies 
$\int_X e^{u}f\omega^n=2^{-n}\int_X \theta^n$. It follows from \cite{BBGZ13} that there exists  $v\in \Ec(X,\theta/2)$ such that $v\leq 0$ and 
$$
(\theta/2 + dd^c v)^n = e^{u}f\omega^n.
$$
It is easy to see that $\f:=u+v\in \Ec(X,\theta)$ is a subsolution of (\ref{eq: MA semi}).
\end{ex}

\subsection{Critical Integrability}
Recently, Berndtsson \cite{Ber13} solved the openness conjecture of Demailly
and Koll\'ar \cite{DemK01} which says that given $\phi\in \psh(X,\omega)$ and 
$$
\alpha(\phi) = \sup\{t>0 \ \setdef \ e^{-t\phi} \in L^1(X)\} <+\infty,
$$
then one has $e^{-\alpha \phi} \notin L^1(X)$ (a stronger version of the openness conjecture has been quite recently obtained by Guan and Zhou \cite{GZ13}).

In the following result, we use the generalized capacity to show that $e^{-\alpha\phi}$ is however not far to be integrable in the following sense:
\begin{thm}\label{thm: critical integrability}
Let $\phi\in \psh(X,\omega)$ and $\alpha=\alpha(\phi)\in (0,+\infty)$ be the canonical threshold of $\phi$. Then we can find $\f\in \psh(X,\omega)$  having zero Lelong number at all points of $X$
such that 
$$
\int_X e^{\f-\alpha \phi} \omega^n <+\infty. 
$$
\end{thm}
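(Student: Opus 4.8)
The plan is to attenuate the singularities of $\phi$ by composing with a well-chosen concave increasing function $\chi:\R^-\to\R^-$ and to show that a suitable choice produces $\f=\chi\circ\phi$ with zero Lelong numbers and $e^{\f-\alpha\phi}\in L^1(X)$. Observe first that we may normalize $\phi\le -1$. For a concave increasing $\chi$ with $\chi(0)=0$, the function $\f:=\chi\circ\phi$ is again $\omega$-psh (since $\chi$ concave increasing and $\phi$ is $\omega$-psh, using $\omega\ge 0$), and $\f$ has zero Lelong number at every point as soon as $\chi$ grows sublinearly, i.e. $\lim_{t\to-\infty}\chi(t)/t=0$: indeed $\nu(\f,x)=\lim_{t\to-\infty}\chi(t)/t\cdot\nu(\phi,x)=0$. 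So the whole problem reduces to arranging the integrability $\int_X e^{\chi\circ\phi-\alpha\phi}\omega^n<+\infty$ while keeping $\chi(t)/t\to0$.

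The key mechanism is a distribution-function estimate for $\phi$ coming from the generalized capacities. For $t>0$ set $V_t:=\vol_\omega(\{\phi<-t\})$. Since $e^{-s\phi}\in L^1(X)$ for every $s<\alpha$, Chebyshev gives $V_t\le C_s e^{-st}$ for all $t>0$ and all $s<\alpha$; one must be slightly more careful to capture the borderline rate, but the standard fact (see \cite{GZ05}, \cite{Kol98}) is that $\vol_\omega(\{\phi<-t\})$ decays at least like $e^{-\alpha t}$ up to a subexponential correction — more precisely, for every $\eta>0$ there is $C_\eta$ with $V_t\le C_\eta e^{-(\alpha-\eta)t}$, while Berndtsson's theorem \cite{Ber13} (openness) is what prevents $V_t$ from decaying strictly faster than $e^{-\alpha t}$ in an integrated sense. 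We then write, by the layer-cake formula,
$$
\int_X e^{\chi\circ\phi-\alpha\phi}\,\omega^n = \int_X e^{-g(\phi)}\,\omega^n,\qquad g(t):=\alpha t-\chi(t)\ \ (\text{so } g(t)\to+\infty),
$$
and this is finite iff $\int_1^{+\infty} e^{-g(-t)}\,d\big(-V_t\big)<+\infty$, which by integration by parts holds provided $\int_1^\infty g'(-t)\,e^{-g(-t)}V_t\,dt<+\infty$. Plugging $V_t\lesssim e^{-(\alpha-\eta)t}$ and $g(-t)=\alpha t+|\chi(-t)|$ reduces matters to the convergence of $\int_1^\infty \big(\alpha+|\chi'(-t)|\big)e^{-(2\alpha-\eta)t-|\chi(-t)|}\,dt$, which converges for any bounded-derivative $\chi$; the real issue is only the competition at the threshold, so one instead works with the sharper bound and chooses $|\chi(-t)|$ to grow just fast enough, e.g. $|\chi(-t)|= t/\log t$ for large $t$, to kill the correction factor coming from how close $V_t$ gets to $e^{-\alpha t}$.

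The main obstacle is therefore the sharp control of $V_t=\vol_\omega(\{\phi<-t\})$ near the critical exponent: Chebyshev alone only yields $e^{-(\alpha-\eta)t}$, and one needs that the loss $\eta$ can be absorbed by an arbitrarily slowly-growing attenuation $|\chi(-t)|=o(t)$. This is exactly where the generalized capacities enter: using Theorem \ref{thm: comparison of Capacity} with a weight $\chi$ and the volume–capacity domination $\vol_\omega\le\exp(-C_1\Capo^{-1/n})$ of \cite{GZ05}, one converts the finite-$\chi$-energy of a mild attenuation $-(-\phi)^p$ (which lies in $\EcX$ by Lemma \ref{lem: the two weights}) into a quantitative decay of $\Capo(\{\phi<-t\})$, hence of $V_t$, with an explicit subexponential gain. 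Feeding this back into the layer-cake computation yields a concrete $\chi$ (one may in fact take $\chi\circ\phi\in\Ec(X,\omega)$, as remarked after the statement) for which $e^{\chi\circ\phi-\alpha\phi}$ is integrable while $\chi(t)/t\to0$, giving $\nu(\f,\cdot)\equiv0$. Finally, once such a $\f$ is produced, the measure $\mu:=e^{\f-\alpha\phi}\omega^n$ is a finite positive measure putting no mass on pluripolar sets (since $\Capfp$ with lower weight in $\EcX$ characterizes pluripolar sets, Theorem A(iii)), so by \cite{GZ07} there is a unique normalized solution in $\EcX$ of $(\omega+dd^c\f)^n=e^{\f-\alpha\phi}\omega^n$, completing the proof.
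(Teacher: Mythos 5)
Your overall strategy (take $\f=\chi\circ\phi$ for a sublinear attenuation $\chi$ and verify integrability by a layer-cake computation against the sublevel-set volumes $V_t=\vol_\omega(\{\phi<-t\})$) is the same as the paper's \emph{second}, constructive proof (the paper's primary proof is different: it builds $\f$ as the decreasing limit of solutions $\f_j$ of $(\omega+dd^c\f_j)^n=e^{\f_j-\alpha_j\phi}\omega^n$ and bounds $\f_j\ge\vep\phi-C_\vep$ from below via the capacity $\Capa_\p$ and the iteration lemma of \cite{EGZ09}). However, as written your argument has a genuine gap at its central step. First, a sign error: with $g(t)=\alpha t-\chi(t)$ one has $g(-t)=-\alpha t+|\chi(-t)|$, not $\alpha t+|\chi(-t)|$; the integrand in the layer-cake formula behaves like $e^{\alpha t-|\chi(-t)|}\,V_t$, so after inserting $V_t\le C_\eta e^{-(\alpha-\eta)t}$ you are left with $\int e^{\eta t-|\chi(-t)|}\,dt$, not the harmless $\int e^{-(2\alpha-\eta)t-|\chi(-t)|}\,dt$ you wrote. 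For a \emph{fixed} $\eta>0$ this integral diverges for every sublinear $\chi$ (e.g.\ your suggested $|\chi(-t)|=t/\log t$ is eventually dominated by $\eta t$), so a single application of the Chebyshev bound can never be beaten while keeping $\chi(t)/t\to0$. The missing idea — and the whole content of the paper's construction via the sequences $a_k$, $c_k$, $t_k$ — is to use a sequence $\eta_k=2^{-k-1}\downarrow 0$ together with a piecewise-linear $\chi$ of slope $2^{-k}$ on $[t_k,t_{k+1}]$, where the breakpoints $t_k$ are chosen inductively large enough to absorb the constants $C_{\eta_k}=e^{a_k}$, which blow up as $\eta_k\to0$. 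Without this interleaving the convergence is not established. Your fallback suggestion of getting the volume decay from $\Capo(\{\phi<-t\})$ via Theorem \ref{thm: comparison of Capacity} and $\vol_\omega\le\exp(-C_1\Capo^{-1/n})$ does not help either: that route only yields a stretched-exponential bound of the form $\exp(-ct^{p/n})$, far weaker than the $e^{-(\alpha-\eta)t}$ decay which comes directly from $e^{-(\alpha-\eta)\phi}\in L^1$.

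Two smaller points. The attenuating weight must be \emph{convex} increasing on $\R^-$ with $\chi'(-\infty)=0$ (as in $\chi(t)=-(-t)^p$), not concave: a concave $\chi$ contributes a negative term $\chi''(\phi)\,d\phi\wedge d^c\phi$ to $dd^c(\chi\circ\phi)$ and its slope does not tend to $0$ at $-\infty$, so $\chi\circ\phi$ would neither be $\omega$-psh in general nor have zero Lelong numbers. Finally, your closing paragraph about solving $(\omega+dd^c\f)^n=e^{\f-\alpha\phi}\omega^n$ concerns Theorem D rather than the statement at hand, and the paper obtains it from the subsolution criterion (Theorem \ref{thm: subsol}) rather than directly from \cite{GZ07}, since $e^{-\alpha\phi}$ need not be integrable.
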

One can moreover chose $\f=\chi\circ \phi\in \Ec(X,\omega)$ for some $\chi$ increasing convex function. We thank S. Boucksom and H. Guenancia for indicating this.
\begin{proof}
Let $\alpha_j$ be an increasing sequence of positive numbers which converges to $\alpha$. By assumption we have $e^{-\alpha_j \phi}$ is integrable on $X$.  We can assume that $\phi\leq 0$. We solve the complex Monge-Amp\`ere equation
$$
(\omega +dd^c \f_j)^n = e^{\f_j-\alpha_j\phi}\omega^n.
$$
For each $j$, since $e^{-\alpha_j \phi}$ belongs to $L^{p_j}$ for some $p_j>1$, it follows from the classical result of Ko{\l}odziej \cite{Kol98} that 
$\f_j$ is bounded. Moreover, the comparison principle reveals that 
$\f_j$ is non-increasing. Now, we need to bound $\f_j$ uniformly from below by some singular quasi-psh function. 

Let $1/2>\vep>0$ be a very small positive number. By assumption 
we know that 
$$
e^{(\vep-\alpha)\phi} \in L^p(X), \ \ p=p_{\vep}:=\frac{\alpha-\vep/2}{\alpha-\vep}>1.
$$
Set $\p:=\vep \phi\in \psh(X,\omega/2)$ and consider the function
$$
H(t):= \left[\Capis(\f_j<\p-t)\right]^{1/n}, \ \ t>0.
$$
It follows from \cite[Lemma 2.7]{DL1} that 
$\Capo\leq 2^n \Capa_{\psi}$. By a strong volume-capacity domination in \cite[Remark 5.10]{GZ05} we also have 
$$
\vol_{\omega} \leq \exp{\left(\frac{-C_1}{\Capo^{1/n}}\right)},
$$
where $C_1$ depends only on $(X,\omega)$. 
Fix $t>0, s\in [0,1]$. Using \cite[Proposition 2.8]{DL1} and H\"older inequality  we get
\begin{eqnarray*}
s^n \Capis(\{\varphi_j<\psi-t-s\})&\leq & \int_{\{\varphi_j<\p-t\}}\MA(\varphi_j) \\
&\leq & \int_{\{\varphi_j<\psi-t\}}e^{-\f_j} e^{\p} \MA(\f_j) \\
&\leq & \int_{\{\f_j<\p-t\}} e^{(\vep-\alpha)\phi} \omega^n\\
&\leq & \left[\int_X e^{(\vep/2-\alpha)\phi} \omega^n\right]^{1/p}
\left[\int_{\{\f_j<\psi-t\}} \omega^n \right]^{1/q}\\
&\leq & C_2 \left[\Capis(\{\f_j<\psi-t\})\right]^2,
\end{eqnarray*}
where $p= p_{\vep}:=\frac{\alpha-\vep/2}{\alpha-\vep}>1$ and $q>1$ is 
the exponent conjugate of $p$. The constant $C_2>0$ depends on $\vep$
and also on $\int_X e^{(\vep/2-\alpha)\phi} \omega^n$. We then get
$$ 
s H(t+s)\leq C_2^{1/n} H(t)^2,  \ \forall t>0, \forall s\in [0,1]. 
$$
Then by \cite[Lemma 2.4]{EGZ09} we get 
$$
\f_j \geq \vep \phi-C_{\vep},
$$
where $C_{\vep}$ only depends on $\vep$ and $\int_X e^{(\vep/2-\alpha)\phi} \omega^n$. Then we see that $\f_j$ decreases to 
$\f\in \psh(X,\omega)$ and $\f$ satisfies 
$$
\f\geq \vep \phi -C_{\vep}.
$$
Since $\vep$ is arbitrarily small we conclude that $\f$ has zero Lelong number everywhere on $X$. Finally, it follows from Fatou's lemma that $e^{\f-\alpha \phi}$ is integrable on $X$. \\
\indent We now show that $\f$ can be chosen to be in $\Ec(X,\omega)$, more precisely $\f=\chi\circ \phi$,
$$
\int_X e^{\chi\circ \phi -\alpha \phi} \omega^n <+\infty,
$$
for some $\chi: \R^-\rightarrow \R^-$ increasing convex function such that 
$\chi(-\infty)=-\infty$ and $\chi'(-\infty)=0$. Note that $\chi\circ \phi\in \Ec(X, \omega)$ thanks to \cite{CGZ07}. We are grateful to H. Guenancia for the following constructive proof.\\ 
We can always assume that $\phi \leq -1$. For each $k\in \N$ let 
\begin{equation}\label{eq: construction 1}
a_k:= \log \int_X e^{-(\alpha-2^{-k-1})\phi}\omega^n <+\infty .
\end{equation}
Define the sequence $(c_k)$ inductively by 
\begin{equation}\label{eq: construction 2}
c_1=a_1, \ c_{k+1}: = \max (c_k +4k , a_{k+1}), \ \forall k \geq 1 .
\end{equation}
Define another sequence $(t_k)$ by
\begin{equation}\label{eq: construction 3}
t_1:=1, \ t_{k+1}:= 2^{k+1}(c_{k+1}-c_k), \ \forall k\geq 1 .
\end{equation}
Define $\chi: (-\infty,-1]\rightarrow \R^-$ by 
$$
\chi(-t) := -2^{-k}t -c_k \ \ {\rm if}\ \ t\in [t_k, t_{k+1}], \ \forall 
k\geq 1 .
$$
It follows from (\ref{eq: construction 1}) that 
$$
e^{(\alpha -2^{-k-1})t}\ \vol (\phi<-t) \leq \int_X e^{-(\alpha -2^{-k-1})\phi}\omega^n \leq e^{c_k}.
$$
Thus using  (\ref{eq: construction 2}), (\ref{eq: construction 3}) and the above inequality  we get  
\begin{eqnarray*}
\int_X e^{\chi(\phi) -\alpha \phi} \omega^n &\leq 
& e^{\chi(-1)+\alpha} + 
\alpha \int_1^{+\infty} e^{\alpha t +\chi(-t)}\vol (\phi<-t) dt\\
&\leq & C + \alpha\sum_{k=1}^{+\infty} \int_{t_k}^{t_{k+1}} e^{\alpha t + \chi(-t)} \
\vol (\phi<-t) dt\\
&\leq & C+ \alpha\sum_{k=1}^{+\infty} \int_{t_k}^{t_{k+1}} e^{c_k+2^{-k-1}t-2^{-k}t-c_k} dt\\
&\leq & C+ \alpha\sum_{k=1}^{+\infty}\int_{t_k}^{t_{k+1}} e^{-2^{-k-1}t} dt\\
&\leq & C+ \alpha\sum_{k=1}^{+\infty}2^{k+1} e^{-2^{-k-1}t_k} \\
&\leq & C+ \alpha\sum_{k=1}^{+\infty} 2^{k+1} e^{-2^{-1}(c_k-c_{k-1})} \\
&\leq & C+ \alpha\sum_{k=1}^{+\infty} 2^{k+1} e^{-2(k-1)} \\
&\leq & C+ 4\alpha .
\end{eqnarray*}

\end{proof}

The above result is quite optimal as the following example shows:
\begin{ex}\label{example: critical optimal}
Let $(X,\omega)$ be a compact K\"ahler manifold and $D$ be a smooth complex hypersurface on $X$ defined by a holomorphic section $s$ such that $\vert s\vert \leq 1/e$.
Consider 
\begin{equation}\label{eq: critical int}
\phi = 2\log \vert s\vert -(-\log \vert s\vert)^{p}, \ \ p\in (0,1).
\end{equation}
By rescaling $\omega$ we can assume that $\phi\in \psh(X,\omega)$.
Then for any $q>0$ 
$$
\int_X \frac{e^{-\phi}}{(-\phi)^q}\omega^n =+\infty.
$$
\end{ex}

The example above has been given in \cite{ACK} in the case of one complex variable which is locally similar to our setting. Assume now that $\phi$
is given by (\ref{eq: critical int}). It follows from Theorem
\ref{thm: critical integrability} that we can find $\f\in \psh(X,\omega)$ having zero Lelong number everywhere such that 
$$
\int_X e^{\f-\phi} \omega^n <+\infty.
$$
In this concrete example one such function $\f$ can be given explicitly by
$$
\f = -(\log |s|)^p -(1+\vep)\log (\log |s|), \ \vep>0.
$$

\begin{proof}[Proof of Theorem D]
It follows from the above proof of Theorem \ref{thm: critical integrability} that there exists $u\in \Ec(X,\omega/2)$ such that $e^{u-\alpha\phi}$ is integrable. We then can argue as in Example \ref{ex: subsol semi} to find a subsolution which also yields a solution thanks to Theorem \ref{thm: subsol}. The uniqueness follows from the comparison principle (see \cite{BG13}).
\end{proof}

\end{document}